\title[Characterizations of Steiner and linear isometries operads]{Characterizations of equivariant Steiner and linear isometries operads}
\author{Jonathan Rubin}
\address{University of California Los Angeles,
Los Angeles, CA 90095}
\email{jrubin@math.ucla.edu}
\subjclass[2010]{Primary: 55P91}
\date{\today}
\renewcommand{\c}[1]{\mathcal{#1}}
\renewcommand{\t}[1]{\textnormal{#1}}
\renewcommand{\b}[1]{\mathbf{#1}}
\newcommand{\bb}[1]{\mathbb{#1}}
\newcommand{\vp}{\varphi}
\newcommand{\la}{\langle}
\newcommand{\ra}{\rangle}
\newcommand{\ol}[1]{\overline{#1}}
\newcommand{\ub}[1]{\underline{\mathbf{#1}}}
\newcommand{\s}[1]{\mathscr{#1}}
\newcommand{\cpa}{
	% [inline block 0: 88 envs, 50039 chars -> data_tex | \begin{tikzpicture}[scale=0.15,baseline=0.2mm] 		\node(0) at (0,0) {$\cdot$};...]

}
\theoremstyle{plain}
\newtheorem{lem}{Lemma}[section]
\newtheorem{thm}[lem]{Theorem}
\newtheorem*{thm*}{Theorem}
\newtheorem{cor}[lem]{Corollary}
\newtheorem{prop}[lem]{Proposition}
\newtheorem*{thmDUab}{Theorem \protect{\ref{thm:discfinab}}}
\newtheorem*{thmLU}{Theorem \protect{\ref{thm:LUCpn}} and Theorem \protect{\ref{thm:Cpqlinisom}}}
\theoremstyle{remark}
\newtheorem{rem}[lem]{Remark}
\theoremstyle{definition}
\newtheorem{ex}[lem]{Example}
\newtheorem{defn}[lem]{Definition}
\newtheorem{nota}[lem]{Notation}
\newtheorem{constr}[lem]{Construction}
\newtheorem{prob}[lem]{Problem}
\newtheorem{conv}[lem]{Convention}
\begin{document}
\maketitle

\begin{abstract} We study the indexing systems that correspond to equivariant Steiner and linear isometries operads. When $G$ is a finite abelian group, we prove that a $G$-indexing system is realized by a Steiner operad if and only if it is generated by cyclic $G$-orbits. When $G$ is a finite cyclic group, whose order is either a prime power or a product of two distinct primes greater than $3$, we prove that a $G$-indexing system is realized by a linear isometries operad if and only if it satisfies Blumberg and Hill's horn-filling condition.

We also repackage the data in an indexing system as a certain kind of partial order. We call these posets transfer systems, and we develop basic tools for computing with them.
\end{abstract}

\tableofcontents

\section{Introduction}

A $N_\infty$ operad is a representing object for homotopy commutative monoids, equipped with additional equivariant transfer maps. Though there are many different point-set models of $N_\infty$ operads, the homotopy theory of $N_\infty$ operads is much simpler, because it is essentially algebraic. This is true in at least two senses. On the one hand, the classification theorem states that for any finite group $G$, the homotopy category $\t{Ho}(N_\infty\t{-}\b{Op})$ of $N_\infty$ $G$-operads is equivalent to the poset $\b{Ind}$ of all $G$-indexing systems (cf. \cite{BH}, \cite{BonPer}, \cite{GutWhite}, \cite{Rubin}). Here, the indexing system associated to a $N_\infty$ operad is an algebraic object that encodes the additional transfers. On the other hand, one can also model the entire homotopy theory of $N_\infty$ $G$-operads with discrete operads in the category of $G$-sets (cf. \cite[Theorem 3.6]{Rubin}). It follows that all homotopical constructions on $N_\infty$ operads can be performed in pure combinatorics, and then transported into topology after the fact.

That being said, this point of view completely ignores the naturally occurring geometry. One of the initial motivations behind the study of $N_\infty$ operads was a desire to understand additive and multiplicative structures on $G$-spectra over various universes $U$. Such operations are naturally parametrized by Steiner operads $\c{K}(U)$ and linear isometries operads $\c{L}(U)$, and since these operads are so canonical, Blumberg and Hill asked ``whether or not all homotopy types in $N_\infty\t{-}\b{Op}$ are realized by the operads that ``arise in nature'', i.e., the equivariant Steiner and linear isometries operads'' \cite[p. 22]{BH}. In this paper, we investigate this problem and some of its extensions.

The answer to Blumberg and Hill's question depends on the ambient group, but it is no in most cases. Given a prime $p$, the answer is yes for the cyclic groups $C_p$ and $C_{p^2}$, but it is no for $C_{p^n}$ when $n \geq 3$, and it is no for $(C_p)^{\times n}$ when $n \geq 2$. Given distinct primes $p$ and $q$, it is yes for $C_{pq}$ provided that $p, q > 3$, but no otherwise. In general, if $G$ contains a tower $1 \subsetneq L \subsetneq H \subsetneq G$, or if $G$ is a non-cyclic finite abelian group, then there is at least one $N_\infty$ $G$-operad that is inequivalent to every Steiner and linear isometries operad (Theorems \ref{thm:notSteinerorlinisom} and \ref{thm:notLorD}). There are often many more. For example, only 9 of the 19 homotopy types of $N_\infty$ $K_4$-operads correspond to Steiner and linear isometries operads. We fare no better in the non-abelian case. Only $5$ of the $9$ $\Sigma_3$-homotopy types and only $22$ of the $68$ $Q_8$-homotopy types correspond to such operads.

Thus, we refine Blumberg and Hill's question. For any given group $G$, we ask which properties characterize the Steiner and linear isometries $G$-operads among all $N_\infty$ $G$-operads. In light of the equivalence $\t{Ho}(N_\infty\t{-}\b{Op}) \simeq \b{Ind}$, we seek algebraic properties that detect when a given $G$-indexing system $\c{I} \in \b{Ind}$ corresponds to some $\c{K}(U)$ or $\c{L}(U)$. A first observation\footnote{Reproduced in this paper as Proposition \ref{prop:BHcond}.} is that every $\c{I}$ obtained from a linear isometries operad satisfies the condition below \cite[p. 17]{BH}.
	\begin{equation*}\tag{$\Lambda$}
	\t{If $K \subset L \subset H \subset G$ and $H/K \in \c{I}$, then $L/K \in \c{I}$ and $H/L \in \c{I}$.}
	\end{equation*}
We are after further conditions that encode the peculiarities of Steiner and linear isometries operads.

The homotopy types of Steiner and linear isometries $G$-operads are determined by the representation theory of $G$ over the reals, but the translation to the algebra of indexing systems is surprisingly bad. The collection $\b{Uni}$ of all isoclasses of $G$-universes forms a cube, the poset category $\b{Ind}$ is a lattice, and we obtain two natural functions $\b{Uni} \rightrightarrows \b{Ind}$ by sending a $G$-universe $U$ to the indexing systems corresponding to $\c{K}(U)$ and $\c{L}(U)$. \emph{Neither of these functions are lattice maps in general} (Proposition \ref{prop:notlattice}). Thus, we eschew a top down approach in favor of a more direct attack. We elaborate on Blumberg and Hill's computations, and then we look for patterns after the fact. We prove the following.

\begin{thmDUab} Let $G$ be a finite abelian group and let $\c{I}$ be a $G$-indexing system. Then $\c{I}$ corresponds to a $G$-Steiner operad if and only if $\c{I}$ is generated by a set of cyclic $G$-orbits.
\end{thmDUab}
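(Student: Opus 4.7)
The plan is to translate the problem into representation-theoretic combinatorics and then to generate the desired admissible orbits via the closure operations of an indexing system, viewed as a transfer system under the equivalence developed earlier in the paper. Recall that an $H$-orbit $H/K$ is admissible for the Steiner operad $\mathcal{K}(U)$ precisely when $H/K$ embeds $H$-equivariantly into $i_H^* U$, equivalently when some vector $v \in i_H^* U$ has stabilizer exactly $K$. Decomposing $i_H^* U$ into irreducible $H$-summands $\bigoplus_\beta W_\beta$ and writing $v = \sum v_\beta$, one sees that $\mathrm{Stab}(v) = \bigcap_{\beta : v_\beta \neq 0} \ker W_\beta$. Because $G$ is abelian, every irreducible real representation of any subgroup factors through a faithful representation of a cyclic quotient, so each such kernel $N$ satisfies that the corresponding quotient is cyclic. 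Moreover, each $W_\beta$ arises as a summand of some $V_j|_H$ with $V_j$ an irreducible $G$-subrepresentation of $U$, and an easy check shows $\ker W_\beta = H \cap \ker V_j$.

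For the forward direction, let $\mathcal{I}$ be the indexing system of $\mathcal{K}(U)$, enumerate the irreducible $G$-subrepresentations $\{V_j\}$ of $U$, and set $N_j = \ker V_j$, so each $G/N_j$ is cyclic. The analysis above shows every admissible $H/K$ has the form $K = H \cap \bigcap_{j \in S} N_j$ for some finite index set $S$. It therefore suffices to produce all such orbits inside the indexing system $\langle G/N_j \rangle$ generated by the cyclic $G$-orbits $\{G/N_j\}$. Working at level $G$ first, from $G/N_1, G/N_2 \in \langle G/N_j \rangle$ the restriction axiom applied to $G/N_2$ and the subgroup $N_1 \leq G$ yields $N_1/(N_1 \cap N_2) \in \langle G/N_j \rangle$, and transitivity (i.e.\ the self-induction rule for transfer systems) applied along $N_1 \cap N_2 \leq N_1 \leq G$ then gives $G/(N_1 \cap N_2) \in \langle G/N_j \rangle$. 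Iterating this construction produces $G/\bigcap_{j \in S} N_j$ for any finite $S$, and one additional restriction to $H$ yields the summand $H/(H \cap \bigcap_{j \in S} N_j) = H/K$.

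For the backward direction, assume $\mathcal{I}$ is generated by a family of cyclic $G$-orbits $\{G/N_j\}_j$. For each $j$, choose a faithful real irreducible representation of the cyclic group $G/N_j$; inflating along the quotient $G \twoheadrightarrow G/N_j$ produces an irreducible $G$-representation $V_j$ with $\ker V_j = N_j$. Set $U := \mathbb{R}^\infty \oplus \bigoplus_j V_j^\infty$. The irreducible $G$-subrepresentations of $U$ are precisely the $V_j$ and the trivial representation, so applying the forward direction to this particular $U$ shows that the indexing system of $\mathcal{K}(U)$ is generated by $\{G/N_j\}$, hence equals $\mathcal{I}$.

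The technical heart of the argument is the inductive generation used in the forward direction: one has to exploit the transfer-system axioms (reflexivity, transitivity, and restriction) to build every intersection $G/\bigcap_{j \in S} N_j$ from the cyclic generators before passing to an arbitrary subgroup via one final restriction. A minor but essential subtlety is that the restriction of a 2-dimensional $G$-irrep $V_j$ to a subgroup $H$ may split into several $H$-irreps (for instance, when $H/(H\cap N_j)$ has order $\leq 2$), but each resulting summand $W_\beta$ still satisfies $\ker W_\beta = H \cap N_j$. This ensures that the stabilizer computation in $i_H^* U$ remains clean and that no admissible orbits appear beyond those captured by the intersection pattern of the chosen cyclic generators.
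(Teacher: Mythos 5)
Your proof is correct and follows essentially the same route as the paper's: the identical key facts --- that irreducible real representations of a finite abelian group have cocyclic kernels and act freely away from the origin, that stabilizers of vectors in a universe are intersections $H\cap\bigcap_{j\in S}N_j$ of such kernels, and that these orbits are generated from the $G$-level cyclic orbits by restriction and self-induction --- are exactly what the paper packages as Lemma \ref{lem:discGorb}, Proposition \ref{prop:discind}, and Lemma \ref{lem:transysint} before specializing to the abelian case in Theorem \ref{thm:discfinab}. The only point you should state explicitly (it is immediate from your stabilizer formula applied to a nonzero vector lying in a single $V_j$) is that each $G/N_j$ is itself admissible for $\mathcal{K}(U)$, which is needed in both directions to upgrade the containment $A(\mathcal{K}(U))\subseteq\langle G/N_j\rangle$ to the equality you assert.
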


The key point is that $\c{K}(U)$ may be analyzed one irreducible subrepresentation $V \subset U$ at a time. We have no such luck for $\c{L}(U)$, so we specialize $G$ further.

\begin{thmLU} Let $G$ be a finite cyclic group, whose order is either a prime power or a product of two distinct primes greater than $3$. If $\c{I}$ is a $G$-indexing system, then $\c{I}$ corresponds to a $G$-linear isometries operad if and only if $\c{I}$ satisfies condition $(\Lambda)$.
\end{thmLU}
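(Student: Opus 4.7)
The forward implication is Proposition \ref{prop:BHcond}, so the work is to show that every indexing system $\c{I}$ satisfying $(\Lambda)$ arises as $\c{I}(\c{L}(U))$ for some $G$-universe $U$. My plan is to enumerate both sides and match them up, treating the two classes of groups in parallel but separately.

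First, I would apply the transfer system repackaging from earlier in the paper to list the indexing systems satisfying $(\Lambda)$ on the subgroup lattice of $G$. For $G = C_{p^n}$ this lattice is the chain $1 \subsetneq C_p \subsetneq \cdots \subsetneq C_{p^n}$, and $(\Lambda)$ translates into a downward-closure condition on the covering relations of the transfer system, which admits a tidy combinatorial description and count. For $G = C_{pq}$ the lattice is a diamond, and one can list the $(\Lambda)$-closed transfer systems directly.

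Next, for each $G$-universe $U$ I would compute $\c{I}(\c{L}(U))$. The relevant datum is whether $\c{L}(U)(G/K)^H$ is nonempty, i.e., whether there is an $H$-equivariant linear isometric embedding $\b{R}[G/K] \hookrightarrow U$. Decomposing the source and target into isotypic components under the $G$-action reduces this question to a matching problem between irreducible real summands with multiplicities (finite or countably infinite). For $G = C_{p^n}$ the real irreducibles are indexed by divisors of $|G|$, while for $G = C_{pq}$ they fall into four families; in either case one can write $\c{I}(\c{L}(U))$ as an explicit function of the isotypic-multiplicity pattern of $U$.

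The main obstacle is that, in contrast to the Steiner operad handled in Theorem \ref{thm:discfinab}, the space $\c{L}(U)$ does not split as a product indexed by the irreducible summands of $U$, so one cannot attack the converse one irreducible at a time. Instead, for each transfer system satisfying $(\Lambda)$ I would construct a universe $U$ whose global pattern of infinite-multiplicity isotypic components realizes that transfer system all at once, and check this realization against the formula from the previous step. The prime-power case is cleaner because the chain lattice leaves few degrees of freedom, whereas the $C_{pq}$ case is where the hypothesis $p, q > 3$ enters: for small primes the supply of real irreducibles and the identifications among regular representations of subquotients become too coarse to distinguish certain $(\Lambda)$-closed transfer systems, and showing that this coarseness is the only obstruction once $p, q > 3$ is the heart of the argument.
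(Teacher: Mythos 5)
Your overall strategy---recast everything as transfer systems, enumerate the saturated ones, and build a universe realizing each---is indeed the route the paper takes, and your intuition that $p,q>3$ is needed because of a shortage of irreducible real representations for small primes is on the mark. However, the criterion you use for when $\c{L}(U)$ admits a transfer is wrong, and the error is structural rather than cosmetic. You assert that admissibility of $H/K$ amounts to an $H$-equivariant embedding of the fixed representation $\bb{R}[G/K]$ into $U$; the correct criterion, from \cite[Theorem~4.18]{BH} and used throughout \S\ref{sec:linisom}, is the existence of an $H$-embedding $\t{ind}_K^H \t{res}^G_K U \hookrightarrow \t{res}^G_H U$, in which \emph{both} sides depend on $U$. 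With your version the left side would be a fixed object and $A_{\c{L}}$ would automatically be order-preserving on $\b{Uni}(G)$---contradicting Proposition~\ref{prop:notlattice} and Example~\ref{ex:C9notlat}. Worse, your criterion can be tested one irreducible summand of $U$ at a time, which is precisely the manoeuvre you (correctly) observe earlier is unavailable for $\c{L}(U)$; the proposal is internally inconsistent at exactly the point where the real difficulty lives.

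Once the correct criterion is in hand, the paper's workhorse is Proposition~\ref{prop:admCnlinisom}: writing $U = \bigoplus_{i\in I}\lambda_n(i)^\infty$ for $I\subset\bb{Z}/n$ closed under negation, one has $C_d\to_{\c{L}(U)} C_e$ precisely when $(I\bmod e)+d=(I\bmod e)$. This converts the problem into combinatorics of translation-invariant subsets of $\bb{Z}/n$. For $C_{p^n}$, given a saturated transfer system with irreducible generators $(C_{p^{k_i}},C_{p^{k_i+1}})$ for $k_1<\cdots<k_m$ (Proposition~\ref{prop:simporbLambda}), the paper takes $I=\{\pm(a_1p^{k_1}+\cdots+a_mp^{k_m})\,:\,0\le a_j<p\}$ and verifies the invariance pattern directly. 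For $C_{pq}$ the index sets are produced case by case in Lemmas~\ref{lem:Cpqlinisom1} and~\ref{lem:Cpqlinisom2}; the hypothesis $p,q>3$ is used because $I$ must be closed under negation (coming from $\lambda_n(m)\cong\lambda_n(-m)$), and when $p\le 3$ that forced symmetry eats the slack needed to avoid spurious translation invariance. You should rebuild your enumeration around this translation-invariance condition rather than around the embedding criterion you stated.
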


Blumberg and Hill already noted the necessity of condition $(\Lambda)$, and we prove sufficiency by direct construction. Note that condition $(\Lambda)$ is not sufficient when $G = C_{2q}$ or $C_{3q}$, because there are not enough $G$-representations in these cases.

Theorems \ref{thm:discfinab}, \ref{thm:LUCpn}, and \ref{thm:Cpqlinisom} are stated in \S\S\ref{sec:Steiner}--\ref{sec:linisom} using different, but logically equivalent formalism. Briefly, an indexing system $\c{I}$ is completely determined by the orbits it contains, and with a bit of thought, one can also recast all structure in $\c{I}$ in terms of orbits. We call the result a \emph{transfer system} (Definition \ref{defn:transys}). The switch to transfer systems makes many of our computations easier, and it also streamlines our notation. The notion of a transfer system was also discovered in independent work of Balchin, Barnes, and Roitzheim \cite{BBR}. They use transfer system formalism in their beautiful proof that $\b{Ind}(C_{p^n})$ is isomorphic to the $(n+1)$st Stasheff polytope. We are confident that transfer systems will have further uses.

The remainder of this paper is organized as follows.
In \S\ref{sec:overview}, we review some background material and give a more leisurely introduction to the characterization problem.
In \S\ref{sec:transys}, we introduce transfer systems. We prove that they are equivalent to indexing systems (Theorem \ref{thm:transys}), and then we give a few examples (Figures \ref{fig:Cpn}--\ref{fig:S3}).
From here, we turn to the characterization problem. In \S\ref{sec:Steiner}, we analyze Steiner operads, starting with general finite groups (Theorem \ref{thm:discind}), and then specializing to finite abelian groups (Theorem \ref{thm:discfinab}). In \S\ref{sec:linisom}, we do the same for linear isometries operads. There is not much we can say in general, so we quickly specialize to finite cyclic groups $C_N$, and then to $C_{p^n}$ and $C_{pq}$ (Theorems \ref{thm:LUCpn} and \ref{thm:Cpqlinisom}). Appendix \ref{sec:filtTr} describes a general method for filtering the lattice $\b{Tr}(G)$ of all $G$-transfer systems (Construction \ref{constr:filTr}). These lattices can be quite large, even for small groups $G$ (cf. Figure \ref{fig:Q8}, p. \pageref{fig:Q8}), and our filtration helps organize the data. Appendix \ref{sec:genindsys} explains how to compute the transfer system generated from a prescribed set of relations (Construction \ref{constr:transysgen}), and then examines a few interesting cases.

\begin{conv}\label{conv:Glambdasigma} Throughout this paper, $G$ denotes a finite group with unit $e$. When $G = C_n$, we write $\lambda(k) = \lambda_n(k) : C_n \to SO(2) \cong S^1$ for the two-dimensional real representation of $C_n$ that sends a chosen generator $g \in C_n$ to $e^{2 \pi i k / n}$. When $G$ is non-cyclic, we use $\lambda$ to denote the pullback of such a representation along a quotient $G  \twoheadrightarrow C_n$. We write $\sigma$ for the sign representation of $C_2$ and its pullbacks.
\end{conv}

\subsection*{Acknowledgements} We thank Mike Hill for sharing many of the ideas behind this work, and for countless hours of conversation. We also thank Peter May for helpful comments on an earlier draft of this paper. This research was supported by NSF Grant DMS--1803426.

\section{The characterization problem} \label{sec:overview}

In this section, we describe the characterization problem (Problem \ref{prob:image}) and indicate some obstacles towards its solution (Proposition \ref{prop:notlattice}). The passage from the real representation theory of a group $G$ to the algebra of $G$-indexing systems is less transparent than one might hope, and this is why we take such a hands-on approach in later sections.

\subsection{Overview}

We begin by reviewing the basic theory of $N_\infty$ operads, with an eye towards Steiner and linear isometries operads. For further discussion, we recommend \cite{BH} and \cite{GM}.

Let $G$ be a finite group and let $\b{Top}^G$ be the category of left $G$-spaces. Throughout this paper, we understand $G$-operads to be symmetric operads in $\b{Top}^G$ with respect to the cartesian product. The prototypical example is the endomorphism $G$-operad for a $G$-space $X$. Its $n$th level is the $G$-space $\b{Top}_G(X^{\times n},X)$ of all continuous, but not necessarily equivariant, maps $X^{\times n} \to X$. The group $G$ acts by conjugation. Little discs operads constitute another important class of examples. Suppose $V$ is a finite dimensional real $G$-representation and let $D(V)$ denote the unit disc in $V$. A little $V$-disc is an affine, but not necessarily equivariant, map of the form $av + b : D(V) \to D(V)$. The $n$th level of the little $V$-discs operad $\c{D}(V)$ is the space of all $n$-tuples of disjoint little $V$-discs.

A $N_\infty$ $G$-operad is a $G$-operad $\s{O}$ that has the following three properties:
	\begin{enumerate}
		\item{}the $G \times \Sigma_n$-space $\s{O}(n)$ is $\Sigma_n$-free for every $n \geq 0$,
		\item{}the fixed-point subspace $\s{O}(n)^{\Gamma}$ is either empty or contractible for every $n \geq 0$ and subgroup $\Gamma \subset G \times \Sigma_n$, and
		\item{}the fixed-point subspace $\s{O}(n)^G$ is nonempty for every $n \geq 0$.
	\end{enumerate}
Such operads parametrize the multiplicative structures that typically remain on localizations of genuine commutative ring $G$-spectra. These operads also parametrize the canonical additive and multiplicative structures on spectra over incomplete universes. Recall that a $G$-universe is a countably infinite dimensional real $G$-inner product space, which contains each of its subrepresentations infinitely often, and which contains trivial summands. For any $G$-universe $U$, the natural multiplication on spectra indexed over $U$ is parametrized by the $N_\infty$ linear isometries operad $\c{L}(U)$. Its $n$th level is the space of all linear, but not necessarily equivariant, isometries $U^{\oplus n} \hookrightarrow U$. One would like to say that the natural additive structure is parametrized by the $N_\infty$ operad $\c{D}(U) = \t{colim}_{V \subset U} \c{D}(V)$, where $V$ ranges over all finite dimensional subrepresentations of $U$. Unfortunately, the operad $\c{D}(U)$ does not naturally act on equivariant infinite loop spaces, because the point-set level colimit that defines $\c{D}(U)$ is not compatible with suspension.

The standard workaround is to use Steiner operads $\c{K}(U)$ instead. Suppose $V$ is a  finite dimensional real $G$-representation and let $R_V$ be the $G$-space of distance-reducing, but not necessarily equivariant, embeddings $V \hookrightarrow V$. A $V$-Steiner path is a map $h : [0,1] \to R_V$ such that $h(1) = \t{id}$, and the $n$th level of the Steiner operad $\c{K}(V)$ is the space of all $n$-tuples $(h_1,\dots,h_n)$ of $V$-Steiner paths such that the images of $h_1(0)$, \dots, $h_n(0)$ are disjoint. For any $G$-universe $U$, we let $\c{K}(U) = \t{colim}_{V \subset U} \c{K}(V)$. These Steiner operads do act on equivariant infinite loop spaces.

We declare a map $\vp : \s{O}_1 \to \s{O}_2$ between $N_\infty$ operads to be a \emph{weak equivalence} if $\vp : \s{O}_1(n)^\Gamma \to \s{O}_2(n)^\Gamma$ is a weak homotopy equivalence of topological spaces for every $n \geq 0$ and subgroup $\Gamma \subset G \times \Sigma_n$. Under mild point-set level conditions, a weak equivalence between $N_\infty$ operads induces a Quillen equivalence between the associated model categories of algebra $G$-spectra \cite[Theorem A.3]{BH}. The Steiner operad $\c{K}(U)$ is equivalent to the infinite little discs operad $\c{D}(U)$, but there are generally universes $U$ such that $\c{K}(U)$ and $\c{L}(U)$ are inequivalent \cite[Theorem 4.22]{BH}. 

By the usual product trick \cite[Proposition 3.10]{MayGILS}, the homotopy type of a $N_\infty$ $G$-operad $\s{O}$ is completely determined by the subgroups $\Gamma \subset G \times \Sigma_n$ such that $\s{O}(n)^{\Gamma}$ is nonempty. Moreover, the set of such $\Gamma$ must be closed under subconjugacy, and must satisfy additional closure conditions that encode operadic composition. It is convenient to phrase these conditions in coordinate-free terms. By $\Sigma$-freeness, the relevant subgroups $\Gamma \subset G \times \Sigma_n$ all intersect $\{e\} \times \Sigma_n$ trivially. Such subgroups are typically called \emph{graph subgroups}. Every graph subgroup $\Gamma \subset G \times \Sigma_n$ is the graph of a permutation representation $\sigma : H \to \Sigma_n$ of some subgroup $H \subset G$. Conversely, if $T$ is a finite $H$-set, then the graph of a permutation representation of $T$ is a graph subgroup $\Gamma(T) \subset G \times \Sigma_{\abs{T}}$. We say that a $N_\infty$ operad $\s{O}$ \emph{admits} $T$ if $\s{O}(\abs{T})^{\Gamma(T)}$ is nonempty. The (graded) class of all admissible sets of a $N_\infty$ operad forms an \emph{indexing system} in the sense below.

\begin{defn}\label{defn:indsys} Let $G$ be a finite group and let $\b{Sub}(G)$ denote the set of all subgroups of $G$. A \emph{class of finite $G$-subgroup actions} is a class $\c{X}$, equipped with a function $\c{X} \to \b{Sub}(G)$, such that the fiber over each $H \in \b{Sub}(G)$ is a class of finite $H$-sets. Write $\c{X}(H)$ for the fiber over $H$.

A \emph{$G$-indexing system} $\c{I}$ is a class of finite $G$-subgroup actions which satisfies the following closure conditions:
	\begin{enumerate}
		\item{}(trivial sets) For any subgroup $H \subset G$,	the class $\c{I}(H)$ contains all finite, trivial $H$-actions.
		\item{}(isomorphism) For any subgroup $H \subset G$ and finite $H$-sets $S$ and $T$, if $S \in \c{I}(H)$ and $S \cong T$, then $T \in \c{I}(H)$.
		\item{}(restriction) For any subgroups $K \subset H \subset G$ and finite $H$-set $T$, if $T \in \c{I}(H)$, then $\t{res}^H_K T \in \c{I}(K)$.
		\item{}(conjugation) For any subgroup $H \subset G$, group element $g \in G$, and finite $H$-set $T$, if $T \in \c{I}(H)$, then $c_g T \in \c{I}(gHg^{-1})$.
		\item{}(subobjects) For any subgroup $H \subset G$ and finite $H$-sets $S$ and $T$, if $T \in \c{I}(H)$ and $S \subset T$, then $S \in \c{I}(H)$.
		\item{}(coproducts) For any subgroup $H \subset G$ and finite $H$-sets $S$ and $T$, if $S \in \c{I}(H)$ and $T \in \c{I}(H)$, then $S \sqcup T \in \c{I}(H)$.
		\item{}(self-induction) For any subgroups $K \subset H \subset G$ and finite $K$-set $T$, if $T \in \c{I}(K)$ and $H/K \in \c{I}(H)$, then $\t{ind}_K^H T \in \c{I}(H)$.
	\end{enumerate}
We call the elements of $\c{I}(H)$ the \emph{admissible $H$-sets of $\c{I}$}. Let $\b{Ind} = \b{Ind}(G)$ denote the poset of all $G$-indexing systems, ordered under inclusion.
\end{defn}

For any group $G$, there is a maximum indexing system $\ub{Set}$, whose $H$-fiber is the class of all finite $H$-sets, and there is a minimum indexing system $\ub{triv}$, whose $H$-fiber is the class of all finite, trivial $H$-actions. The meet of two indexing systems $\c{I}$ and $\c{J}$ is the levelwise intersection $(\c{I} \land \c{J})(H) = \c{I}(H) \cap \c{J}(H)$, and the join of two indexing systems is the smallest indexing system that contains the levelwise union $(\c{I} \cup \c{J})(H) = \c{I}(H) \cup \c{J}(H)$.  Thus $\b{Ind}$ is a lattice. It is finite because indexing systems are determined by the orbits they contain.

\begin{defn}A $G$-indexing system $\c{I}$ is a \emph{$\Lambda$-indexing system} if it also satisfies
	\begin{itemize}
		\item[$(\Lambda)$] For any subgroups $K \subset L \subset H \subset G$, if $H/K \in \c{I}(H)$, then $L/K \in \c{I}(L)$ and $H/L \in \c{I}(H)$.
	\end{itemize}
\end{defn}

If $\c{I}$ is any indexing system and $H/K \in \c{I}(H)$, then $L/K \in \c{I}(L)$ because $\c{I}$ is closed under restriction and subobjects. The extra condition for $\Lambda$-indexing systems is that $H/L \in \c{I}(H)$. The class of admissible sets of a linear isometries operad is always a $\Lambda$-indexing system \cite[p. 17]{BH}.

\begin{rem}Condition $(\Lambda)$ is a kind of horn-filling property. Suppose that we have a chain of subgroups $H_0 \subset H_1 \subset H_2 \subset \cdots \subset H_n$, regarded as a $n$-simplex in $\b{Sub}(G)$. If the orbit $H_n/H_0$ is admissible for a $\Lambda$-indexing system $\c{I}$, then every suborbit $H_i/H_j$ with $i \geq j$ must also be admissible for $\c{I}$.
\end{rem}

Taking admissible sets defines a functor $A : N_\infty\t{-}\b{Op} \to \b{Ind}$ from the category of $N_\infty$ $G$-operads to the poset category $\b{Ind}$ of $G$-indexing systems. The classification theorem states that $A$ induces an equivalence after we invert weak equivalences.

\begin{thm}[\cite{BH}, \cite{BonPer}, \cite{GutWhite}, \cite{Rubin}]\label{thm:classNinfty} Taking admissible sets induces an equivalence $A : \t{Ho}(N_\infty\t{-}\b{Op}) \to \b{Ind}$ of $1$-categories.
\end{thm}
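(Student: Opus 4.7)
The plan is to verify that the induced functor on homotopy categories is essentially surjective, full, and faithful, exploiting the fact that $\b{Ind}$ is a poset so that no higher coherence data must be tracked.

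\textbf{Well-definedness and faithfulness.} A weak equivalence $\vp : \s{O}_1 \to \s{O}_2$ induces $\Gamma$-fixed-point weak equivalences at every level, so in particular $\s{O}_1(n)^\Gamma$ is nonempty if and only if $\s{O}_2(n)^\Gamma$ is; hence $A(\s{O}_1) = A(\s{O}_2)$, and $A$ descends to $\t{Ho}(N_\infty\t{-}\b{Op})$. Faithfulness is automatic, since $\b{Ind}$ is a poset.

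\textbf{Fullness via the product trick.} Given $A(\s{O}_1) \subseteq A(\s{O}_2)$, I would form the componentwise product $\s{O}_1 \times \s{O}_2$ with diagonal $G \times \Sigma_n$-action at each level $n$. Since $(X \times Y)^\Gamma = X^\Gamma \times Y^\Gamma$ and a product of two nonempty contractible spaces is contractible, $\s{O}_1 \times \s{O}_2$ is itself an $N_\infty$ operad, with $A(\s{O}_1 \times \s{O}_2) = A(\s{O}_1) \cap A(\s{O}_2) = A(\s{O}_1)$. Thus the projection $\s{O}_1 \times \s{O}_2 \to \s{O}_1$ is a weak equivalence, and the zig-zag $\s{O}_1 \xleftarrow{\sim} \s{O}_1 \times \s{O}_2 \to \s{O}_2$ produces a morphism $\s{O}_1 \to \s{O}_2$ in the homotopy category. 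Conversely, any zig-zag in $\t{Ho}$ clearly forces $A(\s{O}_1) \subseteq A(\s{O}_2)$.

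\textbf{Essential surjectivity.} For each $\c{I} \in \b{Ind}$, I must construct an $N_\infty$ operad $\s{O}_\c{I}$ with $A(\s{O}_\c{I}) = \c{I}$. The plan is: for each $n \geq 0$, let $\c{F}_n(\c{I})$ be the family of graph subgroups $\Gamma(T) \subset G \times \Sigma_n$ arising from admissible $H$-sets $T \in \c{I}(H)$ of cardinality $n$, and set $\s{O}_\c{I}(n)$ to be a $G \times \Sigma_n$-CW model of the universal space $E\c{F}_n(\c{I})$. Then $\s{O}_\c{I}(n)^\Gamma$ is nonempty (and contractible) exactly when $\Gamma \in \c{F}_n(\c{I})$, so one has $A(\s{O}_\c{I}) = \c{I}$ provided the levels assemble into an operad; this is where the closure axioms on $\c{I}$ come in.

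\textbf{The main obstacle.} The crux is verifying, in essential surjectivity, that the seven algebraic axioms of Definition \ref{defn:indsys} translate exactly into the condition that the families $\{\c{F}_n(\c{I})\}_n$ are closed under the operadic composition maps $\circ_i$ and the symmetric group actions. Self-induction is the subtlest, because it corresponds to the operadic composition $\gamma : \s{O}(n) \times \s{O}(k_1) \times \cdots \times \s{O}(k_n) \to \s{O}(k_1 + \cdots + k_n)$: one must check that the graph subgroup of a nested permutation representation of $\t{ind}_K^H T$ is assembled via a wreath product from the graphs of $H/K$ and $T$, so that admissibility of $H/K$ and $T$ implies admissibility of the composite. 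Once this dictionary between the closure axioms of indexing systems and the combinatorics of graph subgroups under operadic composition is established (the content of \cite{BonPer}, \cite{GutWhite}, and \cite{Rubin}), the remaining verifications are routine.
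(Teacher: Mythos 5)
First, note that the paper does not prove Theorem \ref{thm:classNinfty} at all: it is imported from the literature, with full faithfulness attributed to \cite{BH} and essential surjectivity to \cite{BonPer}, \cite{GutWhite}, and \cite{Rubin}. So there is no internal argument to compare against; what you have written is an outline of the standard strategy, and its easy parts (weak equivalences preserve admissible sets, any map of operads forces containment of admissibles, the product trick $\s{O}_1 \leftarrow \s{O}_1 \times \s{O}_2 \to \s{O}_2$ giving fullness) are correct and are exactly how \cite{BH} argues.

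There are, however, two genuine gaps. (1) Faithfulness is not ``automatic because $\b{Ind}$ is a poset.'' Since the target hom-sets have at most one element, faithfulness is precisely the assertion that $\t{Ho}(N_\infty\t{-}\b{Op})$ is itself a preorder, i.e.\ that any two zig-zags between the same pair of $N_\infty$ operads become equal after inverting weak equivalences. That is a substantive statement about derived mapping spaces (it is the hard half of \cite[Theorem 3.24]{BH}); your roof construction produces a morphism when $A(\s{O}_1)\subseteq A(\s{O}_2)$ but says nothing about its uniqueness in the localization. (2) For essential surjectivity, setting $\s{O}_{\c{I}}(n) = E\c{F}_n(\c{I})$ levelwise produces spaces with the correct fixed points but no operad structure: universal spaces are only defined up to equivariant homotopy, and there are no preferred structure maps, let alone strictly associative, unital, $(G\times\Sigma)$-equivariant ones. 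Your ``main obstacle'' paragraph only checks that the families $\c{F}_n(\c{I})$ are closed under the would-be compositions (restriction/conjugation giving closure under subconjugacy, coproducts and self-induction matching $\gamma$); this is necessary but not sufficient, and supplying an actual point-set model with these homotopy types is the entire content of the realization papers (e.g.\ free discrete operads with cells attached in \cite{Rubin}, the constructions of \cite{BonPer} and \cite{GutWhite}). As written, the proposal defers exactly the two steps that make the theorem a theorem.
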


In their pioneering work, Blumberg and Hill proved that $A : \t{Ho}(N_\infty\t{-}\b{Op}) \to \b{Ind}$ is full and faithful \cite[Theorem 3.24]{BH}, and subsequent, independent work in \cite{BonPer}, \cite{GutWhite}, and \cite{Rubin} established surjectivity. However, the constructions in \cite{BonPer}, \cite{GutWhite}, and \cite{Rubin} are all essentially algebraic. For example, the simplest $N_\infty$ operads considered in \cite{Rubin} are constructed by generating a free discrete operad on the desired operations, and then attaching cells to kill all homotopy. All $N_\infty$ operads arise in this way, up to equivalence, which reflects the fact that the definitions of $N_\infty$ operads and indexing systems only axiomatize general features of equivariant composition. It is natural to ask how the geometry of Steiner and linear isometries operads is encoded by the algebra of indexing systems.

\begin{prob}\label{prob:image} Given a finite group $G$, identify extra algebraic conditions on indexing systems that characterize the images of the Steiner operads and linear isometries operads under the map $A : \t{Ho}(N_\infty\t{-}\b{Op}) \to \b{Ind}$.
\end{prob}

We shall solve this problem in a few, special cases.

\subsection{Structural obstacles}

Suppose that $U$ is a $G$-universe, and let $\c{K}(U)$ and $\c{L}(U)$ be the corresponding Steiner and linear isometries operads. Problem \ref{prob:image} asks what the possible values of $A(\c{K}(U))$ and $A(\c{L}(U))$ are. One's first thought might be to leverage relations between universes into relations between indexing systems. Unfortunately, this approach does not work as well as one might hope.

To start, note that the admissible sets of $\c{K}(U)$ and $\c{L}(U)$ depend only on the isomorphism class of $U$. Thus, we introduce notation.

\begin{defn} Let $\b{Uni} = \b{Uni}(G)$ denote the set of all isomorphism classes $[U]$ of $G$-universes $U$.
\end{defn}

We declare $[U] \leq [U']$ if there is a $G$-embedding $U \hookrightarrow U'$ for some representatives $U$ and $U'$. The minimum element of $\b{Uni}$ is the class of a trivial universe, and the maximum element is the class of a complete universe. The join of $[U]$ and $[U']$ is represented by $U \oplus U'$, and the meet $[U] \land [U']$ is the universe that contains infinitely many copies of each irreducible $V$ that embeds into both $U$ and $U'$. Thus $\b{Uni}$ is a lattice. It is isomorphic to a $n$-cube, where $n$ is the number of nontrivial irreducible real representations of $G$, up to isomorphism.

The lattice $\b{Uni}(G)$ carries a right action by the group $\b{Aut}(G)$ of automorphisms of $G$. Given a class $[U] \in \b{Uni}(G)$ and an automorphism $\sigma \in \b{Aut}(G)$, we declare $[U] \cdot \sigma$ to be the class represented by the $G$-universe $G \stackrel{\sigma}{\to} G \to O(U)$. On the other hand, $\b{Ind}(G)$ is also a lattice, and it inherits a right $\b{Aut}(G)$-action from the corresponding action on $G$. Explicitly, given $\sigma \in \b{Aut}(G)$ we declare
	\begin{enumerate}[label=(\roman*)]
		\item{} $g \cdot \sigma = \sigma^{-1}(g)$ for all $g \in G$,
		\item{} $H \cdot \sigma = \sigma^{-1} H$ for every subgroup $H \subset G$,
		\item{} $T \cdot \sigma = [ \sigma^{-1}H \stackrel{\sigma}{\to} H \to \b{Perm}(T)]$ for every subgroup $H \subset G$ and finite $H$-set $T$, and
		\item{} $\c{I} \cdot \sigma = \{T \cdot \sigma \, | \, T \in \c{I} \}$ for every $G$-indexing system $\c{I}$.
	\end{enumerate}
These formulas define $\b{Aut}(G)$-actions on $G$, $\b{Sub}(G)$, $\ub{Set}$, and $\b{Ind}(G)$.

In an ideal world, the functions
	\[
	A(\c{K}(-)) , \, A(\c{L}(-)) : \b{Uni}(G) \rightrightarrows \b{Ind}(G)
	\]
would preserve all structure in sight. But our world is not ideal.

\begin{prop}\label{prop:notlattice} Neither $A(\c{K}(-))$ nor $A(\c{L}(-))$ is a lattice map in general.
\end{prop}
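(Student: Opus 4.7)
The plan is to exhibit a pair of $G$-universes whose meet in $\b{Uni}(G)$ is not mapped to the meet of the resulting indexing systems, for both $\c{K}(-)$ and $\c{L}(-)$ simultaneously. The natural test case is $G = K_4 = C_2 \times C_2$, which has three index-two subgroups $H_1 = \langle a\rangle$, $H_2 = \langle b\rangle$, and $H_3 = \langle ab\rangle$. For each $i$, let $\sigma_i$ denote the pullback of the sign representation of $K_4/H_i \cong C_2$ along the quotient $K_4 \twoheadrightarrow K_4/H_i$, so that $\sigma_i$ is the nontrivial character of $K_4$ with kernel $H_i$. I would consider
\[
U_1 = \bb{R}^\infty \oplus \sigma_1^\infty \qquad \text{and} \qquad U_2 = \bb{R}^\infty \oplus \sigma_2^\infty.
\]
These universes share only the trivial irreducible, so $U_1 \wedge U_2$ is the trivial $K_4$-universe and both $A(\c{K}(U_1 \wedge U_2))$ and $A(\c{L}(U_1 \wedge U_2))$ equal the minimum indexing system $\ub{triv}$.

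The target is to show that the $H_3$-set $T = H_3/e$ lies in $A(\c{K}(U_1)) \wedge A(\c{K}(U_2))$ and in $A(\c{L}(U_1)) \wedge A(\c{L}(U_2))$, which would certainly preclude either map from being a lattice map. The computation that makes this work is the character identity
\[
\sigma_i(ab) \,=\, \sigma_i(a)\,\sigma_i(b) \,=\, -1 \qquad (i = 1,2),
\]
which says that each $\sigma_i|_{H_3}$ is the nontrivial sign character of $H_3 \cong C_2$. Consequently $U_1|_{H_3}$ and $U_2|_{H_3}$ are both $H_3$-universes containing infinitely many copies of the trivial and sign representations of $H_3$.

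Finally, I would invoke the translations of admissibility into representation theory reviewed in \S\ref{sec:overview}. For the Steiner operad, $T$ is admissible for $\c{K}(U_i)$ iff $U_i|_{H_3}$ contains a vector with trivial $H_3$-stabilizer, which follows from the presence of the sign representation. For the linear isometries operad, unwinding $\Gamma(T)$-fixed points shows that $T$ is admissible for $\c{L}(U_i)$ iff the $H_3$-representation $U_i|_{H_3} \otimes \bb{R}[H_3]$ admits an equivariant linear isometric embedding into $U_i|_{H_3}$; since $\bb{R}[H_3]$ is the sum of the trivial and sign representations of $H_3$, the required $H_3$-irreducibles already appear infinitely often in $U_i|_{H_3}$, and the embedding exists. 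The only real obstacle is picking the right group and pair of universes; once they are fixed, the verification reduces to the one-line character computation above.
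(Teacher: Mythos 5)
Your counterexample is correct, and it does the job for both maps at once: with $G=K_4$ and $U_1=(\bb{R}\oplus\sigma_1)^\infty$, $U_2=(\bb{R}\oplus\sigma_2)^\infty$, the character computation $\sigma_i(ab)=-1$ shows both restrictions to $H_3=\la ab\ra$ are complete $H_3$-universes, so $H_3/e$ is admissible for $\c{K}(U_i)$ (a free vector exists, per the criterion of \cite[Theorem 4.19]{BH}) and for $\c{L}(U_i)$ (since $\t{ind}_e^{H_3}\t{res}_e U_i\cong\bb{R}[H_3]^\infty$ embeds, per \cite[Theorem 4.18]{BH}), while $[U_1]\land[U_2]=[\bb{R}^\infty]$ gives $\ub{triv}$; hence neither $A_{\c{K}}$ nor $A_{\c{L}}$ preserves meets. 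The paper argues by citing different examples: Example \ref{ex:C5notlat} ($G=C_5$, two rotation universes with trivial meet) for the failure of $A_{\c{K}}$ to preserve meets, and Example \ref{ex:C9notlat} ($G=C_9$) showing the stronger fact that $A_{\c{L}}$ is not even order-preserving — something your meet-failure argument cannot detect, and which matters later since Proposition \ref{prop:admDUmap} shows $A_{\c{K}}$ \emph{is} order-preserving, so the two maps fail to be lattice maps for genuinely different reasons. Your $K_4$ data is in fact implicit in the paper's Example \ref{ex:K4linisom} (your $U_1$, $U_2$ are $\Sigma_3$-translates of $(\bb{R}\oplus\sigma_c)^\infty$, whose $\c{L}$-transfer system contains $1\to\la a\ra$ and $1\to\la b\ra$), which the paper's proof cites as providing "similar counterexamples"; so your route is a legitimate, slightly more economical variant, at the cost of establishing only the weaker meet-failure for $A_{\c{L}}$.
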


\begin{proof} Example \ref{ex:C5notlat} shows that $A(\c{K}(-))$ does not preserve meets when $G = C_5$, and Example \ref{ex:C9notlat} shows that $A(\c{L}(-))$ does not preserve the order when $G = C_9$. The calculations in Examples \ref{ex:K4linisom}, \ref{ex:Q8linisom}, and \ref{ex:C6linisom} provide similar counterexamples.
\end{proof}

By \cite[Theorem 4.19]{BH}, the operad $\c{K}(U) \simeq \c{D}(U)$ admits an orbit $H/K$ if and only if there is an $H$-embedding $H/K \hookrightarrow \t{res}^G_H U$. This complicates things when nonisomorphic $G$-representations decompose into the same orbits. Recall the $C_n$-representations $\lambda(k)$ described in Convention \ref{conv:Glambdasigma}.

\begin{ex}\label{ex:C5notlat} Suppose $G = C_5$. The $C_5$-universes $U_1 = (\bb{R} \oplus \lambda(1))^\infty$ and $U_2 = (\bb{R} \oplus \lambda(2))^\infty$ are incomparable, but the free orbit $C_5/C_1$ embeds into both of them. Hence $A(\c{K}(U_1)) = A(\c{K}(U_2)) = \ub{Set}$, and hence $A(\c{K}(U_1)) \land A(\c{K}(U_2)) = \ub{Set}$. On the other hand, we have $[U_1] \land [U_2] = [\bb{R}^\infty]$, and thus $A(\c{K}([U_1] \land [U_2])) = \ub{triv}$. Therefore $A(\c{K}(-))$ does not preserve meets.
\end{ex}

As for linear isometries operads, \cite[Theorem 4.18]{BH} states that the operad $\c{L}(U)$ admits an orbit $H/K$ if and only if there is an $H$-embedding $\t{ind}_K^H\t{res}^G_K U \hookrightarrow \t{res}^G_H U$. This complicates things because we cannot analyze the problem one irreducible subrepresentation of $U$ at a time.

\begin{ex}\label{ex:C9notlat} Let $G = C_9$ and consider the incomplete universes $U_1 = (\bb{R} \oplus \lambda(3))^\infty$ and $U_2 = (\bb{R} \oplus \lambda(1) \oplus \lambda(3))^\infty$. Then $[U_1] < [U_2]$, but we shall see that $A(\c{L}(U_1))$ and $A(\c{L}(U_2))$ are incomparable.

First, consider the admissibles of $\c{L}(U_1)$. The restriction $\t{res}^{C_9}_{C_3} U_1$ is a trivial $C_3$-universe and $\t{ind}_{C_3}^{C_9}\t{res}^{C_9}_{C_3}U_1 \cong U_1$. Hence $\c{L}(U_1)$ admits $C_9/C_3$. On the other hand, $\t{ind}_{C_1}^{C_3}\t{res}^{C_9}_{C_1} U_1$ is a complete $C_3$-universe, and thus $\c{L}(U_1)$ does not admit $C_3/C_1$.

Now consider the admissibles of $\c{L}(U_2)$. The restriction $\t{res}^{C_9}_{C_3}U_2$ is a complete $C_3$-universe, and hence $\c{L}(U_2)$ admits $C_3/C_1$. On the other hand, $\t{ind}_{C_3}^{C_9}\t{res}^{C_9}_{C_3}U_2$ is a complete $C_9$-universe, and therefore $\c{L}(U_2)$ does not admit $C_9/C_3$.
\end{ex}

That being said, we can salvage the situation to some extent.

\begin{prop}\label{prop:admDUmap} The function $A_{\c{K}} = A(\c{K}(-)) : \b{Uni} \to \b{Ind}$ is $\b{Aut}(G)$-equivariant, and it preserves the order, the maximum element, the minimum element, and joins. It is not always order-reflecting, meet-preserving, or injective.
\end{prop}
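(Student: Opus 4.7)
The plan is to apply the Blumberg--Hill admissibility criterion \cite[Theorem 4.19]{BH}: an orbit $H/K$ lies in $A_{\c{K}}(U)(H)$ precisely when there is an $H$-equivariant embedding $H/K \hookrightarrow \t{res}^G_H U$. Order preservation is then immediate, since a $G$-embedding $U \hookrightarrow U'$ restricts to $\t{res}^G_H U \hookrightarrow \t{res}^G_H U'$. The maximum is preserved because a complete $G$-universe restricts to a complete $H$-universe, which contains the regular representation $\bb{R}[H/K]$ and hence the orbit $H/K$. The minimum is preserved because in a trivial $H$-universe every vector has stabilizer $H$, so only $H/H$ embeds. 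For $\b{Aut}(G)$-equivariance, the key observation is that $\sigma : \sigma^{-1}H \to H$ is a group isomorphism intertwining the $\sigma^{-1}H$-action on $\t{res}^G_{\sigma^{-1}H}(U \cdot \sigma)$ with the $H$-action on $\t{res}^G_H U$, so $H$-embeddings $H/K \hookrightarrow \t{res}^G_H U$ correspond to $\sigma^{-1}H$-embeddings $\sigma^{-1}H/\sigma^{-1}K \hookrightarrow \t{res}^G_{\sigma^{-1}H}(U \cdot \sigma)$.

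The main step, and the only delicate positive claim, is join preservation. One direction, $A_{\c{K}}(U) \vee A_{\c{K}}(U') \subset A_{\c{K}}(U \oplus U')$, is immediate from order preservation. For the reverse, suppose $H/K$ embeds into $\t{res}^G_H(U \oplus U')$, necessarily via $hK \mapsto h \cdot (v, v')$ for some pair, and set $K_1 = \t{Stab}_H(v)$ and $K_2 = \t{Stab}_H(v')$. Since $h \cdot (v, v') = (hv, hv')$, the stabilizer of a pair in a direct sum is the intersection of stabilizers, so $K = K_1 \cap K_2$. Projecting to each summand gives $H$-embeddings $H/K_1 \hookrightarrow \t{res}^G_H U$ and $H/K_2 \hookrightarrow \t{res}^G_H U'$, placing $H/K_1$ in $A_{\c{K}}(U)(H)$ and $H/K_2$ in $A_{\c{K}}(U')(H)$; both therefore lie in $\c{I}' := A_{\c{K}}(U) \vee A_{\c{K}}(U')$. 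Restricting, $\t{res}^H_{K_1}(H/K_2) \in \c{I}'(K_1)$, and the $K_1$-orbit of $eK_2$ in $H/K_2$ is $K_1/(K_1 \cap K_2) = K_1/K$, so the subobject axiom gives $K_1/K \in \c{I}'(K_1)$. Finally, self-induction along $H/K_1 \in \c{I}'(H)$ produces $\t{ind}_{K_1}^H(K_1/K) = H/K \in \c{I}'(H)$, as required.

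The three negative assertions all follow at once from Example \ref{ex:C5notlat}. The incomparable $C_5$-universes $U_1$ and $U_2$ constructed there satisfy $A_{\c{K}}(U_1) = A_{\c{K}}(U_2) = \ub{Set}$ while $A_{\c{K}}([U_1] \wedge [U_2]) = \ub{triv}$. This simultaneously defeats injectivity (two inputs with the same image), order-reflection (the outputs are equal, hence trivially $\leq$, while the inputs are incomparable), and meet-preservation (the image of the meet is strictly below the meet of the images).
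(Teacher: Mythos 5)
Your proof is correct, and it follows the paper's outline for everything except the join step, where you take a genuinely more self-contained route. The paper disposes of joins in one line by citing Proposition \ref{prop:discind}: writing $U \oplus U' $ as a sum of irreducibles, the transfer system $\to_{\c{K}(U \oplus U')}$ is generated by the union of the $Orb(V_i)$, which is exactly the join of $\to_{\c{K}(U)}$ and $\to_{\c{K}(U')}$; that proposition in turn rests on Lemma \ref{lem:discGorb} and Lemma \ref{lem:transysint}. You instead argue directly at the level of indexing systems: the image $(v,v')$ of $eK$ has stabilizer $K = K_1 \cap K_2$ with $K_i$ the stabilizers of the components, the two orbit projections put $H/K_1 \in A_{\c{K}}(U)(H)$ and $H/K_2 \in A_{\c{K}}(U')(H)$, and then restriction, subobjects, and self-induction inside the join $\c{I}'$ yield $H/K \cong \t{ind}_{K_1}^H(K_1/K) \in \c{I}'(H)$. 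This is exactly the intersection-of-stabilizers mechanism the paper encapsulates in Lemma \ref{lem:transysint} (stated there in transfer-system language), so the mathematical content coincides; what your version buys is independence from the transfer-system formalism of \S\ref{sec:transys} and Proposition \ref{prop:discind}, at the cost of redoing a special case of that machinery by hand. The remaining items — order preservation by composing embeddings, $\b{Aut}(G)$-equivariance by applying $(-)\cdot\sigma$ and $(-)\cdot\sigma^{-1}$, the trivial and complete universes hitting $\ub{triv}$ and $\ub{Set}$, and Example \ref{ex:C5notlat} defeating injectivity, order-reflection, and meet-preservation all at once — match the paper's proof.
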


\begin{proof} Composing embeddings of orbits with embeddings of universes proves that $A_{\c{K}}$ is order-preserving, and applying $(-) \cdot \sigma$ and $(-) \cdot \sigma^{-1}$ shows that $T$ embeds into $\t{res}^G_H U$ if and only if $T \cdot \sigma$ embeds into $U \cdot \sigma$. It follows that $A_{\c{K}}$ preserves the $\b{Aut}(G)$-action. We have $A_{\c{K}}([\bb{R}^\infty]) = \ub{triv}$ because the only orbits that embed in $\bb{R}^\infty$ are trivial, and $A_{\c{K}}([\bb{R}[G]^\infty]) = \ub{Set}$ because every orbit embeds in $\bb{R}[G]$. Proposition \ref{prop:discind} implies that $A_{\c{K}}$ preserves joins, and Example \ref{ex:C5notlat} shows that $A_{\c{K}}$ need not reflect the order, preserve meets, or be injective.
\end{proof}

\begin{prop}\label{prop:admLUmap} The function $A_{\c{L}} = A(\c{L}(-)) : \b{Uni} \to \b{Ind}$ is $\b{Aut}(G)$-equivariant, and it preserves maximum and minimum elements. It is not always order-preserving, order-reflecting, join-preserving, meet-preserving, or injective.
\end{prop}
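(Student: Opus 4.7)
The plan is to parallel the proof of Proposition \ref{prop:admDUmap}, substituting the Blumberg--Hill criterion for linear isometries (\cite[Theorem 4.18]{BH}) for the criterion used there for Steiner operads: namely, $\c{L}(U)$ admits $H/K$ if and only if there is an $H$-embedding $\t{ind}_K^H\t{res}^G_K U \hookrightarrow \t{res}^G_H U$. All four positive claims reduce to manipulating this one condition, and all five negative claims reduce to citing examples that are supplied elsewhere in the paper.

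First I will verify $\b{Aut}(G)$-equivariance. Since induction, restriction, and the twistings $U \mapsto U \cdot \sigma$, $H \mapsto H \cdot \sigma$, $K \mapsto K \cdot \sigma$ all commute up to canonical isomorphism, an $H$-embedding $\t{ind}_K^H\t{res}^G_K U \hookrightarrow \t{res}^G_H U$ exists if and only if the analogous $(H \cdot \sigma)$-embedding exists after twisting by $\sigma$. This is a routine diagram chase. Next I handle the extremes. When $U$ is complete, $\t{res}^G_H U$ is a complete $H$-universe, so every finite-dimensional $H$-representation embeds into it; hence every orbit is admissible and $A_{\c{L}}([\bb{R}[G]^\infty]) = \ub{Set}$. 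When $U$ is trivial, $\t{res}^G_K U$ is a trivial $K$-universe, and $\t{ind}_K^H\t{res}^G_K U$ breaks up as a sum of copies of $\bb{R}[H/K]$. Whenever $K \subsetneq H$, the regular representation $\bb{R}[H/K]$ contains a nontrivial isotype, so no embedding into the trivial $H$-universe $\t{res}^G_H U$ can exist; hence only trivial orbits are admissible and $A_{\c{L}}([\bb{R}^\infty]) = \ub{triv}$.

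For the negative statements, I will quote forthcoming examples and one small lattice observation. Example \ref{ex:C9notlat} exhibits universes $[U_1] < [U_2]$ in $\b{Uni}(C_9)$ whose images under $A_{\c{L}}$ are incomparable, which immediately rules out order-preservation and order-reflection. Any join- or meet-preserving map between lattices is automatically monotone, so the same example also rules out join-preservation and meet-preservation. Non-injectivity follows from any one of Examples \ref{ex:K4linisom}, \ref{ex:Q8linisom}, or \ref{ex:C6linisom}, each of which furnishes non-isomorphic $G$-universes whose linear isometries operads admit precisely the same sets.

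There is no serious obstacle in this proof: everything reduces either to the admissibility criterion or to appealing to counterexamples supplied elsewhere. If anything requires care, it is the minimum-element calculation, where one must be explicit that a nontrivial summand of $\bb{R}[H/K]$ obstructs any embedding into a trivial universe.
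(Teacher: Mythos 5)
Your positive claims (equivariance, preservation of the maximum and minimum) are argued exactly as in the paper: twisting by $\sigma$ commutes with $\t{res}$ and $\t{ind}$ and preserves embeddings, every universe embeds in a complete one, and no nontrivial universe embeds in a trivial one (your explicit remark that $\t{ind}_K^H\t{res}^G_K U$ is a sum of copies of $\bb{R}[H/K]$, which has a nontrivial isotype once $K \subsetneq H$, is just a spelled-out version of the paper's one-liner). The reduction of join- and meet-preservation to order-preservation via monotonicity of lattice maps is also the paper's argument, and Example \ref{ex:C9notlat} is indeed what the paper cites for the failure of order-preservation.

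There is, however, one step that does not work as stated: you claim that Example \ref{ex:C9notlat} ``immediately rules out order-preservation and order-reflection.'' It rules out only order-preservation. In that example $[U_1] < [U_2]$ while $A_{\c{L}}([U_1])$ and $A_{\c{L}}([U_2])$ are \emph{incomparable}; since neither image is $\leq$ the other, the hypothesis of order-reflection is never engaged, so no violation of order-reflection is exhibited. A failure of order-reflection requires a pair with $A_{\c{L}}([U]) \leq A_{\c{L}}([U'])$ but $[U] \not\leq [U']$. The paper gets this (together with non-injectivity) from Example \ref{ex:C5notlat}: for $G = C_5$ the incomparable universes $U_1, U_2$ there satisfy $A_{\c{L}}([U_1]) = A_{\c{L}}([U_2]) = \ub{triv}$. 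Your proof is repairable without new ideas, since you do establish non-injectivity (the $C_6$ table in Example \ref{ex:C6linisom} gives several distinct universes with image $\indpq$; the $K_4$ and $Q_8$ tables give it after invoking equivariance, since the repeated images are invariant transfer systems of non-invariant universes), and non-injectivity formally implies failure of order-reflection: if $A_{\c{L}}([U]) = A_{\c{L}}([U'])$ with $[U] \neq [U']$, order-reflection would force $[U] \leq [U']$ and $[U'] \leq [U]$, contradicting antisymmetry. Either make that deduction explicit or cite the $C_5$ example directly, as the paper does; note also that the paper's choice keeps the counterexamples local to \S2 rather than forward-referencing the linear-isometries computations of \S5, though the latter are independent calculations and the forward reference is harmless.
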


\begin{proof} We begin with the $\b{Aut}(G)$-equivariance. Right multiplication $(-) \cdot \sigma$ preserves embeddings, and it commutes with restriction and induction. Therefore $\t{ind}_K^H\t{res}^G_H U$ embeds into $\t{res}^G_H U$ if and only if $\t{ind}_{\sigma^{-1}K}^{\sigma^{-1}H} \t{res}^G_{\sigma^{-1}H} (U\sigma)$ embeds into $\t{res}^G_{\sigma^{-1}H}(U\sigma)$. It follows $H/K \in A_{\c{L}}([U])$ if and only if $(H/K)\sigma \in A_{\c{L}}([U]\sigma)$, and passing to coproducts shows $A_{\c{L}}([U]) \sigma = A_{\c{L}}([U]\sigma)$. The map $A_{\c{L}}$ preserves minimum and maximum elements because no nontrivial universe embeds in a trivial one, and every universe embeds into a complete one.

Consider the universes in Example \ref{ex:C5notlat} once more. Keeping the same notation, we have $A_{\c{L}}(U_1) = A_{\c{L}}(U_2) = \ub{triv}$, and therefore $A_{\c{L}}$ is not injective or order-reflecting for $G = C_5$. Example \ref{ex:C9notlat} shows that $A_{\c{L}}$ is not order-preserving for $G=C_9$, and therefore $A_{\c{L}}$ does not  preserve all joins and meets in that case, either.
\end{proof}

The failure of $A_{\c{K}}$ to preserve meets is a nuisance, but it is counterbalanced by the fact that $A_{\c{K}}$ preserves joins. The failure of $A_{\c{L}}$ to preserve the order is more serious. It precludes a clean, structural approach to Problem \ref{prob:image} for linear isometries operads. To move forward, we elaborate on Blumberg and Hill's calculations of $A_{\c{K}}$ and $A_{\c{L}}$, and then we study the formulas that fall out.

\section{Transfer systems}\label{sec:transys}

In this section, we take a detour to introduce formalism that simplifies our discussion of Problem \ref{prob:image}. Indexing systems are proper class-sized objects, but they are determined by finite sets of orbits. Reformulating Definition \ref{defn:indsys} in these terms leads to our notion of a \emph{transfer system} (Definition \ref{defn:transys}). We prove that transfer systems are equivalent to indexing systems (Theorem \ref{thm:transys}) and to the indexing categories of \cite{BH2} (Corollary \ref{cor:transysBH2}). We also give a handful of examples in \S\ref{subsec:exts}. We reiterate that Balchin, Barnes, and Roitzheim \cite{BBR} have independently developed the same formalism.

\subsection{The data in an indexing system}

There are several ways to think of indexing systems. From an operadic standpoint, they are equivalent to homotopy types of $N_\infty$ operads (Theorem \ref{thm:classNinfty}). From an algebraic standpoint, they are equivalent to indexing categories in the sense below \cite[Theorem 3.17]{BH2}.

\begin{defn}Let $\b{Set}_{fin}^G$ denote the category of finite $G$-sets. An \emph{indexing category} is a wide, pullback stable, finite coproduct complete subcategory $\s{D} \subset \b{Set}_{fin}^G$. We write $\b{IndCat}$ for the poset of all indexing categories.
\end{defn}

Such categories naturally parametrize the transfers on incomplete Mackey functors and the norms on incomplete Tambara functors.

Our transfer systems encode generating data in indexing systems and indexing categories. Informally, a transfer system is a diagram of the orbits in an indexing system, or the intersection of an indexing category $\s{D} \subset \b{Set}^G_{fin}$ with the orbit category $\s{O}_G$. We consider the relationship to indexing systems first.

\begin{defn}\label{defn:graphindsys} Suppose $\c{I}$ is a $G$-indexing system. We define the \emph{graph of $\c{I}$} to be the set $\b{Sub}(G)$, equipped with the binary relation $\to_{\c{I}}$ below:
	\[
	K \to_{\c{I}} H \quad\t{ if and only if }\quad K \subset H \t{ and } H/K \in \c{I}.
	\]
\end{defn}

We think of subgroups $H \subset G$ as vertices, and relations $K \to_{\c{I}} H$ as directed edges. The indexing system axioms imply the following properties of $\to_{\c{I}}$.

\begin{prop}\label{prop:indsystransys}Suppose that $\c{I}$ is a $G$-indexing system. Then $\to \,\, = \,\, \to_{\c{I}}$ is:
	\begin{enumerate}[label=(\alph*)]
		\item{}a partial order,
		\item{}a refinement of the subset relation: if $K \to H$, then $K \subset H$,
		\item{}closed under conjugation: if $K \to H$, then $(gKg^{-1}) \to (gHg^{-1})$ for every group element $g \in G$, and
		\item{}closed under restriction: if $K \to H$ and $L \subset H$, then $(K \cap L) \to L $.
	\end{enumerate}
If $\c{I}$ is a $\Lambda$-indexing system, then $\to$ also is:
	\begin{enumerate}[resume,label=(\alph*)]
		\item{}saturated: if $K \to H$ and $K \subset L \subset H$, then $K \to L$ and $L \to H$.
	\end{enumerate}
\end{prop}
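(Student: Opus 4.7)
The plan is to verify each of (a)--(e) by directly translating the corresponding closure axiom of an indexing system into a statement about the relation $\to_{\c{I}}$. Most of these are bookkeeping; only transitivity in (a) genuinely uses a nontrivial closure axiom (self-induction).

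First I would dispatch (b), which is built into Definition \ref{defn:graphindsys}. Then I would turn to (a). Reflexivity $H \to_{\c{I}} H$ amounts to $H/H \in \c{I}(H)$, which follows from axiom (1) since $H/H$ is a trivial $H$-set. Antisymmetry is immediate from (b) together with the analogous statement for subgroups. For transitivity, suppose $K \to_{\c{I}} L$ and $L \to_{\c{I}} H$, so that $L/K \in \c{I}(L)$ and $H/L \in \c{I}(H)$. By the self-induction axiom (7), $\t{ind}_L^H(L/K) \in \c{I}(H)$, and there is a natural $H$-isomorphism $\t{ind}_L^H(L/K) \cong H/K$ given by $[h,\ell K] \mapsto h\ell K$. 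Hence $H/K \in \c{I}(H)$ by axiom (2), i.e., $K \to_{\c{I}} H$.

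For (c), if $K \to_{\c{I}} H$ then axiom (4) gives $c_g(H/K) \in \c{I}(gHg^{-1})$, and this conjugated $(gHg^{-1})$-set is isomorphic to $(gHg^{-1})/(gKg^{-1})$, whence $gKg^{-1} \to_{\c{I}} gHg^{-1}$ after invoking axiom (2). For (d), assume $K \to_{\c{I}} H$ and $L \subset H$. By axiom (3), $\t{res}^H_L(H/K) \in \c{I}(L)$. The element $K \in H/K$ has stabilizer $K \cap L$ under the $L$-action, so the $L$-orbit through $K$ is an $L$-subset isomorphic to $L/(K \cap L)$; the subobjects axiom (5) then gives $L/(K \cap L) \in \c{I}(L)$, i.e., $(K \cap L) \to_{\c{I}} L$.

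Finally, (e) is essentially a restatement. If $\c{I}$ satisfies $(\Lambda)$ and $K \to_{\c{I}} H$ with $K \subset L \subset H$, then $H/K \in \c{I}$ forces $L/K \in \c{I}(L)$ and $H/L \in \c{I}(H)$ by the very definition of a $\Lambda$-indexing system, which unwinds to $K \to_{\c{I}} L$ and $L \to_{\c{I}} H$. The only step where I anticipate any subtlety is the identification $\t{ind}_L^H(L/K) \cong H/K$ used for transitivity; everything else is a routine translation of the axioms.
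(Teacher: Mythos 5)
Your proposal is correct and follows essentially the same route as the paper: (b) by definition, reflexivity from trivial sets, antisymmetry from (b), transitivity via self-induction and the isomorphism $\t{ind}_L^H(L/K) \cong H/K$, (c) via conjugation plus isomorphism, (d) via restriction plus subobjects (the paper phrases your orbit-stabilizer observation as the embedding $L/(L\cap K) \hookrightarrow \t{res}^H_L H/K$), and (e) as a restatement of $(\Lambda)$. No gaps.
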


\begin{proof} Part $(b)$ holds by fiat. For $(a)$, reflexivity holds because $\c{I}$ contains all trivial actions, and antisymmetry follows from $(b)$. For transitivity, suppose $K \to L$ and $L \to H$. Then $L/K \in \c{I}$ and $H/L \in \c{I}$, and hence $H/K \cong \t{ind}_L^H L/K \in \c{I}$ because $\c{I}$ is closed under isomorphism and self-induction. Condition $(c)$ holds because if $K \to H$, then $H/K \in \c{I}$, and hence $gHg^{-1}/gKg^{-1} \cong c_g H/K \in \c{I}$ because $\c{I}$ is closed under isomorphism and conjugation. Condition $(d)$ holds because we have an embedding $L/(L \cap K) \hookrightarrow \t{res}^H_L H/K$, and $\c{I}$ is closed under restriction and subobjects. Condition $(e)$ is a restatement of condition $(\Lambda)$.
\end{proof}

Thus, we make a definition.

\begin{defn}\label{defn:transys} Let $G$ be a finite group. A \emph{$G$-transfer system} is a partial order on $\b{Sub}(G)$, which refines the subset relation, and which is closed under conjugation and restriction in the sense of Proposition \ref{prop:indsystransys}. We use arrows $\to$ to denote transfer systems. A transfer system $\to$ is \emph{saturated} if it also satisfies condition $(e)$ above. Let $\b{Tr} = \b{Tr}(G)$ denote the poset of all $G$-transfer systems $\to$ ordered under refinement, i.e. declare $\to_1 \, \leq \, \to_2$ if and only if $K \to_1 H$ implies $K \to_2 H$ for all $K,H \subset G$.
\end{defn}

\begin{rem} We explain the terminology. Suppose $\s{O}$ is a $N_\infty$ $G$-operad. The transfer system $\to_{\s{O}}$ corresponding to the class of $\s{O}$-admissible sets satisfies
	\[
	K \to_{\s{O}} H	\quad\t{if and only if}\quad	 K \subset H \t{ and }\s{O}(\abs{H:K})^{\Gamma(H/K)} \neq \varnothing .
	\]
We shall see that a relation $K \to_{\s{O}} H$ gives rise to a transfer map on $\s{O}$-algebras.

Suppose $K$ and $H$ are subgroups such that $K \to_{\s{O}} H$, and write $n = \abs{H:K}$. Order $H/K$ as $\{r_1K < \dots < r_n K\}$, let $\Gamma = \Gamma(H/K)$ be graph of the the corresponding permutation representation $\sigma : H \to \Sigma_n$, and choose a $\Gamma$-fixed operation $f \in \s{O}(n)$. If $X$ is an $\s{O}$-algebra $G$-space, then there is a transfer map
	\[
	\t{tr}_K^H(x) = f(r_1 x , \dots , r_n x) : X^K \to X^H .
	\]
On the other hand, if we regard $f$ as a map $\frac{G\times \Sigma_n}{\Gamma} \to \s{O}(n)$, then we obtain a $G$-map
	\[
	\ol{\t{tr}}_K^H : G \times_H X^{\times H/K} \cong \frac{G \times \Sigma_n}{\Gamma} \underset{\Sigma_n}{\times} X^{\times n} \stackrel{f \times \t{id}}{\longrightarrow} \s{O}(n) \underset{\Sigma_n}{\times} X^{\times n} \longrightarrow X .
	\]
Here $X^{\times H/K}$ is the space $X^{\times n}$ equipped with the $H$-action
	\[
	h (x_1, \dots, x_n) = (h x_{\sigma(h)^{-1}1} , \dots, h x_{\sigma(h)^{-1}n} ) 
	\]
and the isomorphism $G \times_H X^{\times H/K} \cong \frac{G \times \Sigma_n}{\Gamma} \times_{\Sigma_n} X^{\times n}$ identifies $[g , (x_1,\dots,x_n) ]$ with $[ [g,1] , (gx_1,\dots,gx_n)]$. The map $\ol{\t{tr}}_K^H$ is an external version of $\t{tr}_K^H : X^K \to X^H$. We recover $\t{tr}_K^H$ by taking $H$-fixed points of the adjoint $X^{\times H/K} \to \t{res}^G_H X$, and then composing with the map $X^K \cong (X^{\times H/K})^H$ that identifies $x$ with $(r_1 x , \dots , r_n x)$.

Similarly, if $E$ is an $\s{O}$-algebra $G$-spectrum, then by \cite[Construction 6.5]{BH}, we obtain an external norm map 
	\[
	\ol{\t{n}}_K^H : G_+ \land_H N_K^H \t{res}^G_K E \cong \frac{G \times \Sigma_n}{\Gamma} {}_+ \underset{\Sigma_n}{\land} E^{\land n} \stackrel{f_+ \land \t{id}}{\longrightarrow} \s{O}(n)_+ \underset{\Sigma_n}{\land} E^{\land n} \longrightarrow E .
	\]
\end{rem}

The construction of a transfer system from an indexing system is reversible, because indexing systems are determined by their orbits.

\begin{prop}\label{prop:transysindsys}If $\to$ is a $G$-transfer system, then there is a unique $G$-indexing system $\c{I} = \c{I}_{\to}$ such that $\to_{\c{I}} \,\, = \,\, \to$. More specifically, $\c{I}_{\to}(H)$ is the class of all finite coproducts of $H$-orbits $H/K$ such that $K \to H$. The transfer system $\to$ is saturated if and only if $\c{I}_{\to}$ is a $\Lambda$-indexing system.
\end{prop}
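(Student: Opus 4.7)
The plan is to establish uniqueness first, then construct $\c{I}_\to$ explicitly and verify the seven indexing system axioms in order, and finally compare the saturation condition with condition $(\Lambda)$.

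For uniqueness, I would observe that any indexing system $\c{I}$ is determined by the classes of orbits it contains at each level, because every finite $H$-set decomposes as a finite disjoint union of $H$-orbits and $\c{I}(H)$ is closed under isomorphism, coproducts, and subobjects. Since the orbits $H/K$ in $\c{I}(H)$ are precisely those with $K \to_{\c{I}} H$, any $\c{I}$ with $\to_{\c{I}} \,=\, \to$ must agree with the explicit $\c{I}_\to$ described in the statement.

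For existence, I would define $\c{I}_\to(H)$ as the class of finite $H$-sets isomorphic to $\bigsqcup_i H/K_i$ with each $K_i \to H$, and then verify conditions (1)--(7) of Definition \ref{defn:indsys}. Trivial sets, isomorphism, and coproducts are immediate from the definition. For subobjects, if $S \subset T$ and $T = \bigsqcup_i H/K_i$ lies in $\c{I}_\to(H)$, then $S$ is a union of some of the orbit summands of $T$, so $S \in \c{I}_\to(H)$. Conjugation closure of $\c{I}_\to$ follows directly from the corresponding closure property of $\to$ applied to an orbit decomposition. Restriction is the first step requiring real work: given $K \to H$ and $L \subset H$, the double coset formula decomposes $\t{res}^H_L(H/K)$ as $\bigsqcup_{g} L/(L \cap gKg^{-1})$ where $g$ ranges over representatives of $L \backslash H / K$. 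For each such $g \in H$ we have $gHg^{-1} = H$, so conjugation closure gives $gKg^{-1} \to H$, and then restriction closure of $\to$ (applied to $L \subset H$) yields $L \cap gKg^{-1} \to L$, so each summand lies in $\c{I}_\to(L)$. Finally, self-induction: if $T = \bigsqcup_i K/L_i \in \c{I}_\to(K)$ and $H/K \in \c{I}_\to(H)$, then each $L_i \to K$ and $K \to H$, so transitivity of $\to$ gives $L_i \to H$, and $\t{ind}_K^H T = \bigsqcup_i H/L_i$ lies in $\c{I}_\to(H)$.

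Next I would verify that $\to_{\c{I}_\to}\, =\, \to$: the inclusion $\to\,\subseteq\,\to_{\c{I}_\to}$ is by construction, and conversely if $K \to_{\c{I}_\to} H$ then $H/K$ is a single orbit that decomposes as a coproduct of orbits of the form $H/K'$ with $K' \to H$, which forces $H/K \cong H/K'$ as $H$-sets and hence $K' = gKg^{-1}$ for some $g$ with $gKg^{-1} \subset H$; in particular this actually just forces $K = K'$ up to the natural presentation, so $K \to H$. The saturation clause is immediate: condition $(\Lambda)$ on $\c{I}_\to$ is literally the statement that for $K \subset L \subset H$ with $K \to H$ one has $K \to L$ and $L \to H$, which is condition $(e)$ on $\to$.

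The main obstacle I anticipate is the restriction axiom, since it requires the double coset decomposition together with both the conjugation and restriction closure properties of transfer systems working in tandem; self-induction, by contrast, reduces cleanly to transitivity. Everything else is essentially bookkeeping once the orbit decomposition perspective is in place.
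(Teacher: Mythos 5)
Your proposal is correct and follows essentially the same route as the paper's proof: uniqueness via the observation that an indexing system is determined by its orbits, the explicit coproduct-of-orbits construction, the double coset formula combined with conjugation and restriction closure for the restriction axiom, transitivity for self-induction, and the identification of saturation with condition $(\Lambda)$. The only imprecision is the claim that $H/K \cong H/K'$ "forces $K = K'$"; it only forces $K = hK'h^{-1}$ for some $h \in H$, after which closure of $\to$ under conjugation (with $hHh^{-1} = H$) gives $K \to H$, which is exactly the auxiliary observation the paper records before verifying self-induction.
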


\begin{proof}Fix a transfer system $\to$. If $\c{I}$ is an indexing system such that $\to_{\c{I}} \,\, = \,\, \to$, then the orbits of $\c{I}$ must be those $H/K$ such that $K \to H$, and $\c{I}$ must be the class of all finite coproducts of such orbits. Therefore $\c{I}$ is unique if it exists.

We check that this recipe works. Define
	\[
	\c{I}_{\to}(H) := \Bigg\{ \t{ finite $H$-sets } T \, \Bigg| \, \begin{array}{c}
		\t{there exist $n \geq 0$ and $K_1,\dots, K_n \subset H$ such that} \\
		\t{$T \cong \coprod_{i=1}^n H/K_i$ and $K_i \to H$ for $i = 1, \dots, n$}
		\end{array}
	\Bigg\},
	\]
where empty coproducts are understood to be $\varnothing$. We must check that $\c{I} = \c{I}_{\to}$ is a $G$-indexing system, and that $\to_{\c{I}} \,\, = \,\, \to$.

We verify the axioms in Definition \ref{defn:indsys}. Condition (1) holds because $\to$ is reflexive. Condition (2) holds because coproducts are only defined up to isomorphism. Condition (3) holds because if $T \cong \coprod_{i=1}^n H/K_i$ with $K_i \to H$, then for any $L \subset H$,
	\[
	\t{res}^H_L T \cong \coprod_{i=1}^n \t{res}^H_L H/K_i \cong \coprod_{i=1}^n \coprod_{a \in L \backslash H / K_i} L/(L \cap aK_i a^{-1}).
	\]
The right hand side is a finite coproduct, and if $K_i \to H$, then for any $a \in L \backslash H /K_i$, we have $(a K_i a^{-1}) \to (a H a^{-1}) = H$ and also $(L \cap a K_i a^{-1}) \to L$, because $\to$ is closed under conjugation and restriction. Condition (4) holds because if $T \cong \coprod_{i=1}^n H/K_i$, then $c_g T \cong \coprod_{i=1}^n gHg^{-1}/gK_i g^{-1}$, and $\to$ is closed under conjugation. Condition (5) holds because every subobject of $T \cong \coprod_i H/K_i \in \c{I}$, is still just a finite coproduct of orbits $H/K$ with $K \to H$. Similarly for condition (6).

Suppose that $H/K \in \c{I}$. Then $H/K \cong H/K'$ for some $K' \to H$. Therefore $K = hK' h^{-1}$ for some $h \in H$, and thus $K = h K' h^{-1} \to h H h^{-1} = H$. Condition (7) follows, because if $T \cong \coprod_{i=1}^n K/L_i \in \c{I}$ for some $L_i \to K$ and $H/K \in \c{I}$, then $K \to H$, and therefore $L_i \to H$ by transitivity. Thus, $\t{ind}_K^H T \cong \coprod_{i=1}^n H/L_i \in \c{I}$. This proves that $\c{I}$ is an indexing system, and it is easy to see that $\to_{\c{I}} \, = \, \to$.

Suppose the transfer system $\to$ is saturated. If $H/K \in \c{I}$ and $K \subset L \subset H$, then $K \to H$ as above, and therefore $K \to L \to H$. Hence $L/K, H/L \in \c{I}$, and hence $\c{I}$ is a $\Lambda$-indexing system. The converse is similar.
\end{proof}

In summary, we obtain the following result.

\begin{thm}\label{thm:transys} The maps $\to_{\bullet} \, : \b{Ind} \stackrel{}{\rightleftarrows} \b{Tr} : \c{I}_{\bullet}$ are inverse order isomorphisms, and they restrict to an isomorphism between the subposet of $\Lambda$-indexing systems and the subposet of saturated transfer systems.
\end{thm}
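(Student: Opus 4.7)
The plan is to assemble the theorem from the two propositions already in place. Propositions \ref{prop:indsystransys} and \ref{prop:transysindsys} together show that the assignments $\c{I} \mapsto \,\to_{\c{I}}\,$ and $\,\to\, \mapsto \c{I}_{\to}$ are well-defined as maps $\b{Ind} \to \b{Tr}$ and $\b{Tr} \to \b{Ind}$, and they also handle the ``$\Lambda$ vs.\ saturated'' compatibility. So what remains is (i) to check that the two composites are the identity, and (ii) to verify order-preservation in both directions.

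First I would address the composite $\,\to\, \mapsto \c{I}_{\to} \mapsto \,\to_{\c{I}_{\to}}\,$. This is the easy direction and is already explicitly noted at the end of the proof of Proposition \ref{prop:transysindsys}: by construction, $K \to_{\c{I}_\to} H$ iff $K \subset H$ and $H/K \in \c{I}_\to(H)$, and the orbits of $\c{I}_\to$ are exactly the $H/K$ with $K \to H$, so the two relations coincide. Next I would address the other composite $\c{I} \mapsto \,\to_{\c{I}}\, \mapsto \c{I}_{\,\to_\c{I}}$. The key point is that an indexing system is determined by its orbits: every finite $H$-set decomposes uniquely as a coproduct $\coprod_i H/K_i$, and closure under coproducts and subobjects forces $T \in \c{I}(H)$ iff each $H/K_i \in \c{I}(H)$. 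Thus $\c{I}(H) = \{\coprod_i H/K_i : K_i \to_\c{I} H\} = \c{I}_{\to_\c{I}}(H)$, where the second equality is the definition in Proposition \ref{prop:transysindsys}.

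For order-preservation, both maps are tautologically monotone once unpacked. If $\c{I}_1 \subseteq \c{I}_2$ and $K \to_{\c{I}_1} H$, then $H/K \in \c{I}_1(H) \subseteq \c{I}_2(H)$, so $K \to_{\c{I}_2} H$; hence $\to_{\bullet}$ is order-preserving. Conversely, if $\to_1 \,\leq\, \to_2$ and $T \in \c{I}_{\to_1}(H)$, write $T \cong \coprod_i H/K_i$ with $K_i \to_1 H$, so $K_i \to_2 H$ and $T \in \c{I}_{\to_2}(H)$; hence $\c{I}_{\bullet}$ is order-preserving. Since both composites are the identity, these maps are mutually inverse order isomorphisms.

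Finally, the restriction statement is immediate: Proposition \ref{prop:indsystransys}(e) shows that $\to_\c{I}$ is saturated whenever $\c{I}$ is a $\Lambda$-indexing system, and the last sentence of Proposition \ref{prop:transysindsys} gives the converse at the level of $\c{I}_\to$. Together with the inverse bijection just established, this gives a restricted isomorphism between $\Lambda$-indexing systems and saturated transfer systems. I expect no real obstacle here: every nontrivial ingredient has already been absorbed into Propositions \ref{prop:indsystransys} and \ref{prop:transysindsys}, and the remaining bookkeeping is purely formal once one observes that indexing systems are determined by their orbit content.
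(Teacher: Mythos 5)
Your proof is correct and follows essentially the same route as the paper: both use Propositions \ref{prop:indsystransys} and \ref{prop:transysindsys} to establish that $\to_\bullet$ and $\c{I}_\bullet$ are mutually inverse bijections (the uniqueness clause in Proposition \ref{prop:transysindsys} encodes exactly the composite identity you re-derive), and both check order-preservation by the same unwinding. The only cosmetic difference is that the paper's proof is terser, simply citing the two propositions for the inverse claim rather than spelling out both composites, and it leaves the $\Lambda$/saturated restriction to the reader as an immediate consequence of the final sentences of the two propositions, which you have made explicit.
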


\begin{proof} The set maps $\to_\bullet$ and $\c{I}_\bullet$ are inverse by Propositions \ref{prop:indsystransys} and \ref{prop:transysindsys}. We must check that they are order-preserving. Suppose that $\c{I} \subset \c{J}$. If $K \to_{\c{I}} H$, then $H/K \in \c{I} \subset \c{J}$, and therefore $K \to_{\c{J}} H$. Thus $\to_{\c{I}}$ refines $\to_{\c{J}}$. Conversely, if $\to_{\c{I}}$ refines $\to_{\c{J}}$,  then every orbit in $\c{I}$ is also contained in $\c{J}$. Therefore $\c{I} \subset \c{J}$, because $\c{I}$ is generated by its orbits.
\end{proof}

This makes precise the intuition that transfer systems are the sets of orbits in indexing systems.

We now consider the relationship between transfer systems and indexing categories, starting with a review of Blumberg and Hill's isomorphism $\b{Ind} \cong \b{IndCat}$. For any indexing system $\c{I}$, let $\b{Set}^G_{\c{I}} \subset \b{Set}^G_{fin}$ be the indexing category consisting of those $f : S \to T$ such that $G_{f(s)} / G_s \in \c{I}$ for every $s \in S$. Conversely, given any indexing category $\s{D} \subset \b{Set}_{fin}^G$, let $\c{I}_{\s{D}}$ be the indexing system whose admissible $H$-sets are those $T$ such that $T = p^{-1}(eH)$ for some $p : S \to G/H$ in $\s{D}$.\footnote{The indexing system $\c{I}_{\s{D}}$ is obtained from the construction in \cite[Lemma 3.18]{BH2} by composing with the equivalence $\b{Set}^G_{/G/H} \simeq \b{Set}^H$, and then taking object classes.}

\begin{thm}[\protect{\cite[Theorem 3.17]{BH2}}]\label{thm:BH2} The maps $\b{Set}^G_{\bullet} : \b{Ind} \rightleftarrows \b{IndCat} : \c{I}_{\bullet}$ are inverse lattice isomorphisms.
\end{thm}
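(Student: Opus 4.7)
My plan is to show both constructions are well-defined, then check they are mutually inverse, and finally verify compatibility with the lattice structure. The guiding observation is that both sides are ``generated'' by their behavior on maps out of orbits: an indexing category $\s{D}$ is determined by the orbit maps $H/K \to G/H$ it contains, and these correspond exactly to the admissible orbits $H/K$ in the associated indexing system.

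For well-definedness of $\c{I} \mapsto \b{Set}^G_{\c{I}}$, I would verify the three axioms of an indexing category. Width is immediate from the trivial-sets axiom of $\c{I}$, since identity maps have singleton fibers with trivial stabilizer action. Closure under composition corresponds to self-induction: if $f : S \to T$ and $g : T \to U$ are in $\b{Set}^G_{\c{I}}$, then for $s \in S$ the stabilizer chain $G_s \subset G_{f(s)} \subset G_{g(f(s))}$ gives $G_{g(f(s))}/G_s \cong \t{ind}_{G_{f(s)}}^{G_{g(f(s))}} G_{f(s)}/G_s$, which lies in $\c{I}$ by self-induction. Pullback stability uses closure under restriction, conjugation, and subobjects, exploiting the double-coset formula $\t{res}^H_L H/K \cong \coprod_{a \in L\backslash H/K} L/(L \cap aKa^{-1})$ already deployed in Proposition \ref{prop:transysindsys}. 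Finite coproduct completeness is immediate from the coproducts axiom. For the reverse direction, each indexing system axiom for $\c{I}_{\s{D}}$ translates directly: restriction and conjugation come from pullback stability along $G/K \to G/H$ and along automorphisms of $G$, subobjects from pullback along inclusions, coproducts from coproduct completeness, and self-induction from closure under composition in $\s{D}$.

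To show the maps are mutually inverse, I would use the fact that every object class in an indexing system is determined by its orbits and every indexing category is determined by the orbit maps it contains. Starting with $\c{I}$, a finite $H$-set $T$ lies in $\c{I}_{\b{Set}^G_{\c{I}}}(H)$ iff $T = p^{-1}(eH)$ for some $p : S \to G/H$ with all stabilizer quotients in $\c{I}$; decomposing $S$ into orbits and reading off fibers exhibits $T$ as a finite coproduct of admissible $H$-orbits of $\c{I}$, and conversely any such $T$ arises as the fiber of some map in $\b{Set}^G_{\c{I}}$. Starting with $\s{D}$, unwinding definitions shows a morphism $f$ lies in $\b{Set}^G_{\c{I}_{\s{D}}}$ iff each fiber (over some orbit representative) comes from $\s{D}$, and then closure of $\s{D}$ under composition, pullback, and coproducts assembles $f$ itself from such fibers.

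Finally, both $\b{Ind}$ and $\b{IndCat}$ are ordered by inclusion and both bijections clearly preserve inclusion in either direction, so the order-isomorphism automatically upgrades to a lattice isomorphism since meets and joins are determined by the order. The main obstacle is the bookkeeping in Step 2: verifying pullback stability of $\b{Set}^G_{\c{I}}$ requires carefully tracking fibers under the double-coset decomposition, and verifying self-induction for $\c{I}_{\s{D}}$ requires reconstructing a map from $\s{D}$ out of a pair of compatible fiber data, which is where the ``indexing category'' hypotheses on $\s{D}$ (as opposed to a mere subcategory) are used most forcefully.
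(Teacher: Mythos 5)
The paper offers no proof of this statement: it is cited directly from Blumberg and Hill, \cite[Theorem 3.17]{BH2}, and the paper's contribution in this section is only the reformulation via transfer systems (Corollary \ref{cor:transysBH2}), not a reproof of \cite{BH2}. So there is no in-paper argument against which to compare yours.

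That said, your sketch is sound and is the natural argument. The three verification points match what one would expect to find in \cite{BH2}: composition corresponds to self-induction via $G_{gf(s)}/G_s \cong \t{ind}_{G_{f(s)}}^{G_{gf(s)}} G_{f(s)}/G_s$, pullback stability corresponds to restriction plus conjugation plus subobjects via the double-coset decomposition (taking the coset containing $e$ and discarding the rest), and the ``mutually inverse'' step reduces to the observation that every admissible $T \in \c{I}(H)$ arises as the fiber of $G \times_H T \to G/H$ over $eH$, while an orbit decomposition of any fiber $p^{-1}(eH)$ shows it is a coproduct of admissible orbits. The one step worth writing out fully, as you flag, is recovering $\s{D}$ from $\c{I}_{\s{D}}$: showing $\b{Set}^G_{\c{I}_{\s{D}}} \subset \s{D}$ genuinely needs all three closure properties of $\s{D}$ in concert (pulling $f$ back to a coproduct of orbits in its source, showing each orbit map $G/G_s \to G/G_{f(s)}$ is in $\s{D}$, and re-assembling $f$ via coproducts and composition with monos), and ``finite coproduct complete'' in the definition of indexing category must be read so as to include those monos. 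Your closing observation that a bijective order isomorphism between lattices is automatically a lattice isomorphism is correct and dispenses with the meet/join check cleanly.
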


We obtain a composite isomorphism $\b{Tr} \cong \b{Ind} \cong \b{IndCat}$. Unwinding the definitions and simplifying yields the following formulas. For any transfer system $\rightsquigarrow \, \in \b{Tr}$, let $\b{Set}^G_{\rightsquigarrow} \in \b{IndCat}$ consist of those morphisms $f : S \to T$ in $\b{Set}^G_{fin}$ such that $G_s \rightsquigarrow G_{f(s)}$ for every $s \in S$. Conversely, for any $\s{D} \in \b{IndCat}$, let $\to_{\s{D}} \, \in \b{Tr}$ be the transfer system defined by
	\[
	K \to_{\s{D}} H	\quad\t{ if and only if }\quad K \subset H \t{ and } ( \pi : G/K \to G/H ) \in \s{D},
	\]
where $\pi$ is the canonical projection map $\pi(gK) = gH$.

\begin{cor}\label{cor:transysBH2} The lattice maps $\b{Set}^G_{\bullet} : \b{Tr} \rightleftarrows \b{IndCat} : {}\to_{\bullet}$ are inverse.
\end{cor}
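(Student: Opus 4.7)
The plan is to bootstrap off the isomorphisms already in hand. Theorem \ref{thm:transys} gives an order isomorphism $\c{I}_\bullet : \b{Tr} \cong \b{Ind}$ and Theorem \ref{thm:BH2} gives a lattice isomorphism $\b{Set}^G_\bullet : \b{Ind} \cong \b{IndCat}$, so composing them produces an inverse pair of lattice isomorphisms $\b{Tr} \cong \b{IndCat}$ automatically. All that remains is to check that this composite agrees with the explicit formulas in the corollary, which reduces to two unwindings of definitions.

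First, I would verify $\b{Set}^G_{\c{I}_\rightsquigarrow} = \b{Set}^G_\rightsquigarrow$ for every transfer system $\rightsquigarrow$. A morphism $f : S \to T$ lies in $\b{Set}^G_{\c{I}_\rightsquigarrow}$ iff $G_{f(s)}/G_s \in \c{I}_\rightsquigarrow$ for every $s \in S$. By Proposition \ref{prop:transysindsys}, an orbit $H/K$ is admissible for $\c{I}_\rightsquigarrow$ iff $H/K \cong H/K'$ for some $K' \rightsquigarrow H$; since $H/K' \cong H/K$ forces $K$ and $K'$ to be conjugate in $H$, closure of $\rightsquigarrow$ under conjugation makes this equivalent to $K \rightsquigarrow H$ itself. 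Specializing to $K = G_s$ and $H = G_{f(s)}$ recovers exactly the defining condition of $\b{Set}^G_\rightsquigarrow$.

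Second, I would verify $\to_{\c{I}_\s{D}} \,\, = \,\, \to_\s{D}$ for every indexing category $\s{D}$. The reverse inclusion is immediate: if the canonical projection $\pi : G/K \to G/H$ lies in $\s{D}$, then taking $S = G/K$ and $p = \pi$ realizes $H/K = \pi^{-1}(eH) \in \c{I}_\s{D}(H)$. For the forward direction, given any $p : S \to G/H$ in $\s{D}$ with $p^{-1}(eH) \cong H/K$, I would decompose $S$ into $G$-orbits $\coprod_i G/K_i$; the classification of $G$-maps between transitive $G$-sets shows that some summand $G/K_i$ must contribute a copy of $H/K$, forcing $K_i = hKh^{-1}$ for some $h \in H$. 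Because $\s{D}$ is wide and finite-coproduct complete, the summand inclusion $G/K_i \hookrightarrow S$ lies in $\s{D}$, and its composite with $p$ is a morphism of $\s{D}$ that, after the $G$-isomorphism $G/K \cong G/K_i$, coincides with the canonical projection $\pi$.

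The chief obstacle is this forward direction, since one must exploit coproduct completeness and composition closure of $\s{D}$ to recover the canonical $\pi$ from an arbitrary witness $p$, and then reconcile the conjugation ambiguity using the closure axioms for transfer systems. Once both composite identifications are established, the corollary follows formally from Theorems \ref{thm:transys} and \ref{thm:BH2}.
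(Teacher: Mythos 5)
Your route is exactly the paper's: the paper gives no separate argument for Corollary \ref{cor:transysBH2} beyond composing the isomorphisms of Theorem \ref{thm:transys} and Theorem \ref{thm:BH2} and ``unwinding the definitions,'' and your two identifications $\b{Set}^G_{\c{I}_\rightsquigarrow} = \b{Set}^G_\rightsquigarrow$ and $\to_{\c{I}_{\s{D}}} \, = \, \to_{\s{D}}$ are precisely that unwinding, carried out correctly. The only cosmetic slip is in the second verification: if $p$ sends the basepoint of the summand $G/K_i$ to $g_iH$, then $K_i = (g_ih)K(g_ih)^{-1}$ for some $h \in H$, so $K_i$ is $G$-conjugate rather than $H$-conjugate to $K$; this still yields a $G$-isomorphism $G/K \cong G/K_i$ intertwining $\pi$ with $p$ restricted to that summand, which is all your argument needs.
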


This makes precise the intuition that transfer systems are the intersection of indexing categories with the orbit category $\s{O}_G$.

There is a chain of equivalences
	\[
	\t{Ho}(N_\infty\t{-}\b{Op}) \simeq \b{Ind} \cong \b{IndCat} \cong \b{Tr},
	\]
and therefore these structures all contain the same information. It is easy to identify the essential group-theoretic data from the standpoint of transfer systems.

\begin{cor}\label{cor:transys} The lattices $\b{Ind}$, $\b{IndCat}$, and $\b{Tr}$, and the $1$-category $\t{Ho}(N_\infty\t{-}\b{Op})$ are determined by the lattice $\b{Sub}(G)$, together with the orbit space of 
	\[
	\subset_G  \,\, = \{(K,H) \in \b{Sub}(G)^{\times 2} \, | \, K \subset H\}
	\]
under the diagonal conjugation $G$-action.
\end{cor}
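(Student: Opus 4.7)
The plan is to combine the equivalences already established in the paper and then reduce to a purely formal unpacking of Definition \ref{defn:transys}. Theorem \ref{thm:classNinfty} gives $\t{Ho}(N_\infty\t{-}\b{Op}) \simeq \b{Ind}$, Theorem \ref{thm:BH2} gives $\b{Ind} \cong \b{IndCat}$, and Theorem \ref{thm:transys} gives $\b{Ind} \cong \b{Tr}$. Hence it suffices to check that the lattice $\b{Tr}(G)$ is recoverable from the pair consisting of the lattice $\b{Sub}(G)$ and the partition of $\subset_G$ into diagonal conjugation orbits.

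Once reduced to $\b{Tr}(G)$, the argument is a checklist against Definition \ref{defn:transys}. A $G$-transfer system is a subset $\to \,\subset\, \subset_G$ satisfying: refinement of $\subset$, which is automatic from $\to \,\subset\, \subset_G$; antisymmetry, which is automatic from the refinement; reflexivity and transitivity, which are first-order statements about pairs in $\subset_G$; closure under conjugation, which is precisely the condition that $\to$ is a union of diagonal conjugation orbits, equivalently the preimage of some subset of $\subset_G/G$; and closure under restriction, which requires $(K \cap L, L) \in \,\to\,$ whenever $(K, H) \in \,\to\,$ and $L \subset H$. The last condition needs the meet $K \cap L$, which is part of the lattice structure on $\b{Sub}(G)$. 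The refinement order on $\b{Tr}(G)$ itself is set inclusion of subsets of $\subset_G$, so it introduces no further dependence. Therefore every axiom defining $\b{Tr}(G)$ and its lattice order is expressible using only $(\b{Sub}(G), \subset_G/G)$.

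The only sanity check that is not a tautology is that restriction closure is genuinely compatible with conjugation closure, so that the axioms descend properly to the orbit level. This follows from the observation that restricting $(gKg^{-1}, gHg^{-1})$ along $gLg^{-1}$ yields the conjugate by $g$ of the restriction of $(K,H)$ along $L$, so the union-of-orbits and restriction conditions never conflict. This is the main, and essentially only, point to verify, after which the corollary follows by chaining the equivalences back to $\t{Ho}(N_\infty\t{-}\b{Op})$, $\b{Ind}$, and $\b{IndCat}$.
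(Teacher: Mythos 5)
Your proposal is correct and follows the same route the paper intends (the corollary is stated there without an explicit proof): reduce via the chain $\t{Ho}(N_\infty\t{-}\b{Op}) \simeq \b{Ind} \cong \b{IndCat} \cong \b{Tr}$ and then observe that every axiom in Definition \ref{defn:transys}, together with the refinement order, is expressible from the lattice $\b{Sub}(G)$ and the partition of $\subset_G$ into conjugation orbits. Your final ``sanity check'' is fine but not actually needed: since the axioms are imposed directly on union-of-orbit subsets of $\subset_G$ rather than on subsets of the orbit space, no descent compatibility has to be verified.
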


In particular, the lattice $\b{Sub}(G)$ determines everything if $G$ is finite abelian, or if all subgroups of $G$ are normal (e.g. if $G = Q_8$). In general, we must remember $\subset_G \!\! / G$ and not just the set $\b{Sub}(G)/G$ of conjugacy classes of subgroups, because the actions on the fibers of $\subset_G \,\,\twoheadrightarrow (\b{Sub}(G)/G)^{\times 2}$ need not be transitive.

\begin{ex}\label{ex:conjS4} Let $G = \Sigma_4$. There are three conjugate copies of $D_8$ in $\Sigma_4$, obtained by ordering the vertices of a square, and then taking the images of the associated permutation representations. There are three double-transpositions in $\Sigma_4$, which generate three conjugate copies of $C_2$. These subgroups determine a copy of the bipartite graph
	\[
	\begin{tikzpicture}[scale=1.3]
		\node(00) at (0,0) {$C_2$};
		\node(10) at (1,0) {$C_2'$};
		\node(20) at (2,0) {$C_2''$};
		\node(01) at (0,1) {$D_8$};
		\node(11) at (1,1) {$D_8'$};
		\node(21) at (2,1) {$D_8''$};
		\path[-]
		(00) edge node {} (01)
		(00) edge node {} (11)
		(00) edge node {} (21)
		(10) edge node {} (01)
		(10) edge node {} (11)
		(10) edge node {} (21)
		(20) edge node {} (01)
		(20) edge node {} (11)
		(20) edge node {} (21)
		;
	\end{tikzpicture}
	\]
in $\b{Sub}(\Sigma_4)$. For each copy of $D_8$, one inclusion of $C_2$ corresponds to the rotation by $\pi$, and the other two inclusions correspond to reflections. Without loss of generality, we may assume that the vertical inclusions above are the rotations. We obtain two conjugacy classes of edges:
	\[
	\begin{tikzpicture}[scale=1.3]
		\node(00) at (0,0) {$C_2$};
		\node(10) at (1,0) {$C_2'$};
		\node(20) at (2,0) {$C_2''$};
		\node(01) at (0,1) {$D_8$};
		\node(11) at (1,1) {$D_8'$};
		\node(21) at (2,1) {$D_8''$};
		\path[-]
		(00) edge node {} (01)
		%(00) edge node {} (11)
		%(00) edge node {} (21)
		%(10) edge node {} (01)
		(10) edge node {} (11)
		%(10) edge node {} (21)
		%(20) edge node {} (01)
		%(20) edge node {} (11)
		(20) edge node {} (21)
		;
		
		\node(00') at (4,0) {$C_2$};
		\node(10') at (5,0) {$C_2'$};
		\node(20') at (6,0) {$C_2''$};
		\node(01') at (4,1) {$D_8$};
		\node(11') at (5,1) {$D_8'$};
		\node(21') at (6,1) {$D_8''$};
		\path[-]
		%(00') edge node {} (01')
		(00') edge node {} (11')
		(00') edge node {} (21')
		(10') edge node {} (01')
		%(10') edge node {} (11')
		(10') edge node {} (21')
		(20') edge node {} (01')
		(20') edge node {} (11')
		%(20') edge node {} (21')
		;
		
		\node at (3,0.5) {and};
	\end{tikzpicture} .
	\]
Thus, to specify a $\Sigma_4$-transfer system, it is not enough to declare $[C_2] \to [D_8]$. We must also know which copies of $C_2$ are related to which copies of $D_8$.
\end{ex}

\subsection{Examples of transfer systems}\label{subsec:exts}

We now describe the lattice $\b{Tr}(G)$ for a few small groups $G$. These examples illustrate how our formalism works, and they will be useful in the upcoming discussion of Steiner and linear isometries operads.

The lattice $\b{Tr}(G)$ is determined by $\b{Sub}(G)$, equipped with the conjugation $G$-action. Thus, we focus on groups with small subgroup lattices. We start with the case of a tower. Balchin, Barnes, and Roitzheim have proven a marvelous theorem. Using a clever inductive argument, they show that $\b{Tr}(C_{p^n})$ is isomorphic to the $(n+1)$st associahedron for any prime $p$ and integer $n \geq 0$. To give the idea, we draw $\b{Tr}(C_{p^n})$ for $0 \leq n \leq 3$ in Figure \ref{fig:Cpn} (p. \pageref{fig:Cpn}), but we heartily recommend their paper for the general argument.

\begin{figure}
	\caption{}
	\label{fig:Cpn}
	\[
	\begin{array}{c}
	\begin{tikzpicture}[scale=1.25]
		\node(01) at (0,1) {$C_1$};
		\node(11) at (0,2) {$C_p$};
		\path[-]
		(01) edge node {} (11)
		;
		
		\node(A1) at (1.5,1) {$\cpa$};
		\node(B1) at (1.5,2) {$\cpb$};
		\path[-]
		(A1) edge node {} (B1)
		;
		
		\node at (0,3.75) {$\b{Sub}(C_{p^1})$};
		\node at (1.5,3.75) {$\b{Tr}(C_{p^1})$};
		
		\node(00) at (-3,1.5) {$C_1$};
		\node(A0) at (-1.5,1.5) {$\cdot$};
		
		\node at (-3,3.75) {$\b{Sub}(C_{p^0})$};
		\node at (-1.5,3.75) {$\b{Tr}(C_{p^0})$};
		
		\node(02) at (3,0.5) {$C_1$};
		\node(12) at (3,1.5) {$C_p$};
		\node(22) at (3,2.5) {$C_{p^2}$};
		
		\path[-]
		(02) edge node {} (12)
		(12) edge node {} (22)
		(02) edge [bend right] node {} (22)
		;
		
		\node(A2) at (4.5,0) {$\cppa$};
		\node(B2) at (4.5,1) {$\cppb$};
		\node(C2) at (4.5,2) {$\cppc$};
		\node(D2) at (5.5,1) {$\cppd$};
		\node(E2) at (5.5,3) {$\cppe$};
		
		\path[-]
		(A2) edge node {} (B2)
		(B2) edge node {} (C2)
		(A2) edge node {} (D2)
		(C2) edge node {} (E2)
		(D2) edge node {} (E2)
		;
		
		\node at (3,3.75) {$\b{Sub}(C_{p^2})$};
		\node at (5,3.75) {$\b{Tr}(C_{p^2})$};
		
		\draw (-0.75,-0.25) -- (-0.75,4);
		\draw (2.25,-0.25) -- (2.25,4);
		\draw (-3.75,3.5) -- (5.75,3.5);
	\end{tikzpicture}
	\\
	\begin{tikzpicture}[scale=1.4]
		\node(A) at (0,0) {$\cpppa$};
		
		\node(B) at (0,1) {$\cpppb$};
		
		\node(C) at (0,2) {$\cpppc$};
		\node(D) at (1,1) {$\cpppd$};
		\node(E) at (1,3) {$\cpppe$};
		
		\node(F) at (0,3) {$\cpppf$};
		\node(G) at (1,4) {$\cpppg$};
		\node(H) at (2,2) {$\cppph$};
		\node(I) at (2,5) {$\cpppi$};
		
		\node(J) at (-3,1) {$\cpppj$};
		\node(K) at (-1,3) {$\cpppk$};
		\node(L) at (-3,2) {$\cpppl$};
		\node(M) at (-3,4) {$\cpppm$};
		\node(N) at (-1,6) {$\cpppn$};
		
		\path[-]
		(A) edge node {} (B)
		
		(B) edge node {} (C)
		(A) edge node {} (D)
		(C) edge node {} (E)
		(D) edge node {} (E)
		
		(C) edge node {} (F)
		(E) edge node {} (G)
		(F) edge node {} (G)
		
		(D) edge node {} (H) 
		(G) edge node {} (I)
		(H) edge node {} (I)
		
		(A) edge node {} (J)
		(H) edge node {} (K)
		(J) edge node {} (K)
		(J) edge node {} (L)
		(B) edge node {} (L)
		(L) edge node {} (M)
		(F) edge node {} (M)
		(M) edge node {} (N)
		(K) edge node {} (N)
		(I) edge node {} (N)
		;
		
		\node(0) at (-5.5,2) {$C_1$};
		\node(1) at (-5.5,3) {$C_p$};
		\node(2) at (-5.5,4) {$C_{p^2}$};
		\node(3) at (-5.5,5) {$C_{p^3}$};
		
		\path[-]
		(0) edge node {} (1)
		(1) edge node {} (2)
		(2) edge node {} (3)
		
		(0) edge [bend right] node {} (2)
		(0) edge [bend right] node {} (3)
		(1) edge [bend left] node {} (3)
		;
		
		\node at (-5.5,6.75) {$\b{Sub}(C_{p^3})$};
		\node at (-0.5,6.75) {$\b{Tr}(C_{p^3})$};
		
		\draw (-6.5,6.5) -- (2.25,6.5);
	\end{tikzpicture}
	\end{array}
	\]
\end{figure}

Next, we generalize orthogonally. The lattice $\b{Sub}(C_{p^2})$ is a three-tiered tower, and in Figure \ref{fig:CpqK4} (p. \pageref{fig:CpqK4}), we show what happens as the number of intermediate subgroups increases. We start with $C_{pq}$, where $p < q$ are prime, and the Klein four group $K_4$. Write $K_4 = \{1,a,b,c\}$, where $1$ is the identity and $ab = c$.

\begin{figure}
	\caption{}
	\label{fig:CpqK4}	
	\[
	\begin{array}{c}
	\begin{tikzpicture}[scale=1.1]
		\node(s) at (0,1.5) {$C_1$};
		\node(n) at (0,3.5) {$C_{pq}$};
		\node(e) at (-1,2.5) {$C_p$};
		\node(w) at (1,2.5) {$C_q$};
		
		\path[-]
		(s) edge node {} (w)
		(s) edge node {} (e)
		(w) edge node {} (n)
		(e) edge node {} (n)
		(s) edge node {} (n)
		;
		
		\node(A) at (4,0) {$\indt$};
		
		\node(B) at (3,1) {$\indp$};
		
		\node(C) at (5,1) {$\indq$};
		\node(D) at (4,2) {$\indpq$};
		
		\node(E') at (4,3) {$\indpqf$};
		
		\node(F) at (5,2.5) {$\indpqp$};
		\node(G) at (5,4) {$\indpqfp$};
		
		\node(H) at (3,2.5) {$\indpqq$};
		\node(I) at (3,4) {$\indpqfq$};
		\node(J) at (4,5) {$\inds$};
		
		\path[-]
		(A) edge node {} (B)
		
		(A) edge node {} (C)
		(B) edge node {} (D)
		(C) edge node {} (D)
		
		(D) edge node {} (E')
		
		(C) edge node {} (F)
		(F) edge node {} (G)
		(E') edge node {} (G)
		
		(B) edge node {} (H)
		(H) edge node {} (I)
		(E') edge node {} (I)
		(I) edge node {} (J)
		(G) edge node {} (J)
		;
		
		\node(qA) at (8,0) {$\indt$};
		\node(qB) at (8,1) {$\indq$};
		\node(qD) at (7,2) {$\indpq$};
		\node(qE) at (7,3) {$\indpqf$};
		\node(qF) at (8,2.5) {$\indpqp$};
		\node(qG) at (8,4) {$\indpqfp$};
		\node(qJ) at (8,5) {$\inds$};
		
		\path[-]
		(qA) edge node {} (qB)
		(qB) edge node {} (qD)
		(qD) edge node {} (qE)
		(qB) edge node {} (qF)
		(qE) edge node {} (qG)
		(qF) edge node {} (qG)
		(qG) edge node {} (qJ)
		;
		
		\node at (0,5.75) {$\b{Sub}(C_{pq})$};
		\node at (4,5.75) {$\b{Tr}(C_{pq})$};
		\node at (7.5,5.75) {$\b{Tr}(C_{pq})/\Sigma_2$};
		
		\draw (-1,5.5) -- (8.5,5.5) ;
	\end{tikzpicture}
	\\
	\begin{tikzpicture}[scale=1.1]
		\node(00) at (0,0) {$\kindt$};
		\node(-11) at (-2,1.5) {$\kinda$};
		\node(01) at (0,1.5) {$\kindb$};
		\node(11) at (2,1.5) {$\kindc$};
		\node(-12) at (-2,3) {$\kindab$};
		\node(02) at (0,3) {$\kindac$};
		\node(12) at (2,3) {$\kindbc$};
		\node(h2h) at (1,3.75) {$\kindabc$};
		\node(-13) at (-2,4.5) {$\kindkc$};
		\node(03) at (0,4.5) {$\kindkb$};
		\node(13) at (2,4.5) {$\kindka$};
		\node(h3h) at (1,5.25) {$\kindkt$};
		\node(-14) at (-2,6) {$\kindkct$};
		\node(04) at (0,6) {$\kindkbt$};
		\node(14) at (2,6) {$\kindkat$};
		\node(-15) at (-2,7.5) {$\kindkbc$};
		\node(05) at (0,7.5) {$\kindkac$};
		\node(15) at (2,7.5) {$\kindkab$};
		\node(06) at (0,9) {$\kinds$};
		
		\path[-]
		(00) edge node {} (-11)
		(00) edge node {} (01)
		(00) edge node {} (11)
		(-11) edge node {} (-12)
		(-11) edge node {} (02)
		(01) edge node {} (-12)
		(01) edge node {} (12)
		(11) edge node {} (02)
		(11) edge node {} (12)
		(-12) edge node {} (h2h)
		(02) edge node {} (h2h)
		(12) edge node {} (h2h)
		(-12) edge node {} (-13)
		(02) edge node {} (03)
		(12) edge node {} (13)
		(h2h) edge node {} (h3h)
		(-13) edge node {} (-14)
		(03) edge node {} (04)
		(13) edge node {} (14)
		(h3h) edge node {} (-14)
		(h3h) edge node {} (04)
		(h3h) edge node {} (14)
		(-14) edge node {} (-15)
		(-14) edge node {} (05)
		(04) edge node {} (-15)
		(04) edge node {} (15)
		(14) edge node {} (05)
		(14) edge node {} (15)
		(-15) edge node {} (06)
		(05) edge node {} (06)
		(15) edge node {} (06)
		;
		
		\node(nN) at (-4.5,5.5) {$K_4$};
		\node(nC) at (-4.5,4.5) {$\la b \ra$};
		\node(nW) at (-5.5,4.5) {$\la a \ra$};
		\node(nE) at (-3.5,4.5) {$\la c \ra$};
		\node(nS) at (-4.5,3.5) {$1$};
		
		\path[-]
		(nS) edge node {} (nW)
		(nS) edge node {} (nC) 
		(nS) edge node {} (nE)
		(nW) edge node {} (nN)
		(nC) edge node {} (nN)
		(nE) edge node {} (nN)
		(nS) edge [bend left] node {} (nN)
		;
		
		\node(q00) at (4.5,0) {$\kindt$};
		\node(q22) at (4.5,1.5) {$\kindc$};
		\node(q24) at (4.5,3) {$\kindbc$};
		\node(q15) at (3.5,3.75) {$\kindabc$};
		\node(q26) at (4.5,4.5) {$\kindka$};
		\node(q17) at (3.5,5.25) {$\kindkt$};
		\node(q28) at (4.5,6) {$\kindkat$};
		\node(q29) at (4.5,7.5) {$\kindkab$};
		\node(q010) at (4.5,9) {$\kinds$};
		
		\path[-]
		(q00) edge node {} (q22)
		(q22) edge node {} (q24)
		(q24) edge node {} (q15)
		(q15) edge node {} (q17)
		(q24) edge node {} (q26)
		(q17) edge node {} (q28)
		(q26) edge node {} (q28)
		(q28) edge node {} (q29)
		(q29) edge node {} (q010)
		;
		
		\node at (0,9.75) {$\b{Tr}(K_4)$};
		\node at (-4.5,9.75) {$\b{Sub}(K_4)$};
		\node at (4,9.75) {$\b{Sub}(K_4)/\Sigma_3$};
		
		\draw (-5.75,9.5) -- (5.25,9.5) ;
	\end{tikzpicture}
	\end{array}
	\]
\end{figure}

The pentagons that show up in $\b{Tr}(C_{pq})$ and $\b{Tr}(K_4)$ are copies of the pentagon that appears in $\b{Tr}(C_{p^2})$. More generally, suppose that $G$ is a finite abelian group with $n$ proper, nontrivial subgroups that are pairwise incomparable. Then $\b{Tr}(G)$ decomposes as a stacked pair of $n$-cubes with a layer of $n$ transfer systems between them. Thus, if $G = (C_p)^{\times 2}$ for a prime $p$, then there are $p+1$ intermediate subgroups and $2^{p+2} + p + 1$ transfer systems.

Now consider the quaternion group $Q_8 = \{ \pm 1 , \pm i , \pm j , \pm k \}$. Its subgroup lattice is obtained from the tower $\b{Sub}(C_{p^3})$ by widening the upper two links into a copy of $\b{Sub}(K_4)$. Accordingly, the lattice $\b{Tr}(Q_8)$ exhibits features of both $\b{Tr}(C_{p^3})$ and $\b{Tr}(K_4)$, but the mixing is nontrivial. There are $68$ total $Q_8$-transfer systems, and the group $\b{Out}(Q_8) \cong \Sigma_3$ acts on $\b{Tr}(Q_8)$ because all subgroups of $Q_8$ are normal. As a $\Sigma_3$-poset, $\b{Tr}(Q_8)$ is a sum $\Sigma_3/1 + 17 \cdot \Sigma_3/\la (12) \ra + 11 \cdot \Sigma_3/\Sigma_3$ of $29$ orbits, and we draw the quotient in Figure \ref{fig:Q8} (p. \pageref{fig:Q8}).

\begin{figure}
\caption{}
\label{fig:Q8}
\[
\begin{tikzpicture}[scale=1.25]
	\node(A) at (-6.25,1) {$\qeia$};
	
	\node(B) at (-5,2) {$\qeib$};
	\node(C) at (-3.75,3) {$\qeic$};
	\node(D) at (-2.5,4) {$\qeid$};
	\node(E) at (-1.25,5) {$\qeie$};
	\node(F) at (0,6) {$\qeif$};
	
	\node(G) at (-3.75,4) {$\qeig$};
	\node(H) at (-2.5,5) {$\qeih$};
	\node(I) at (-1.25,6) {$\qeii$};
	\node(J) at (0,7) {$\qeij$};
	
	\node(K) at (-2.5,6) {$\qeik$};
	\node(L) at (-1.25,7) {$\qeil$};
	\node(M) at (0,8) {$\qeim$};
	\node(N) at (0.625,9.5) {$\qein$};
	\node(O) at (0,12) {$\qeio$};
	\node(P) at (0,13) {$\qeip$};
	\node(Q) at (0,14) {$\qeiq$};
	
	\node(R) at (-1.875,8.5) {$\qeir$};
	\node(S) at (-0.625,9.5) {$\qeis$};
	\node(T) at (-0.625,10.5) {$\qeit$};
	
	\node(U) at (-1.875,7.5) {$\qeiu$};
	
	\node(V) at (-6.875,4.5) {$\qeiv$};
	\node(W) at (-6.875,5.5) {$\qeiw$};
	
	\node(X) at (-6.25,2) {$\qeix$};
	\node(Y) at (-6.25,3) {$\qeiy$};
	\node(Z) at (-5.625,4.5) {$\qeiz$};
	\node(ZA) at (-6.25,7) {$\qeiza$};
	\node(ZB) at (-6.25,8) {$\qeizb$};
	\node(ZC) at (-6.25,9) {$\qeizc$};
	
	\path[-]
	(A) edge node {} (B)
	
	(B) edge node {} (C)
	(C) edge node {} (D)
	(D) edge node {} (E)
	(E) edge node {} (F)
	
	(C) edge node {} (G)
	(D) edge node {} (H)
	(E) edge node {} (I)
	(F) edge node {} (J)
	
	(G) edge node {} (H)
	(H) edge node {} (I)
	(I) edge node {} (J)
	
	(H) edge node {} (K)
	(I) edge node {} (L)
	(J) edge node {} (M)
	
	(K) edge node {} (L)
	(L) edge node {} (M)
	(M) edge node {} (N)
	(N) edge node {} (O)
	(O) edge node {} (P)
	(P) edge node {} (Q)
	
	(L) edge node {} (R)
	(M) edge node {} (S)
	(T) edge node {} (O)
	
	(R) edge node {} (S)
	(S) edge node {} (T)
	
	(K) edge node {} (U)
	(U) edge node {} (N)
	
	(V) edge node {} (R)
	(W) edge node {} (T)
	
	(V) edge node {} (W)
	
	(A) edge node {} (X)
	(X) edge node {} (Y)
	(Y) edge node {} (Z)
	(Z) edge node {} (ZA)
	(ZA) edge node {} (ZB)
	(ZB) edge node {} (ZC)
	
	(X) edge node {} (G)
	(Y) edge node {} (K)
	(Z) edge node {} (U)
	(ZA) edge node {} (O)
	(ZB) edge node {} (P)
	(ZC) edge node {} (Q)
	
	(Y) edge node {} (V)
	(W) edge node {} (ZA)
	;
	
	\node(n1) at (-6.25,11) {$1$};
	\node(n-1) at (-6.25,12) {$\la -1 \ra$};
	\node(ni) at (-7.5,13) {$\la i \ra$};
	\node(nj) at (-6.25,13) {$\la j \ra$};
	\node(nk) at (-5,13) {$\la k \ra$};
	\node(nq) at (-6.25,14) {$Q_8$};
	
	\path[-]
	(n1) edge node {} (n-1)
	(n-1) edge node {} (ni)
	(n-1) edge node {} (nj)
	(n-1) edge node {} (nk)
	(ni) edge node {} (nq)
	(nj) edge node {} (nq)
	(nk) edge node {} (nq)
	(n1) edge [bend left] node {} (ni)
	(n1) edge [bend right] node {} (nj)
	(n1) edge [bend right] node {} (nk)
	(n-1) edge [bend left] node {} (nq)
	;
	\draw plot [smooth, tension=1.5] coordinates {(-6,11) (-4.25,12.5) (-6,14)};
	
	\node at (-6.25,14.75) {$\b{Sub}(Q_8)$};
	\node at (0,14.75) {$\b{Tr}(Q_8)/\Sigma_3$};
	\draw (-8,14.5) -- (1.75,14.5) ;
\end{tikzpicture}
\]
\end{figure}

There is an evident copy of $\b{Tr}(K_4)/\Sigma_3$ on the left edge of $\b{Tr}(Q_8)/\Sigma_3$. As we move to the right, partially grown copies sprout up from the bottom, and we end with another fully grown copy of $\b{Tr}(K_4)/\Sigma_3$ on the right. There is also a copy of the associahedron $\b{Tr}(C_{p^3})$ in $\b{Tr}(Q_8)$, spanned by the $Q_8$-transfer systems below.
	\[
	\qeia \, \qeib \, \qeie \, \qeiv \, \qeir \, \qeif \, \qeis \, \qeiw \, \qeit \, \qeiz \, \qeiu \, \qein \, \qeizc \, \qeiq
	\]

So far, we have only studied groups for which every subgroup is normal. We consider $G = \Sigma_3$ in Figure \ref{fig:S3} (p. \pageref{fig:S3}) for a change.

\begin{figure}
	\caption{}
	\label{fig:S3}
	\begin{tikzpicture}[scale=1.25]
		\node(A) at (0,0) {$\dsia$};
		\node(B) at (0,1.25) {$\dsib$};
		\node(C') at (1.5,1.25) {$\dsic$};
		\node(D) at (1.5,2.5) {$\dsid$};
		\node(E') at (1.5,3.75) {$\dsie$};
		\node(F) at (1.5,5) {$\dsif$};
		\node(G) at (-1.5,2.5) {$\dsig$};
		\node(H) at (0,3.75) {$\dsih$};
 		\node(I) at (0,6) {$\dsii$};
		\path[-]
		(A) edge node {} (B)
		
		(A) edge node {} (C')
		(B) edge node {} (D)
		(C') edge node {} (D)
		
		(D) edge node {} (E')
		(E') edge node {} (F)
		(B) edge node {} (G)
		(D) edge node {} (H)
		(G) edge node {} (H)
		(H) edge node {} (I)
		(F) edge node {} (I)
		;
		
		\node(ns) at (-5.5,1.75) {$1$};
		\node(nw) at (-7.5,3) {$\la (12) \ra$};
		\node(nc) at (-5.5,3) {$\la (13) \ra$};
		\node(ne) at (-3.5,3) {$\la (23) \ra$};
		\node(nn) at (-5.5,4.25) {$\Sigma_3$};
		\node(nc') at (-4.6,3.25) {$\la (123) \ra$};
		
		\path[-]
		(ns) edge node {} (nw)
		(ns) edge node {} (nc)
		(ns) edge node {} (ne)
		(ns) edge node {} (nc')
		(ns) edge [bend left=40] node {} (nn)
		(nw) edge node {} (nn)
		(nc) edge node {} (nn)
		(ne) edge node {} (nn)
		(nc') edge node {} (nn)
		;
		
		\node at (0,7) {$\b{Tr}(\Sigma_3)$};
		\node at (-5.5,7) {$\b{Sub}(\Sigma_3)$};
		
		\draw (-8,6.75) -- (2,6.75) ;
	\end{tikzpicture}
\end{figure}

The group $\Sigma_3$ has four proper, nontrivial subgroups, generated by the transpositions and a three cycle. The former copies of $C_2$ are conjugate, and the latter copy of $C_3$ is normal. In some respects, this allows us to treat all copies of $C_2$ as the same subgroup. For example, if $1 \to \la (12) \ra$, then $1 \to \la \tau \ra$ for every transposition $\tau$, and dually if $\la (12) \ra \to \Sigma_3$. However, we must remember that $\la (12) \ra$, $\la (13) \ra$, and $\la (23) \ra$ are distinct subgroups. The $\Sigma_3$-transfer system generated by $\la (12) \ra \to \Sigma_3$ contains $1 \to \Sigma_3$ because it contains $\la (23) \ra \to \Sigma_3$ by conjugating, and $1 = \la (12) \ra \cap \la (23) \ra \to \la (12) \ra$ by restricting (cf. Example \ref{ex:genS3ts}). This is in sharp contrast to the $C_{pq}$-transfer system generated by $C_p \to C_{pq}$ or the $(C_3)^{\times 2}$-transfer system generated by a single relation of the form $C_3 \to (C_3)^{\times 2}$.

More generally, if $G = D_{2p}$ for a prime $p > 2$, then the set of proper nontrivial subgroups of $G$ consists of $p$ conjugate copies of $C_2$, and one normal copy of $C_p$. One finds that $\b{Tr}(D_{2p}) \cong \b{Tr}(\Sigma_3)$, by the same count.

\section{Steiner operads}\label{sec:Steiner}

In this section, we continue Blumberg and Hill's analysis of equivariant Steiner operads. We identify the $G$-transfer systems that arise from Steiner operads in general (Theorem \ref{thm:discind}), and then we specialize to finite abelian groups (Theorem \ref{thm:discfinab}). In the latter case, we show how to construct a minimal universe $U$ such that $\c{K}(U)$ parametrizes a specified transfer map (Proposition \ref{prop:TadmU}).

\subsection{General finite groups} Suppose that $U$ is a $G$-universe and consider the Steiner operad $\c{K}(U)$. If $K \subset H \subset G$ are subgroups, then by \cite[Theorem 4.19]{BH}, 
	\[
	K \to_{\c{K}(U)} H \quad\t{if and only if}\quad H/K \t{ $H$-embeds into $\t{res}^G_H U$} .
	\]
We begin our analysis by showing $\to \,\, = \,\, \to_{\c{K}(U)}$ is completely determined by transfer relations $K \to G$ such that the target is all of $G$.

Identify a binary relation $R$ on a set $X$ with the set $\{ (x,y) \in X^{\times 2} \, | \, x R y\}$ of all $R$-related pairs. Thus $x R y$ means $(x,y) \in R$, and $R$ refines $S$ if and only if $R \subset S$. If $R$ is any binary relation on $\b{Sub}(G)$ that refines inclusion, then there is minimum transfer system $\to \,\, = \la R \ra$ that contains $R$. Abstractly, $\to$ is the intersection of all transfer systems that contain $R$, but we give an explicit construction in Appendix \ref{sec:genindsys}. We call $\la R \ra$ the transfer system generated by $R$.

\begin{lem}\label{lem:discGorb} Suppose that $U$ is a $G$-universe, and let $\to \,\, = \,\, \to_{\c{K}(U)}$. Then $\to$ is generated by $\{ (K,G) \, | \, K \subset G \t{ and } K \to G \}$.
\end{lem}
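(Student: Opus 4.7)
Let $\to \,\, = \,\, \to_{\c{K}(U)}$ and write $\to'$ for the transfer system generated by $R := \{(K,G) \, | \, K \subset G \t{ and } K \to G\}$. The inclusion $\to' \, \subset \, \to$ is automatic, since $\to$ is a transfer system containing $R$. The plan is to prove the reverse inclusion by a one-line restriction argument, after extracting the right point of $U$.

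The key translation is the following. By \cite[Theorem 4.19]{BH}, a relation $K \to H$ is equivalent to the existence of an $H$-equivariant embedding $H/K \hookrightarrow \t{res}^G_H U$, which in turn is equivalent to the existence of a point $v \in U$ whose $H$-stabilizer satisfies $H \cap G_v = K$ (the embedding sends $hK \mapsto hv$). The trick is that such a $v$ automatically carries more information: its full $G$-stabilizer $K' := G_v$ is a subgroup of $G$ with $K' \cap H = K$, and the $G$-map $gK' \mapsto gv$ is an embedding $G/K' \hookrightarrow U$. By \cite[Theorem 4.19]{BH} again, this means $K' \to G$, so $(K',G) \in R$ and therefore $K' \to' G$.

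Finally, since $\to'$ is a transfer system, it is closed under restriction (Proposition \ref{prop:indsystransys}(d)), so $K' \cap H \to' H$, i.e., $K \to' H$. This gives $\to \,\, \subset \,\, \to'$ and completes the proof.

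The only substantive step is the observation that an $H$-orbit sitting inside $\t{res}^G_H U$ can always be enlarged to the full $G$-orbit of one of its points, which then lies in $U$ itself; everything else is bookkeeping with the transfer system axioms. I do not anticipate a genuine obstacle.
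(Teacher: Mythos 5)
Your proposal is correct and is essentially identical to the paper's proof: both take the point $v=\vp(eK)$ of the embedded $H$-orbit, observe that its full $G$-stabilizer $G_v$ satisfies $G_v\cap H=K$ and that the $G$-orbit of $v$ gives a $G$-embedding $G/G_v\hookrightarrow U$, hence $G_v\to G$, and then conclude $K\to' H$ by closure under restriction. No differences worth noting.
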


\begin{proof} Let $\rightsquigarrow$ be the $G$-transfer system generated by $\{ (K,G) \, | \, K \subset G \t{ and } K \to G \}$. Then $\rightsquigarrow$ refines $\to$ by definition. We must establish the other refinement.

Suppose $K \to H$, choose an $H$-embedding $\vp : H/K \hookrightarrow \t{res}^G_H U$, and let $x = \vp(eK) \in U$. Then $K = H_x = G_x \cap H$, and there is a $G$-embedding $G/G_x \hookrightarrow U$. Therefore $G_x \to G$, which implies $G_x \rightsquigarrow G$, and restricting along $H \subset G$ shows that $K = G_x \cap H \rightsquigarrow H$. Therefore $\to$ refines $\rightsquigarrow$.
\end{proof}

\begin{ex}\label{ex:Ggentr} There are plenty of transfer systems $\to$ such that the refinement $ \la (K,G) \, | \, K \subset G \t{ and } K \to G \ra  \leq \, \to$ is an equality, and plenty such that it is not. If $G = K_4$, then we have an equality for the $\Sigma_3 = \b{Out}(K_4)$-orbits of
	\[
	\kindt \, \kindkc \, \kindkt \, \kindkct \, \kindkbc \, \kinds
	\]
and an inequality for the orbits of
	\[
	\kinda \, \kindab \, \kindabc .
	\]
\end{ex}

Lemma \ref{lem:discGorb} and Proposition \ref{prop:BHcond} imply that a large class of transfer systems are not realized by Steiner or linear isometries operads.

\begin{thm}\label{thm:notSteinerorlinisom} Suppose $G$ is a finite group, $K \subset G$ is a normal subgroup, and $K \subsetneq L \subsetneq H \subsetneq G$ is a chain in $\b{Sub}(G)$. Then the $G$-transfer system $\la (K,H) \ra$ generated by $(K,H)$ is not realized by a $G$-Steiner or a $G$-linear isometries operad.
\end{thm}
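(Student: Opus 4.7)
The plan is to analyze the transfer system $\to \, := \la (K,H) \ra$ explicitly and show that it is so sparse that it violates both the Steiner obstruction from Lemma \ref{lem:discGorb} and condition $(\Lambda)$ from Proposition \ref{prop:BHcond}.

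First I would enumerate the relations forced by the closure conditions of Definition \ref{defn:transys} when we start from the single non-reflexive pair $K \to H$. Because $K$ is normal, conjugation by $g \in G$ gives $K \to gHg^{-1}$ (and no new sources). Applying restriction to these yields $(K \cap L') \to L'$ for each $L' \subset gHg^{-1}$. Transitivity produces nothing new: every source of a nontrivial pair so far is a subgroup of $K$, whereas every target is a subgroup of some $gHg^{-1}$, so no composable chain passes through an intermediate vertex that is both the target of one relation and the source of another (intermediate targets that land inside $K$ yield only reflexive restrictions). Hence the resulting family is already a transfer system, and so $\to \, = \la (K,H) \ra$ consists precisely of reflexive pairs together with relations $K' \to L'$ where $K' \subset K$ and $L' \subset gHg^{-1}$ for some $g$. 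A small bookkeeping argument (made rigorous via Construction \ref{constr:transysgen}, which I can appeal to since it appears in the appendix) confirms this.

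From this description, two features are evident: (i) no nontrivial relation in $\to$ has target equal to $G$, because $gHg^{-1} \subsetneq G$; and (ii) no nontrivial relation has source equal to $L$, since $L \not\subset K$ (as $K \subsetneq L$). Step (i) rules out realization by a Steiner operad. Indeed, if $\to \, = \, \to_{\c{K}(U)}$, then Lemma \ref{lem:discGorb} would express $\to$ as the transfer system generated by its pairs with target $G$; by (i), those pairs are only the reflexive $(G,G)$, so $\to$ would be the trivial transfer system, contradicting the nontrivial relation $K \to H$.

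Step (ii) rules out realization by a linear isometries operad. Proposition \ref{prop:BHcond} tells us that $\to_{\c{L}(U)}$ is saturated, so condition $(\Lambda)$ applied to the relation $K \to H$ and the intermediate subgroup $L$ with $K \subsetneq L \subsetneq H$ would force $L \to H$ to lie in $\to$. But $L \to H$ is nontrivial with source $L \not\subset K$, contradicting (ii). The main obstacle in carrying out this plan is the careful verification that the set of relations described above really is closed under all transfer system operations (in particular transitivity), which is where I would lean on the explicit generation algorithm in Appendix \ref{sec:genindsys} rather than re-derive it by hand.
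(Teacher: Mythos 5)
Your proposal is correct and follows essentially the same route as the paper: the explicit description of $\la (K,H)\ra$ you sketch (conjugates, then restrictions, with transitivity adding nothing) is exactly Proposition \ref{prop:multitsKnorm}, which the paper cites directly, and your two obstructions—no nontrivial relation targets $G$ (Lemma \ref{lem:discGorb}) and $L \not\to H$ violates saturation (Proposition \ref{prop:BHcond})—are the paper's argument verbatim.
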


\begin{proof} Let $H = H_1 , \dots , H_n$ be the conjugates of $H$ in $G$ and let $\to \,\, = \la (K,H) \ra = \la (K,H_i) \, | \, 1 \leq i \leq n \ra$. Then $\to \,\, = \{ (M,M) \, | \, M \subset G \} \cup \bigcup_{i=1}^n \{ (M \cap K,M) \, | \, M \subset H_i \}$ by Proposition \ref{prop:multitsKnorm}.

If $J \to G$, then $J = G$, and therefore $ \la (J,G) \, | \, J \to G \ra = \Delta \b{Sub}(G) < \,\, \to$. Lemma \ref{lem:discGorb} implies that $\to$ is not realized by any Steiner operad.

On the other hand, $L \not\to H$ because $L \neq H$ and $M \cap K \subset K \subsetneq L$ for all $M \subset G$. Hence $\to$ is not saturated, and Proposition \ref{prop:BHcond} implies that $\to$ is not realized by any linear isometries operad.
\end{proof}

\begin{ex}The $C_{p^3}$-transfer system $\cpppc$ and the $Q_8$-transfer system $\qeic$ are not realized by any Steiner or linear isometries operads.
\end{ex}

We can hone our description of $\to_{\c{K}(U)}$ further. For any $G$-representation $V$, let
	\[
	Orb(V) = \{ (K,G) \, | \, K \subsetneq G \t{ and $G/K$ $G$-embeds into $V$}\}.
	\]

\begin{prop}\label{prop:discind} Let $U$ be a $G$-universe, and suppose that $U \cong \bigoplus_{i \in I} V_i$ for some $G$-representations $V_i$, indexed over a possibly infinite set $I$. Then $\to_{\c{K}(U)}$ is generated by $\bigcup_{i \in I} Orb(V_i)$.
\end{prop}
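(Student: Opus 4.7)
My plan is to use Lemma~\ref{lem:discGorb} to reduce the statement to a claim about pairs $(K,G)$, and then show that any such pair arising from $U$ can be generated from the pairs arising from the summands $V_i$ via the basic closure properties (transitivity and restriction) of a transfer system.

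More precisely, first I would invoke Lemma~\ref{lem:discGorb} to see that $\to_{\c{K}(U)}$ is generated by $Orb(U) = \{(K,G) \mid K \subsetneq G \text{ and } G/K \hookrightarrow U\}$ (together with the reflexive pairs, which are automatic). So it suffices to prove that $Orb(U)$ and $\bigcup_{i \in I} Orb(V_i)$ generate the same $G$-transfer system.

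The inclusion $\bigcup_i Orb(V_i) \subset Orb(U)$ is immediate: each $V_i$ is a $G$-summand of $U$, so any $G$-embedding $G/K \hookrightarrow V_i$ composes with the inclusion $V_i \hookrightarrow U$ to give a $G$-embedding $G/K \hookrightarrow U$. For the reverse direction, I would take $(K,G) \in Orb(U)$, pick a $G$-embedding $\vp: G/K \hookrightarrow U$, and set $x = \vp(eK)$. Decompose $x = v_{i_1} + \cdots + v_{i_n}$ as a finite sum with each $v_{i_j} \in V_{i_j}$ nonzero; then $K = G_x = \bigcap_{j=1}^n G_{v_{i_j}}$. Summands with $G_{v_{i_j}} = G$ can be dropped without changing the intersection, so after discarding them I may assume $H_j := G_{v_{i_j}} \subsetneq G$ for each $j$, and then $(H_j, G) \in Orb(V_{i_j})$ because the $G$-orbit of $v_{i_j}$ provides an embedding $G/H_j \hookrightarrow V_{i_j}$.

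It remains to show that $(K,G) = (H_1 \cap \cdots \cap H_n, G)$ lies in the transfer system $\rightsquigarrow$ generated by $\bigcup_i Orb(V_i)$. I would proceed by induction on $n$. The case $n = 1$ is immediate. For the inductive step, assume $(H_1 \cap \cdots \cap H_{n-1}, G) \in \,\rightsquigarrow$. By closure under restriction along $H_n \subset G$, we obtain $(H_1 \cap \cdots \cap H_n, H_n) = (K, H_n) \in \, \rightsquigarrow$. Composing with $(H_n, G) \in \,\rightsquigarrow$ by transitivity gives $(K, G) \in \,\rightsquigarrow$, as required.

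The main obstacle is essentially packaging the decomposition of $x$ correctly so that the intersection of finitely many stabilizers can be generated step-by-step; once one sees that restriction along $H_n$ followed by transitivity implements the formation of the intersection $(H_1 \cap \cdots \cap H_{n-1}) \cap H_n$, the rest of the argument is purely bookkeeping.
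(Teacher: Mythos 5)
Your proof is correct and follows essentially the same route as the paper: reduce via Lemma~\ref{lem:discGorb} to relations of the form $(K,G)$, decompose the image point $\vp(eK)$ into finitely many components, and identify $K$ as the intersection of the component stabilizers. The only difference is that your inductive restriction-plus-transitivity step reproves Lemma~\ref{lem:transysint} inline, which the paper simply cites.
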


\begin{proof} Let $\to \,\, = \,\, \to_{\c{K}(U)}$ and let $\rightsquigarrow \,\, = \la \bigcup_{i \in I} Orb(V_i) \ra$. If $(K,G) \in Orb(V_i)$ for some $i \in I$, then there is a composite $G$-embedding $G/K \hookrightarrow V_i \hookrightarrow U$, and therefore $(K , G) \in \,\, \to$. Therefore $\rightsquigarrow$ refines $\to$.

Conversely, suppose $K \to G$ and choose a $G$-embedding $\vp : G/K \hookrightarrow \bigoplus_{i \in I} V_i$. Since $G/K$ is finite, the map $\vp$ factors through some finite sum $V_{i_1} \oplus \cdots \oplus V_{i_n}$. Let $(x_1,\dots,x_n) = \vp(eK) \in V_{i_1} \oplus \cdots \oplus V_{i_n}$. Then $K = G_{(x_1,\dots,x_n)} = G_{x_1} \cap \cdots \cap G_{x_n}$. Since we have $G$-embeddings $G/G_{x_i} \hookrightarrow V_i$, it follows that $G_{x_i} \rightsquigarrow G$, and hence $K = G_{x_1} \cap \dots \cap G_{x_n} \rightsquigarrow G$ by Lemma \ref{lem:transysint}. Therefore $\la (K,G) \, | \, K \to G \ra \subset \,\, \rightsquigarrow$, and the left hand side equals $\to$ by Lemma \ref{lem:discGorb}. This proves that $\to$ refines $\rightsquigarrow$.
\end{proof}

In particular, we may calculate $\to_{\c{K}(U)}$ in terms of the irreducible subrepresentations $V \subset U$. The next result follows easily.

\begin{thm}\label{thm:discind} Suppose $G$ is a finite group and $\to$ is a $G$-transfer system. The following are equivalent:
	\begin{enumerate}
		\item{}There is a $G$-universe $U$ such that $\to \,\, = \,\, \to_{\c{K}(U)}$.
		\item{}There is an integer $n \geq 0$ and nontrivial, irreducible real $G$-representations $V_1 , V_2 , \dots , V_n$ such that $\to \,\, = \la \bigcup_{i=1}^n Orb(V_i) \ra $.
	\end{enumerate}
When $n=0$ in $(2)$, we understand $\to$ to be the minimum transfer system.
\end{thm}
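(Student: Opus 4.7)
The plan is to deduce Theorem \ref{thm:discind} directly from Proposition \ref{prop:discind}, using the fact that $G$ has only finitely many isomorphism classes of real irreducible representations.

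For the implication $(1) \Rightarrow (2)$, I would start with a $G$-universe $U$ and decompose it into irreducibles: since $G$ is finite, Maschke's theorem gives $U \cong \bigoplus_j W_j$ where each $W_j$ is irreducible and real. Group the summands by isomorphism type. Since there are only finitely many isomorphism classes of real irreducible $G$-representations, only finitely many distinct classes $[V_1], \ldots, [V_m]$ actually appear. Because $Orb(W) = Orb(W')$ whenever $W \cong W'$, Proposition \ref{prop:discind} gives
\[
\to_{\c{K}(U)} \,\, = \,\, \Big\la \bigcup_{j} Orb(W_j) \Big\ra = \Big\la \bigcup_{i=1}^m Orb(V_i) \Big\ra.
\]
Discarding any trivial $V_i$ is harmless, since $Orb(\bb{R}) = \{(G,G)\}$ is contained in every transfer system. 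This yields $n \leq m$ nontrivial irreducibles with the desired property.

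For $(2) \Rightarrow (1)$, given nontrivial irreducible real $G$-representations $V_1, \ldots, V_n$ (with $n = 0$ interpreted as an empty list), I would set
\[
U = \big( \bb{R} \oplus V_1 \oplus \cdots \oplus V_n \big)^{\oplus \infty}.
\]
This $U$ is a $G$-universe: it contains the trivial summand $\bb{R}$, and every finite dimensional subrepresentation $W \subset U$ is (isomorphic to) a finite sum of copies of $\bb{R}$ and the $V_i$, hence occurs infinitely often as a subrepresentation of $U$. Applying Proposition \ref{prop:discind} to this decomposition gives
\[
\to_{\c{K}(U)} \,\, = \,\, \Big\la Orb(\bb{R}) \cup \bigcup_{i=1}^n Orb(V_i) \Big\ra = \Big\la \bigcup_{i=1}^n Orb(V_i) \Big\ra,
\]
where the second equality again uses $Orb(\bb{R}) = \{(G,G)\} \subset \,\,\to$. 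When $n = 0$, the right-hand side is the minimum transfer system $\Delta \b{Sub}(G)$, matching the stated convention.

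There is no serious obstacle here; the proof is a bookkeeping exercise built on Proposition \ref{prop:discind}. The only subtlety worth spelling out is that one may freely reduce to nontrivial, pairwise non-isomorphic irreducibles because $Orb(-)$ depends only on the isomorphism class of a representation and is trivial for the trivial representation. I would explicitly note this in the write-up to make the equivalence of $(1)$ and $(2)$ a one-line deduction in each direction.
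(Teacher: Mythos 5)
Your proposal is correct and is essentially the paper's own argument: both directions are a direct application of Proposition \ref{prop:discind}, taking $U = [\bb{R} \oplus \bigoplus_{i=1}^n V_i]^\infty$ in one direction and decomposing an arbitrary universe into irreducibles (which changes nothing, since $Orb(-)$ depends only on isomorphism class and trivial summands contribute nothing) in the other. One tiny nit: by the paper's definition $Orb(\bb{R}) = \varnothing$ rather than $\{(G,G)\}$, since the pairs $(K,G)$ in $Orb(V)$ require $K \subsetneq G$; this does not affect your argument, as either way the trivial summand contributes no relations beyond those in every transfer system.
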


\begin{proof} If $\to \,\, = \la \bigcup_{i=1}^n  Orb(V_i) \ra$ for some sequence of nontrivial, irreducible real $G$-representations $V_i$, then $\to \,\, = \,\, \to_{\c{K}(U)}$ for $U = [\bb{R} \oplus \bigoplus_{i=1}^n V_i]^\infty$, by Proposition \ref{prop:discind}. The converse is similar.
\end{proof}

Thus, we can identify the image of $A_{\c{K}} : \b{Uni}(G) \to \b{Tr}(G)$ by computing orbit decompositions of all irreducible real $G$-representations, and then enumerating the transfer systems generated by combinations of these data. We illustrate by example.

\begin{ex}\label{ex:K4Steiner} Let $G = K_4$ once more, and keep notation as in \S\ref{subsec:exts}. We shall further winnow down the candidates found in Example \ref{ex:Ggentr}. There are three nontrivial, irreducible real $K_4$-representations. We have a sign representation $\sigma_a : K_4 \twoheadrightarrow K_4/\la a \ra \stackrel{\sigma}{\to} O(1)$, which satisfies $Orb(\sigma_a) = \{ (\la a \ra , K_4) \}$, and similarly for $b,c \in K_4$. Thus, there are eight $K_4$-universes, which form four orbits under the $\Sigma_3$-action. We give orbit representatives and their transfer systems below.
	\[
	\begin{array}{|c|c|}
	\hline
	U	&	\to_{\c{K}(U)}	\\
	\hline
	\bb{R}^\infty	&	\kindt	\\
	(\bb{R} \oplus \sigma_c)^\infty	&	\kindkc	\\
	(\bb{R} \oplus \sigma_b \oplus \sigma_c)^\infty	&	\kindkbc	\\
	(\bb{R} \oplus \sigma_a \oplus \sigma_b \oplus \sigma_c)^\infty	&	\kinds	\\
	\hline
	\end{array}
	\]
\end{ex}

\begin{ex}\label{ex:Q8Steiner} If $G = Q_8$, then there are four nontrivial, irreducible real representations. There is a sign representation $\sigma_i : Q_8 \twoheadrightarrow Q_8/ \la i \ra \stackrel{\sigma}{\to} O(1)$, and analogous representations $\sigma_j$ and $\sigma_k$ for $j,k \in Q_8$. There is also a four-dimensional representation $\bb{H}$, obtained by letting $Q_8 \subset \bb{H}$ act on the quaternions by left multiplication. We have $Orb(\bb{H}) = \{ (1, Q_8) \}$, $Orb(\sigma_i) = \{ ( \la i \ra , Q_8) \}$, and similarly for $j$ and $k$. Thus, there are sixteen $Q_8$-universes, which form eight $\Sigma_3$-orbits. We give orbit representatives and their transfer systems below.
	\[
	\begin{array}{|c|c|c|c|}
	\hline
	U	&	\to_{\c{K}(U)}	&	U	&	\to_{\c{K}(U)}	\\
	\hline
	\bb{R}^\infty	&	\qeia	&	(\bb{R} \oplus \bb{H})^\infty	&	\qeif	\\
	(\bb{R} \oplus \sigma_k)^\infty	&	\qeiz	&	(\bb{R} \oplus \sigma_k \oplus \bb{H})^\infty	&	\qein	\\
	(\bb{R} \oplus \sigma_j \oplus \sigma_k)^\infty	&	\qeizb	& (\bb{R} \oplus \sigma_j \oplus \sigma_k \oplus \bb{H})^\infty	&	\qeip	\\
	(\bb{R} \oplus \sigma_i \oplus \sigma_j \oplus \sigma_k)^\infty	&	\qeizc	&	(\bb{R} \oplus \sigma_i \oplus \sigma_j \oplus \sigma_k \oplus \bb{H})^\infty	&	\qeiq	\\
	\hline
	\end{array}
	\]
\end{ex}

\begin{ex}\label{ex:Sigma3Steiner} If $G = \Sigma_3$, then the nontrivial, irreducible real representations are the sign representation $\sigma : \Sigma_3 \twoheadrightarrow \Sigma_3 / \la (123) \ra \stackrel{\sigma}{\to} O(1)$ and the representation $\Delta : \Sigma_3 \to O(2)$ of $\Sigma_3$ as the symmetries of a triangle. We have $Orb(\sigma) = \{ ( \la (123) \ra , \Sigma_3 ) \}$ and $Orb(\Delta) = \{ (\la (12) \ra ,\Sigma_3) , ( \la (13) \ra , \Sigma_3) , ( \la (23) \ra , \Sigma_3) , (1 , \Sigma_3) \}$, and hence the transfer systems for $\Sigma_3$-Steiner operads are
	\[
	\begin{array}{|c|c|}
	\hline
	U	&	\to_{\c{K}(U)}	\\
	\hline
	\bb{R}^\infty	&	\dsia	\\
	(\bb{R} \oplus \sigma)^\infty	&	\dsig	\\
	(\bb{R} \oplus \Delta)^\infty	&	\dsif	\\
	(\bb{R} \oplus \sigma \oplus \Delta)^\infty	&	\dsii	\\
	\hline
	\end{array}
	\]
\end{ex}

\subsection{Finite abelian groups} Theorem \ref{thm:discind} gives a reasonable procedure for computing the image of $A_{\c{K}} : \b{Uni} \to \b{Tr}$, but it is another matter to find a clean description of $\t{im}(A_{\c{K}})$ purely in terms of the algebra of transfer systems. We do not believe there is a uniform solution for all finite groups. However, there is a uniform solution if we restrict to finite abelian groups.

\begin{defn}\label{defn:cyclicorb} Suppose that $G$ is a finite group and that $H \subset G$ is a subgroup of $G$. We say that $H$ is \emph{$G$-cocyclic} if $H$ is a normal subgroup of $G$ and the quotient group $G/H$ is cyclic.
\end{defn}

\begin{thm}\label{thm:discfinab} Suppose $G$ is a finite abelian group and $\to$ is a $G$-transfer system. Then $\to$ corresponds to a $G$-Steiner operad if and only if there is an integer $n \geq 0$ and $G$-cocyclic subgroups $H_1,\dots,H_n \subset G$ such that $\to \,\, = \la (H_i,G) \, | \, 1 \leq i \leq n \ra$.
\end{thm}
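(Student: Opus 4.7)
The plan is to reduce the problem to Theorem~\ref{thm:discind} and then analyze the irreducible real representations of a finite abelian group. By Theorem~\ref{thm:discind}, a $G$-transfer system $\to$ is realized by a Steiner operad if and only if $\to = \langle \bigcup_{i=1}^n Orb(V_i) \rangle$ for some nontrivial irreducible real $G$-representations $V_1,\dots,V_n$. So the entire content of the theorem comes down to the identity
\[
\{ Orb(V) \, | \, V \text{ nontrivial irreducible real $G$-rep} \} = \{ \{(H,G)\} \, | \, H \text{ is $G$-cocyclic}, H \neq G \}
\]
when $G$ is finite abelian.

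The main step is the forward inclusion: for every nontrivial irreducible real $G$-representation $V$, I would show that $Orb(V)$ consists of the single pair $(\ker V, G)$ and that $\ker V$ is $G$-cocyclic. Since $G$ is abelian, every irreducible complex character of $G$ is one-dimensional, and the irreducible real representations split into two families. The $1$-dimensional real representations are sign representations $G \twoheadrightarrow C_2 \hookrightarrow O(1)$; their kernels have index $2$, hence are $G$-cocyclic, and the only nonzero stabilizer pair is $(\ker, G)$. The $2$-dimensional real representations come from pairs of conjugate $1$-dimensional complex characters and so factor through cyclic quotients $G \twoheadrightarrow G/H \hookrightarrow SO(2)$. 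Since a nontrivial rotation fixes no nonzero vector, every nonzero $v \in V$ has stabilizer $H = \ker V$, which is again $G$-cocyclic. In either case, $Orb(V) = \{(\ker V, G)\}$.

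For the converse inclusion, given a $G$-cocyclic $H \subsetneq G$, I want an irreducible real $V$ with $\ker V = H$. If $|G/H| = 2$, take the pullback of the sign representation of $C_2$; if $|G/H| = n \geq 3$, take the pullback of the faithful rotation $\lambda_n(1) : C_n \hookrightarrow SO(2)$ along $G \twoheadrightarrow G/H \cong C_n$. In both cases the resulting $V$ is irreducible with kernel exactly $H$, so $Orb(V) = \{(H,G)\}$.

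Combining the two directions with Theorem~\ref{thm:discind} gives the stated biconditional: a transfer system $\to$ arises from some $\c{K}(U)$ if and only if $\to = \langle (H_i,G) \, | \, 1 \leq i \leq n \rangle$ for $G$-cocyclic subgroups $H_1,\dots,H_n$. The only mildly delicate point is verifying that every nonzero vector in a $2$-dimensional real irreducible has the same stabilizer, which uses that the image of $G$ lies in $SO(2)$; everything else is routine bookkeeping in the character theory of finite abelian groups.
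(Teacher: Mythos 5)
Your proposal is correct and follows essentially the same route as the paper: classify the nontrivial irreducible real representations of a finite abelian group, observe that $Orb(V)=\{(\ker V,G)\}$ with $\ker V$ being $G$-cocyclic, realize each proper $G$-cocyclic subgroup as such a kernel, and conclude via Theorem \ref{thm:discind} (the paper phrases the converse with the possibly reducible rotation representations $\lambda_{H_i}$ and Proposition \ref{prop:discind}, but this is only a cosmetic difference from your case split into sign and rotation pullbacks). The only bookkeeping your write-up leaves implicit is that generators $(H_i,G)$ with $H_i=G$ are redundant, which is harmless.
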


\begin{proof} There are two kinds of irreducible real $G$-representations. There are one-dimensional representations, where each $g \in G$ acts as multiplication by $+1$ or $-1$, and there are two-dimensional representations, where each $g \in G$ acts as a rotation by $\theta(g) \in [0,2\pi)$, and at least one $\theta(g)$ is not $0$ or $\pi$. In the former case, we obtain a map $V : G \to O(1) \cong C_2$, and in the latter case we obtain a map $V : G \to C_{|G|} \hookrightarrow SO(2)$, where $C_{|G|}$ embeds in $SO(2)$ as the rotations by multiples of $2\pi/|G|$. Therefore $G/\t{ker} V$ embeds into $C_2$ or $C_{\abs{G}}$, and therefore $\t{ker} V$ is $G$-cocyclic. Now consider the orbit decomposition of $V$. The actions of $C_2$ on $\bb{R}$ and $C_{|G|}$ on $\bb{R}^2$ are free away from the origin. Pulling back to $G$, we see that $G_0 = G$, $G_{x} = \t{ker} V$ for every $x \neq 0$, and therefore $Orb(V) = \{ (\t{ker} V ,G) \}$.

Now suppose $\to$ is a $G$-transfer system. If $\to \,\, = \,\, \to_{\c{K}(U)}$ for some $G$-universe $U \cong [\bb{R} \oplus \bigoplus_{i=1}^n V_i]^\infty$, where each of the the representations $V_i$ is nontrivial and irreducible, then $\to \,\, = \la (\t{ker} V_i , G) \, | \, 1 \leq i \leq n \ra$ by Proposition \ref{prop:discind}. As noted above, each of the subgroups $\t{ker} V_i$ is $G$-cocyclic.

Conversely, suppose $\to \,\, = \la (H_i,G) \, | \, 1 \leq i \leq n \ra$ for some $G$-cocyclic subgroups $H_1,\dots,H_n \subset G$. For each $i$, choose an embedding $G/H_i \hookrightarrow O(2)$ of $G/H_i$ as the rotations by multiples of $2\pi/ \abs{G : H_i}$, and let $\lambda_i : G \twoheadrightarrow G/H_i \hookrightarrow O(2)$ be the pullback to $G$. Then $Orb(\lambda_i) = \{ (H_i,G) \}$. Thus, if $U = [ \bb{R} \oplus \bigoplus_{i=1}^n \lambda_i]^\infty$, then $\to_{\c{K}(U)} \,\, = \la (H_i,G) \, | \, 1 \leq i \leq n \ra = \,\, \to$ by Proposition \ref{prop:discind} again.
\end{proof}

This simplifies further for finite cyclic groups.

\begin{cor}\label{cor:Gdisccyc} Let $n > 0$ be a natural number. A $C_n$-transfer system $\to$ corresponds to a Steiner operad if and only if $\to \,\, = \la (H_i,C_n) \, | \, 1 \leq i \leq m \ra$ for some subgroups $H_1,\dots,H_m \subset C_n$.
\end{cor}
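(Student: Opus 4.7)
The plan is to derive this as an immediate special case of Theorem \ref{thm:discfinab}. Since $C_n$ is a finite abelian group, Theorem \ref{thm:discfinab} applies and tells us that the $C_n$-transfer systems realized by Steiner operads are precisely those of the form $\la (H_i,C_n) \, | \, 1 \leq i \leq m \ra$ for $C_n$-cocyclic subgroups $H_i \subset C_n$. So the corollary reduces to verifying that the cocyclicity hypothesis is vacuous when the ambient group is cyclic.

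For this, I would simply observe the following. Let $H \subset C_n$ be any subgroup. Because $C_n$ is abelian, $H$ is automatically normal in $C_n$, and the quotient $C_n / H$ is a quotient of a cyclic group, hence cyclic. Therefore $H$ is $C_n$-cocyclic in the sense of Definition \ref{defn:cyclicorb}. Conversely, every $C_n$-cocyclic subgroup is in particular a subgroup. Thus the collection of $C_n$-cocyclic subgroups coincides with the collection of all subgroups of $C_n$, and Theorem \ref{thm:discfinab} specializes directly to the statement of the corollary.

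There is no real obstacle here; the content of the corollary is already contained in Theorem \ref{thm:discfinab}, and the only thing to check is the elementary group-theoretic fact that every subgroup of a cyclic group has cyclic quotient. The proof should fit in two or three sentences.
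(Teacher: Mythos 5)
Your proposal is correct and is exactly the paper's (implicit) argument: the corollary is stated as an immediate specialization of Theorem \ref{thm:discfinab}, using the fact that every subgroup of $C_n$ is normal with cyclic quotient, hence $C_n$-cocyclic.
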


\begin{ex}\label{ex:Cp3CpqSteiner} The $C_{p^3}$-transfer systems corresponding to Steiner operads are
	\[
	\cpppa \,\,\, \cpppj \,\,\, \cppph \,\,\, \cpppf \,\,\, \cpppk \,\,\, \cpppm \,\,\, \cpppi \,\,\, \cpppn .
	\]
The $C_{pq}$-transfer systems corresponding to Steiner operads are
	\[
	\indt \, \indpqp \, \indpqq \, \indpqf \, \indpqfp \, \indpqfq \, \inds .
	\]
\end{ex}

Requiring $H_i \subset G$ to be $G$-cocyclic is a nontrivial constraint. The next example generalizes Example \ref{ex:K4Steiner}.

\begin{ex} Suppose that $G = (C_p)^{\times n}$ for a prime $p$ and integer $n > 0$. A proper subgroup $H \subset G$ is $G$-cocyclic if and only if it is a codimension $1$ subspace of $\bb{F}_p^n$ under the identification $(C_p)^{\times n} \cong (\bb{F}_p^n,+)$. Therefore a $(C_p)^{\times n}$-transfer system $\to$ arises from a Steiner operad if and only if it is generated by relations of the form $(C_p)^{\times n-1} \to (C_p)^{\times n}$, for some embedded copies of $(C_p)^{\times n-1}$ in $(C_p)^{\times n}$.
\end{ex}

We can also combine Theorem \ref{thm:discfinab} with Proposition \ref{prop:BHcond} to exclude transfer systems from the images of $A_{\c{K}}$ and $A_{\c{L}}$. The next result generalizes the fact that no $K_4$-Steiner or linear isometries operad realizes the transfer system $\kindkt$.

\begin{thm}\label{thm:notLorD} Suppose that $G$ is a non-cyclic finite abelian group. Then the $G$-transfer system $\to \,\, = \la (0,G) \ra$ generated by $(0,G)$ alone is not realized by a $G$-Steiner or a $G$-linear isometries operad.
\end{thm}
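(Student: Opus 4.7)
The plan is to attack the Steiner and linear isometries halves independently, with each reducing to a characterization already proved in the paper. The sole computational step is to identify $\to \,=\, \la (0,G) \ra$ explicitly. Closure under restriction forces every $(0,K) = (0 \cap K, K)$ with $K \subset G$ into the generated system, and the collection
\[
\{(K,K) \mid K \subset G\} \,\cup\, \{(0,K) \mid K \subset G\}
\]
is readily checked to already be a $G$-transfer system (conjugation is vacuous in the abelian setting, and transitivity reduces to a trivial case check). The crucial takeaway is that the \emph{only} relations in $\la (0,G) \ra$ whose target equals $G$ are the two extremes $(0,G)$ and $(G,G)$.

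With this in hand, the Steiner half follows from Theorem \ref{thm:discfinab}. Suppose for contradiction that $\la (0,G) \ra = \la (H_1,G), \ldots, (H_n,G) \ra$ for $G$-cocyclic subgroups $H_1,\dots,H_n$. Each generator $(H_i, G)$ lies in $\la (0,G) \ra$, so the observation above forces $H_i \in \{0, G\}$. Generators with $H_i = G$ are reflexive and contribute nothing; a generator with $H_i = 0$ demands that $0$ be $G$-cocyclic, i.e., that $G/0 \cong G$ be cyclic, which fails by hypothesis. In either case we cannot produce the non-reflexive relation $(0,G)$, a contradiction.

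For the linear isometries half, I invoke Proposition \ref{prop:BHcond}: the transfer system of any linear isometries operad is saturated in the sense of Definition \ref{defn:transys}(e). Since $G$ is non-cyclic and finite abelian, $\abs{G} \geq 4$ and $G$ admits a proper nontrivial subgroup $L$, giving a chain $0 \subsetneq L \subsetneq G$. By the initial computation, $0 \to G$ lies in $\la (0,G) \ra$ while $L \to G$ does not, so saturation fails along this chain and no linear isometries operad can realize $\la (0,G) \ra$.

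The main obstacle—if there is one—is the initial description of $\la (0,G) \ra$; after that both conclusions drop out of the existing characterizations essentially by inspection.
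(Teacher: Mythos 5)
Your proposal is correct and follows essentially the same route as the paper: compute $\la (0,G) \ra$ explicitly (this is Corollary \ref{cor:indgen1orb} in the paper), then rule out Steiner operads via Theorem \ref{thm:discfinab} using that $0$ is not $G$-cocyclic when $G$ is non-cyclic, and rule out linear isometries operads via the saturation condition of Proposition \ref{prop:BHcond}. The only differences are cosmetic rephrasings of the same contradictions.
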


\begin{proof} We have $\to \,\, = \{ (M,M) \, | \, M \subset G \} \cup \{ (0,M) \, | \, M \subset G\}$, by Corollary \ref{cor:indgen1orb} or by inspection. Thus, if $H \to G$, then either $H = 0$ or $H = G$.

On the other hand, suppose $U$ is a $G$-universe such that $0 \to_{\c{K}(U)} G$. Then by Theorem \ref{thm:discfinab}, $\to_{\c{K}(U)} \,\, = \la (H_i,G) \, | \, 1 \leq i \leq n \ra$ for some $G$-cocyclic subgroups $H_i \subset G$. Since $\to_{\c{K}(U)}$ is nontrivial, some $H_i$ must be a proper subgroup of $G$, and since $G$ is noncyclic, the subgroup $H_i$ must also be nontrivial. Thus $0 \subsetneq H_i \subsetneq G$ and $H_i \to_{\c{K}(U)} G$. It follows that $\to_{\c{K}(U)} \,\, \neq \,\, \to$ for every $G$-universe $U$.

Finally, suppose $U$ is a $G$-universe such that $0 \to_{\c{L}(U)} G$. Then $H \to_{\c{L}(U)} G$ for every $H \subset G$ because $\to_{\c{L}(U)}$ is saturated. Since $G$ is non-cyclic, any non-identity element $x \in G$ generates a proper, nontrivial subgroup $0 \subsetneq \la x \ra \subsetneq G$ such that $\la x \ra \to_{\c{L}(U)} G$. Therefore $\to_{\c{L}(U)} \,\, \neq \,\, \to$ for every $G$-universe $U$.
\end{proof}

\subsection{Parametrizing a transfer map} The previous two sections explain how to compute the transfers parametrized by $\c{K}(U)$, for any given universe $U$. In this section, we turn the problem around. When $G$ is finite abelian, we construct minimal universes $U$ such that $\c{K}(U)$ parametrizes a given transfer $K \to H$.

For any finite abelian group $G$ and proper, $G$-cocyclic subgroup $H \subsetneq G$, let $\lambda_H$ be a two-dimensional real $G$-representation $G \twoheadrightarrow G/H \cong C_{n} \hookrightarrow SO(2)$, obtained by choosing an isomorphism $G/H \cong C_n$ for some $n \geq 2$, and then embedding $C_n$ into $SO(2)$ as the rotations by multiples of $2\pi/n$.

\begin{lem}\label{lem:lambdaHiso} Suppose that $V \subset \lambda_H$ is an irreducible $G$-representation. Then $G_x = H$ for every nonzero $x \in V$.
\end{lem}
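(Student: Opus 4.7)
The plan is to prove the two inclusions $H \subseteq G_x$ and $G_x \subseteq H$ separately, leveraging the fact that $\lambda_H$ is by construction pulled back from a faithful $G/H$-representation in $SO(2)$.

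First I would note that the forward inclusion $H \subseteq G_x$ is immediate from the definition of $\lambda_H$: the $G$-action factors through the quotient $G \twoheadrightarrow G/H$, so every element of $H$ acts as the identity on all of $\lambda_H$, and in particular fixes every $x \in V$.

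For the reverse inclusion $G_x \subseteq H$, the key observation is that, because the $G$-action on $\lambda_H$ (and hence on $V$) factors through $G/H$, it suffices to show that no nontrivial element of the image $G/H \cong C_n \hookrightarrow SO(2)$ has a nonzero fixed vector in $V$. I would then split into the two possible shapes for the irreducible subrepresentation $V \subseteq \lambda_H$. If $n \geq 3$, then $\lambda_H$ is already irreducible as a real $G$-representation (a rotation by $2\pi/n$ has no real eigenvalues), so $V = \lambda_H$, and any nontrivial rotation in $SO(2)$ acts freely on $\bb{R}^2 \setminus \{0\}$. If $n = 2$, then the nontrivial element of $G/H$ acts as $-I$ on $\lambda_H$, so it again fixes no nonzero vector, regardless of which one-dimensional invariant subspace $V$ we pick inside the split $\lambda_H \cong \sigma \oplus \sigma$.

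In both cases, any $g \in G$ fixing a nonzero $x \in V$ must map to the identity in $G/H$, i.e.\ lie in $H$, completing the reverse inclusion. The only real subtlety is bookkeeping the reducible case $n = 2$, where $V$ is a proper subspace of $\lambda_H$; but since $-I$ acts without fixed points on every line through the origin, the argument goes through uniformly.
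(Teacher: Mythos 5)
Your proof is correct and takes essentially the same route as the paper's: both reduce to the observation that the faithful rotation action of $G/H \cong C_n$ on $\bb{R}^2$ is free away from the origin, so the stabilizer of any nonzero vector is exactly the kernel $H$, with the same case split between $n \geq 3$ (where $\lambda_H$ is irreducible) and $n = 2$ (where the nontrivial element acts as $-I$ and $\lambda_H \cong \sigma_H \oplus \sigma_H$).
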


\begin{proof} Every nonzero $x \in \lambda_H$ has $G_x = H$,  as explained in the proof of Theorem \ref{thm:discfinab}. This proves the lemma when $\lambda_H$ is irreducible. If $\lambda_H$ is reducible, then it has an invariant one-dimensional subspace. Therefore $G/H \cong C_2$ and $\lambda_H \cong \sigma_H \oplus \sigma_H$, where $\sigma_H$ is $G \twoheadrightarrow G/H \cong C_2 \stackrel{\sigma}{\hookrightarrow} O(1)$. In this case, $V \subset \lambda_H$ is isomorphic to $\sigma_H$, and $G_x = H$ for every nonzero $x \in \sigma_H$.
\end{proof}

We use the representations $\lambda_H$ to construct the desired universes.

\begin{prop}\label{prop:TadmU} Suppose that $G$ is a finite abelian group and that $K \subsetneq H \subset G$ are subgroups. Choose distinct, proper, $G$-cocyclic subgroups $H_1 , \dots , H_m \subsetneq G$ such that $H \cap H_1 \cap \cdots \cap H_m = K$, and let
	\[
	U = [ \bb{R} \oplus \lambda_{H_1} \oplus \cdots \oplus \lambda_{H_m} ]^\infty.
	\]
Then $K \to_{\c{K}(U)} H$, and $[U] \in \b{Uni}$ is minimal with this property if and only if $H \cap H_1 \cap \cdots \cap H_{i-1} \cap H_{i+1} \cap \cdots \cap H_m \supsetneq K$ for every $i = 1 , \dots , m$.
\end{prop}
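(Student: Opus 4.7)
The plan is to prove the two assertions separately: the transfer relation by exhibiting an element of $U$ with the right $H$-stabilizer, and the minimality dichotomy by analyzing how any witness for $K \to_{\c{K}(U')} H$ must distribute across the isotypic components of $U$.

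For the transfer relation, pick a nonzero vector $v_i \in \lambda_{H_i}$ for each $i$ and form
\[
x = (0, v_1, \ldots, v_m) \in \bb{R} \oplus \lambda_{H_1} \oplus \cdots \oplus \lambda_{H_m} \subset U.
\]
Lemma \ref{lem:lambdaHiso} gives $G_{v_i} = H_i$, so $G_x = H_1 \cap \cdots \cap H_m$ and $G_x \cap H = K$ by hypothesis. The $H$-orbit of $x$ is therefore isomorphic to $H/K$, which supplies an $H$-embedding $H/K \hookrightarrow \t{res}^G_H U$, and hence $K \to_{\c{K}(U)} H$.

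For the minimality dichotomy, the key structural observation is that for $i \neq j$ the irreducible subrepresentations of $\lambda_{H_i}$ and $\lambda_{H_j}$ are non-isomorphic: by Lemma \ref{lem:lambdaHiso} each nonzero vector in such an irreducible has stabilizer $H_i$ (respectively $H_j$), and the kernel of a representation recovers its index. Thus $[U] \in \b{Uni}$ is determined precisely by the set $\{H_1, \ldots, H_m\}$, and any sub-universe $[U'] \leq [U]$ corresponds to the set $T \subset \{1, \ldots, m\}$ of indices $i$ for which $[U']$ contains the irreducibles of $\lambda_{H_i}$. If some $i$ satisfies $H \cap \bigcap_{j \neq i} H_j = K$, then applying the first part to the smaller family $\{H_j\}_{j \neq i}$ produces $U' = [\bb{R} \oplus \bigoplus_{j \neq i} \lambda_{H_j}]^\infty$ with $[U'] < [U]$ and $K \to_{\c{K}(U')} H$, so $[U]$ is not minimal.

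Conversely, suppose $H \cap \bigcap_{j \neq i} H_j \supsetneq K$ for every $i$ and let $[U'] \leq [U]$ satisfy $K \to_{\c{K}(U')} H$. A witness $y \in U'$ with $G_y \cap H = K$ lies in a finite direct sum of irreducibles of $U'$; its nonzero coordinates occupy irreducibles of $\lambda_{H_i}$ for $i$ ranging over some set $S \subset \{1, \ldots, m\}$, and Lemma \ref{lem:lambdaHiso} gives $G_y = \bigcap_{i \in S} H_i$. If some $i_0 \notin S$, then
\[
G_y \cap H \supseteq H \cap \bigcap_{j \neq i_0} H_j \supsetneq K,
\]
contradicting $G_y \cap H = K$. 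Hence $S = \{1, \ldots, m\}$, so $[U'] \geq [U]$ and in fact $[U'] = [U]$. The only real subtlety is bookkeeping when $G/H_i \cong C_2$ and $\lambda_{H_i} = \sigma_{H_i} \oplus \sigma_{H_i}$ is reducible; Lemma \ref{lem:lambdaHiso} is formulated precisely to subsume this case uniformly, so it presents no obstacle.
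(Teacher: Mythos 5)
Your proof is correct, and it takes a more hands-on route than the paper's. For the positive assertion, you exhibit an explicit witness vector $x=(0,v_1,\dots,v_m)$ and read off $H_x = H\cap H_1\cap\cdots\cap H_m = K$ from Lemma \ref{lem:lambdaHiso}, which verifies the embedding criterion of \cite[Theorem 4.19]{BH} directly; the paper instead invokes Proposition \ref{prop:discind} to identify $\to_{\c{K}(U)}$ as $\la (H_i,G)\,|\,1\leq i\leq m\ra$ and then Proposition \ref{prop:indGorbab} to see that $(K,H)$ lies in that generated transfer system. For minimality, the paper works with the maximal proper subuniverses $U_i = [\bb{R}\oplus\bigoplus_{j\neq i}\lambda_{H_j}]^\infty$, computes $\to_{\c{K}(U_i)}$ by the same two propositions, and uses that $A_{\c{K}}$ is order-preserving; you instead classify all subuniverses of $U$ by subsets of $\{H_1,\dots,H_m\}$ (using that the irreducible constituents of the $\lambda_{H_i}$ are pairwise non-isomorphic, again by the stabilizer computation) and argue coordinate-by-coordinate that any witness for $K\to_{\c{K}(U')}H$ forces every class to appear in $U'$, so $[U']=[U]$. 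Both arguments pivot on Lemma \ref{lem:lambdaHiso}, and both handle the reducible case $\lambda_{H_i}\cong\sigma_{H_i}\oplus\sigma_{H_i}$ the same way; what the paper's route buys is reuse of the general machinery for generated transfer systems (Appendix \ref{sec:genindsys}), while yours is more elementary and self-contained, at the cost of redoing the intersection-of-stabilizers bookkeeping explicitly. One small point of care, which you handle correctly but implicitly: coordinates of the witness lying in trivial summands contribute stabilizer $G$ and so do not disturb the identity $G_y=\bigcap_{i\in S}H_i$, including the degenerate case $S=\varnothing$.
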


\begin{proof} Let $U$ be as above. Then $\to_{\c{K}(U)} \,\, = \la (H_i,G) \, | \, 1 \leq i \leq m \ra$ by Lemma \ref{lem:lambdaHiso} and Proposition \ref{prop:discind}, and Proposition \ref{prop:indGorbab} implies $K \to_{\c{K}(U)} H$ because we have assumed $H \cap H_1 \cap \cdots \cap H_m = K$.

Next, let $\lambda_i = \lambda_{H_i}$ and $U_i = [\bb{R} \oplus \bigoplus_{j \neq i} \lambda_j ]^\infty$. Then there is no $G$-embedding $\lambda_i \hookrightarrow U_i$. For if there were an embedding, then an irreducible subrepresentation $V \subset \lambda_i$ would embed in $\bb{R}$ or $\lambda_j$ for some $j \neq i$, but Lemma \ref{lem:lambdaHiso} implies this is impossible because the subgroups $G,H_1 , \dots , H_m$ are all distinct. Therefore $U_i$ is a proper subuniverse of $U$, and it is maximal proper because each $\lambda_i$ is either irreducible, or splits as $\lambda_{H_i} \cong \sigma_{H_i} \oplus \sigma_{H_i}$.

We now consider the minimality of $U$. First, note that $\to_{\c{K}(U_i)} = \la (H_j,G) \, | \, j \neq i \ra$. Thus, if $H \cap H_1 \cap \cdots \cap H_{i-1} \cap H_{i+1} \cap \cdots \cap H_m = K$ for some $i$, then $K \to_{\c{K}(U_i)} H$ by Proposition \ref{prop:indGorbab}. In this case, $[U]$ is not minimal among the classes $[U'] \in \b{Uni}$ such that $K \to_{\c{K}(U')} H$.

Now suppose that $H \cap H_1 \cap \cdots \cap H_{i-1} \cap H_{i+1} \cap \cdots \cap H_m \supsetneq K$ for each $i = 1, \dots, m$. Then $K \not\to_{\c{K}(U_i)} H$ for every $i$, by Proposition \ref{prop:indGorbab}. Therefore $[U]$ is minimal, because any proper subuniverse $U' \hookrightarrow U$ of $U$ $G$-embeds into one of the $U_i$, and hence $K \not\to_{\c{K}(U')} H$ as well.
\end{proof}

\begin{ex} We indicate how this works for $G = K_4$. Keep notation as in Example \ref{ex:K4Steiner}. The proper, $K_4$-cocyclic subgroups are $\la a \ra$, $\la b \ra$, and $\la c \ra$, and the corresponding $\lambda$ representations are $\lambda_{\la a \ra} \cong \sigma_a \oplus \sigma_a$, $\lambda_{\la b \ra} \cong \sigma_b \oplus \sigma_b$, and $\lambda_{\la c \ra} \cong \sigma_c \oplus \sigma_c$.

Suppose we wish to parametrize $\la a \ra \to K_4$ with a Steiner operad. Following Proposition \ref{prop:TadmU}, we need a set of $K_4$-cocyclic subgroups that intersect to $\la a \ra$. The singleton $\{ \la a \ra \}$ works, and $U = [ \bb{R} \oplus \lambda_{\la a \ra} ]^\infty \cong [ \bb{R} \oplus \sigma_a ]^{\infty}$ is a minimal universe such that $\la a \ra \to_{\c{K}(U)} K_4$.

Now suppose we wish to parametrize $1 \to \la a \ra$. We need $K_4$-cocyclic subgroups $H_1,H_2,\dots$ such that $\la a \ra \cap H_1 \cap H_2 \cap \cdots = 1$. This holds as long as we include one of $\la b \ra$ or $\la c \ra$. Therefore $1 \to_{\c{K}(U)} \la a \ra$ holds whenever $\sigma_b$ or $\sigma_c$ embed in $U$, and the universes $U = [\bb{R} \oplus \sigma_b]^\infty$ and $[\bb{R} \oplus \sigma_c ]^\infty$ are minimal for this transfer.

Finally, suppose we wish to parametrize $1 \to K_4$. Since any two of $\la a \ra$, $\la b \ra$, and $\la c \ra$ intersect trivially, we have $1 \to_{\c{K}(U)} K_4$ for any $U = [ \bb{R} \oplus \sigma_x \oplus \sigma_y ]^\infty$ such that $x \neq y$, or for $U = [ \bb{R} \oplus \sigma_a \oplus \sigma_b \oplus \sigma_c ]^\infty$. The former are minimal.
\end{ex}

In general, if $0 \to_{\c{K}(U)} G$ for a non-cyclic finite abelian group $G$, then we should expect $U$ to be reasonably large. The next example illustrates.

\begin{ex} Let $G = (C_p)^{\times n} \cong (\bb{F}_p^n,+)$ and suppose $V \subsetneq \bb{F}_p^n$ is a proper subspace. Choose lines $\ell_1, \dots , \ell_m$ such that $\bb{F}_p^n = V \oplus \ell_1 \oplus \cdots \oplus \ell_m$, let $W_i = V \oplus \ell_1 \oplus \cdots \oplus \ell_{i-1} \oplus \ell_{i+1} \oplus \cdots \oplus \ell_m$, and let $\lambda_i = \lambda_{W_i}$ be the pullback of the representation $\lambda : C_p \hookrightarrow SO(2)$ along the quotient $\pi_i : \bb{F}_p^n \twoheadrightarrow \bb{F}_p^n/W_i \cong C_p$. Then $U = [\bb{R} \oplus \lambda_1 \oplus \cdots \oplus \lambda_m]^\infty$ is a minimal universe such that $V \to_{\c{K}(U)} \bb{F}_p^n$.
\end{ex}

Thus, Steiner operads have difficulty parametrizing a transfer $K \to G$ when $K$ is low in $\b{Sub}(G)$, at least when $G$ is finite abelian and has many cyclic summands.

\section{Linear isometries operads}\label{sec:linisom}

In this section, we continue Blumberg and Hill's analysis of equivariant linear isometries operads. We begin with some generalities, and then we restrict our ambient group $G$ to finite cyclic groups. When the order of $G$ is a prime power or a product of two distinct primes, we obtain strong results about the image of $A_{\c{L}} : \b{Uni}(G) \to \b{Tr}(G)$ (Theorems \ref{thm:LUCpn} and \ref{thm:Cpqlinisom}).

\subsection{General results} Suppose that $U$ is a $G$-universe, and consider the linear isometries operad $\c{L}(U)$. If $K \subset H \subset G$ are subgroups, then by \cite[Theorem 4.18]{BH},
	\[
	K \to_{\c{L}(U)} H	\quad\t{if and only if}\quad \t{ind}_K^H\t{res}^G_K U \t{ $H$-embeds into } \t{res}^G_H U .
	\]
Such $H$-embeddings may be constructed one subrepresentation at a time because $\t{ind}$ and $\t{res}$ preserve direct sums, and $U$ is a universe. In particular, it is enough to show that for every irreducible $H$-representation $V \subset \t{res}^G_H U$ and every irreducible $W \subset \t{ind}_K^H \t{res}^H_K V$, there is an $H$-embedding of $W$ into $\t{res}^G_H U$.

The condition above always determines if a relation $K \to_{\c{L}(U)} H$ holds or not, but checking it for every possible inclusion $K \subset H$ is recipe for boredom. We review a few techniques for ruling out transfers, following Blumberg and Hill.

\begin{prop}[\protect{\cite[Corollary 4.20]{BH}}] The transfer system $\to_{\c{L}(U)}$ is a refinement of $\to_{\c{K}(U)}$ for any $G$-universe $U$.
\end{prop}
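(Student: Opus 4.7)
The plan is to unwind both characterizations through \cite[Theorems 4.18, 4.19]{BH} and exploit the fact that a universe $U$ contains trivial summands.

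Assume $K \to_{\c{L}(U)} H$, so that there is an $H$-embedding $\t{ind}_K^H \t{res}^G_K U \hookrightarrow \t{res}^G_H U$. I want to produce an $H$-embedding $H/K \hookrightarrow \t{res}^G_H U$, which by \cite[Theorem 4.19]{BH} will give $K \to_{\c{K}(U)} H$. The key observation is that because $U$ is a $G$-universe, it has a trivial summand $\bb{R} \subset U$, and therefore $\t{res}^G_K U$ contains a trivial $K$-summand $\bb{R}$.

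Next I would note that $\t{ind}_K^H \bb{R} \cong \bb{R}[H/K]$, the real permutation representation of the $H$-set $H/K$. Inside $\bb{R}[H/K]$, the standard basis $\{e_{hK}\}_{hK \in H/K}$ forms an $H$-orbit isomorphic to $H/K$ as an $H$-set. Composing these embeddings gives
\[
H/K \hookrightarrow \bb{R}[H/K] \cong \t{ind}_K^H \bb{R} \hookrightarrow \t{ind}_K^H \t{res}^G_K U \hookrightarrow \t{res}^G_H U ,
\]
where the last arrow is the assumed $H$-embedding. This realizes $H/K$ as a subset of $\t{res}^G_H U$, so $K \to_{\c{K}(U)} H$, as required.

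There is no real obstacle here; the only point to verify carefully is that induction is additive (so that $\t{ind}_K^H \bb{R}$ is indeed a summand of $\t{ind}_K^H \t{res}^G_K U$) and that the basis of $\bb{R}[H/K]$ forms a copy of $H/K$. Both are standard.
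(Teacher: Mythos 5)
Your argument is correct. The paper does not reprove this statement; it simply cites \cite[Corollary 4.20]{BH}, so there is no in-paper proof to compare against. Your reasoning — extract a trivial $K$-summand $\bb{R} \subset \t{res}^G_K U$, observe that $\t{ind}_K^H \bb{R} \cong \bb{R}[H/K]$ contains the $H$-set $H/K$ as its standard basis, and compose with the assumed $H$-embedding to land $H/K$ in $\t{res}^G_H U$ — is exactly the idea underlying that corollary in \cite{BH}, and the same chain of embeddings $\t{ind}_K^H \bb{R} \hookrightarrow \t{ind}_K^H\t{res}^G_K U \hookrightarrow \t{res}^G_H U$ reappears verbatim in this paper's proof of Proposition \ref{prop:admlinirrep}. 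The two footnote-size points you flagged (additivity of induction so that $\t{ind}_K^H\bb{R}$ is a subrepresentation of $\t{ind}_K^H\t{res}^G_K U$, and that an injective $H$-equivariant linear map carries the basis $H$-orbit to an $H$-set isomorphic to $H/K$) are indeed the only things that need checking, and both hold.
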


The transfer system $\to_{\c{K}(U)}$ can be computed using Proposition \ref{prop:discind}, which gives an easy upper bound on $\to_{\c{L}(U)}$. The next observation cuts things down further.

\begin{prop}[\protect{\cite[p. 17]{BH}}]\label{prop:BHcond} The transfer system $\to_{\c{L}(U)}$ is saturated for every $G$-universe $U$.
\end{prop}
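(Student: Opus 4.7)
The plan is to invoke Blumberg and Hill's criterion: $K \to_{\c{L}(U)} H$ iff there exists an $H$-embedding $\t{ind}_K^H \t{res}^G_K U \hookrightarrow \t{res}^G_H U$. Given a chain $K \subset L \subset H$ with $K \to_{\c{L}(U)} H$, saturation requires $K \to_{\c{L}(U)} L$ and $L \to_{\c{L}(U)} H$. The first is automatic from Proposition \ref{prop:indsystransys}(d) applied with the intermediate subgroup $L$, yielding $K = K \cap L \to_{\c{L}(U)} L$. The substance of the proposition is therefore the second relation.

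The key technical ingredient is that for a finite-index inclusion $K \subset L$ and any $L$-representation $V$, the unit of the $(\t{ind}_K^L, \t{res}^L_K)$ adjunction furnishes a natural $L$-equivariant injection
\[
\eta_V : V \hookrightarrow \t{ind}_K^L \t{res}^L_K V , \qquad \eta_V(v) = \sum_{\ell K \in L/K} \ell \otimes_{\bb{R}[K]} \ell^{-1} v .
\]
Well-definedness on cosets (replacing $\ell$ by $\ell k$ moves $k$ across the tensor), $L$-equivariance under left translation, and injectivity (visible from the coset decomposition $\t{ind}_K^L \t{res}^L_K V \cong \bigoplus_{\ell K} \ell \otimes V$, or equivalently from the isomorphism $\t{ind}_K^L \t{res}^L_K V \cong V \otimes_{\bb{R}} \bb{R}[L/K]$ with diagonal $L$-action, under which $\eta_V$ is induced by the trivial inclusion $\bb{R} \hookrightarrow \bb{R}[L/K]$) are all routine. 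Specializing to $V = \t{res}^G_L U$ and applying $\t{ind}_L^H$, which is exact because $\bb{R}[H]$ is free over $\bb{R}[L]$, together with the transitivity identification $\t{ind}_L^H \t{ind}_K^L \t{res}^L_K \t{res}^G_L U \cong \t{ind}_K^H \t{res}^G_K U$, produces an $H$-embedding $\t{ind}_L^H \t{res}^G_L U \hookrightarrow \t{ind}_K^H \t{res}^G_K U$. Composing with the given embedding $\t{ind}_K^H \t{res}^G_K U \hookrightarrow \t{res}^G_H U$ yields the desired $\t{ind}_L^H \t{res}^G_L U \hookrightarrow \t{res}^G_H U$, whence $L \to_{\c{L}(U)} H$.

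The main subtlety to monitor is that an ``$H$-embedding'' of universes means an isometric embedding of real $H$-inner product spaces, not merely an $H$-linear injection. However, because universes contain each occurring irreducible with infinite multiplicity and the category of real $H$-representations is semisimple, any $H$-linear injection between such universe-type representations can be promoted to an isometric embedding by matching isotypical summands with their (infinite) multiplicities. This is a notational matter rather than a genuine obstacle, so I expect the argument above to go through verbatim.
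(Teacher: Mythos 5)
Your proof is correct and takes essentially the same route as the paper: the key step in both is that the injective unit $V \hookrightarrow \t{ind}_K^L\t{res}^L_K V$ (which, strictly, is the unit of $\t{res}^L_K \dashv \t{coind}_K^L \cong \t{ind}_K^L$ rather than of $\t{ind}_K^L \dashv \t{res}^L_K$ as you label it — your explicit formula is exactly this map) composed with the mono-preserving functor $\t{ind}_L^H$ gives the embedding forcing $L \to_{\c{L}(U)} H$, with the $K \to L$ half and the injection-versus-isometry point being routine, as you note.
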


Briefly, if $K \subset L \subset H$, then $\t{ind}_L^H\t{res}^H_L V$ embeds into $\t{ind}^H_K \t{res}^H_K V$ because the unit of the adjunction $\t{res}^L_K \dashv \t{coind}_K^L \cong \t{ind}_K^L$ is injective, and the right adjoint $\t{ind}_L^H$ preserves monomorphisms.

From here, we can start ruling out relations $K \to_{\c{L}(U)} H$ on a case-by-case basis.

\begin{prop}\label{prop:admlinirrep} Suppose $U$ is a $G$-universe and assume $K \subset H \subset G$ are subgroups such that $K \to_{\c{L}(U)} H$. Then $\t{res}^G_H U$ contains every irreducible $H$-representation with nonzero $K$-fixed points.
\end{prop}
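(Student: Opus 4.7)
The plan is to exploit the criterion $K \to_{\c{L}(U)} H \iff \t{ind}_K^H \t{res}^G_K U \hookrightarrow \t{res}^G_H U$ together with a Frobenius reciprocity argument that extracts every $K$-fixed irreducible from a single copy of the trivial representation.

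First, I would note that for any irreducible $H$-representation $V$, Frobenius reciprocity gives
\[
\t{Hom}_H\bigl(\t{ind}_K^H \bb{R},\, V\bigr) \;\cong\; \t{Hom}_K\bigl(\bb{R},\, \t{res}^H_K V\bigr) \;=\; V^K .
\]
Thus $V^K \neq 0$ exactly when there is a nonzero $H$-map $\t{ind}_K^H \bb{R} \to V$, and since $V$ is irreducible and we are working with real representations of a finite group (hence semisimple), this forces $V$ to be a direct summand of the permutation representation $\t{ind}_K^H \bb{R} \cong \bb{R}[H/K]$.

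Next, I would use the definition of a $G$-universe, which guarantees that $U$ contains a trivial summand, hence $\t{res}^G_K U$ contains $\bb{R}$ as a $K$-subrepresentation. Applying the exact functor $\t{ind}_K^H$, we obtain an $H$-embedding $\t{ind}_K^H \bb{R} \hookrightarrow \t{ind}_K^H \t{res}^G_K U$. Composing with the embedding $\t{ind}_K^H \t{res}^G_K U \hookrightarrow \t{res}^G_H U$ supplied by the hypothesis $K \to_{\c{L}(U)} H$ (via \cite[Theorem 4.18]{BH}), we conclude that $\bb{R}[H/K] \hookrightarrow \t{res}^G_H U$ as $H$-representations.

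Putting the two steps together: any irreducible $H$-representation $V$ with $V^K \neq 0$ appears as a summand of $\bb{R}[H/K]$, which itself $H$-embeds into $\t{res}^G_H U$, so $V \hookrightarrow \t{res}^G_H U$. There is no real obstacle here; the argument is essentially a one-line consequence of the admissibility criterion plus Frobenius reciprocity, and the only point to be careful about is ensuring that $U$ contributes a trivial $K$-summand after restriction, which is immediate from the definition of a $G$-universe.
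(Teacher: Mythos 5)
Your argument is correct and is essentially the paper's proof: the paper likewise uses the trivial summand of $U$ to get the chain $\t{ind}_K^H \bb{R} \hookrightarrow \t{ind}_K^H\t{res}^G_K U \hookrightarrow \t{res}^G_H U$, and then observes (in the concrete form that a nonzero $x \in W^K$ gives a nonzero map $\t{ind}_K^H \bb{R} \to W$, i.e.\ Frobenius reciprocity plus semisimplicity) that every irreducible with nonzero $K$-fixed points is a summand of $\t{ind}_K^H \bb{R}$. No gaps; the two write-ups differ only in phrasing.
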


\begin{proof} The trivial $G$-representation $\bb{R}$ embeds into $U$, and therefore there is a chain of $H$-embeddings $\t{ind}_K^H \bb{R} \hookrightarrow \t{ind}_K^H\t{res}^G_K U \hookrightarrow \t{res}^G_H U$. If $W$ is an irreducible $H$-representation such that $W^K \neq \{0\}$, then any nonzero $x \in W^K$ determines a nonzero map $\t{ind}_K^H \bb{R} \to W$. Therefore $W$ $H$-embeds into $\t{ind}_K^H \bb{R}$ and $\t{res}^G_H U$.
\end{proof}

The following special case is used in the proof of \cite[Theorem 4.22]{BH}.

\begin{cor} Keep notation as above. If $1 \to_{\c{L}(U)} G$, then $U$ is complete.
\end{cor}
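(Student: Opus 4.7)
The plan is to apply Proposition~\ref{prop:admlinirrep} directly, taking $K = 1$ and $H = G$. The hypothesis $1 \to_{\c{L}(U)} G$ then implies that $U = \t{res}^G_G U$ contains every irreducible $G$-representation with nonzero $1$-fixed points.

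The key observation is that the $1$-fixed points of \emph{any} irreducible $G$-representation $W$ are all of $W$, which is nonzero by definition. Hence $U$ contains a copy of every irreducible real $G$-representation. Since $U$ is a $G$-universe, it contains each of its subrepresentations infinitely often, and therefore contains infinitely many copies of every irreducible. This is precisely the condition for $U$ to be complete.

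There is no real obstacle here; the entire statement is essentially a one-line consequence of Proposition~\ref{prop:admlinirrep} once one notes that $W^1 = W$ for every representation $W$.
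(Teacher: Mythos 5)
Your argument is correct and is exactly the intended one: the paper states this as an immediate corollary of Proposition \ref{prop:admlinirrep} with $K = 1$ and $H = G$, since every irreducible $G$-representation $W$ satisfies $W^1 = W \neq \{0\}$ and a $G$-universe containing every irreducible contains each infinitely often, hence is complete. Nothing is missing.
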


These tricks will only take us so far, because they are based on one-way implications. There are universes $U$ such that $\to_{\c{L}(U)}$ strictly refines $\to_{\c{K}(U)}$ (cf. \cite[Theorem 4.22]{BH}), there are saturated transfer systems that are not realized by linear isometries operads (cf. Examples \ref{ex:K4linisom}--\ref{ex:Sigma3linisom}), and as the next example shows, the relation $K \to_{\c{L}(U)} H$ need not hold even if $\t{res}^G_H U$ contains all irreducible $H$-representations with nonzero $K$-fixed points. 

\begin{ex} \label{ex:linfpirred} Suppose $G = K_4$ is the Klein four-group and keep notation as in Example \ref{ex:K4Steiner}. Let $K = \la a \ra$, $H = K_4$, and consider the universe $U= [\bb{R} \oplus \sigma_a \oplus \sigma_b]^\infty$. Then every irreducible $K_4$-representation with nonzero $\la a \ra$-fixed points embeds into $U$. However, $\la a \ra \not\to_{\c{L}(U)} K_4$ because $\t{ind}_{\{1,a\}}^{K_4}\t{res}^{K_4}_{\{1,a\}} \sigma_b \cong \t{ind}_{\{1,a\}}^{K_4} \sigma \cong \sigma_b \oplus \sigma_c$, and $\sigma_c$ does not embed into $U$.
\end{ex}

Ultimately, we need to start checking relations $K \to_{\c{L}(U)} H$ individually, i.e. we need to compute the universe $\t{res}^G_H U$ and, if it is not complete, the universe $\t{ind}_K^H \t{res}^G_K U$. Fortunately, saturation implies we do not need to consider all possible inclusions $K \subset H$. It is sometimes simpler to study the longest possible transfer relations, because they imply the intermediate relations, and it is sometimes simpler to study the shortest possible relations. The next result will be useful in our analysis of $C_{p^n}$-linear isometries operads.

\begin{defn}Suppose $G$ is a finite group and $K \subset H \subset G$ are subgroups. We say that the pair $(K,H)$ is \emph{irreducible} if $K$ is a maximal, proper subgroup of $H$.
\end{defn}

\begin{prop}\label{prop:simporbLambda} If $\to$ is a saturated $G$-transfer system, then $\to$ is generated by the relation $\{(K,H) \, | \, K \to H \t{ and } (K,H) \t{ is irreducible} \}$.
\end{prop}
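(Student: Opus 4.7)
Let $R = \{(K,H) \mid K \to H \text{ and } (K,H) \text{ is irreducible}\}$ and let $\rightsquigarrow \,\, = \la R \ra$ be the transfer system it generates. One inclusion is immediate: since $\to$ is a transfer system containing $R$, the minimality of $\rightsquigarrow$ gives $\rightsquigarrow \,\, \leq \,\, \to$.

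For the reverse refinement, fix a relation $K \to H$; we must show $K \rightsquigarrow H$. If $K = H$ this is reflexivity, so assume $K \subsetneq H$. Because the interval $[K,H] \subset \b{Sub}(G)$ is a finite poset, we may choose a maximal chain
\[
K = K_0 \subsetneq K_1 \subsetneq \cdots \subsetneq K_n = H,
\]
where maximality guarantees that each pair $(K_{i-1},K_i)$ is irreducible. I would then use the saturation hypothesis on $\to$ twice per link: starting from $K \to H$, saturation applied to the chain $K \subset K_{i-1} \subset H$ gives $K_{i-1} \to H$, and saturation applied to $K_{i-1} \subset K_i \subset H$ then gives $K_{i-1} \to K_i$. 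Hence $(K_{i-1},K_i) \in R$, so $K_{i-1} \rightsquigarrow K_i$ for every $i$. Transitivity of the partial order $\rightsquigarrow$ now chains these together to produce $K = K_0 \rightsquigarrow K_n = H$, as desired.

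The argument is essentially combinatorial and does not require checking conjugation or restriction closure of $R$ directly, since these are absorbed into the closure operation $\la - \ra$. The only subtle point—and perhaps the closest thing to an obstacle—is the proper use of saturation: one must apply it twice (once to trim from the right and once from the left) in order to extract each irreducible link $K_{i-1} \to K_i$ from the original long relation $K \to H$. Everything else is automatic from the finiteness of $\b{Sub}(G)$ and the definition of $\rightsquigarrow$ as the minimum transfer system containing $R$.
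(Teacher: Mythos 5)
Your proposal is correct and matches the paper's argument: both sides take a maximal chain of irreducible links from $K$ to $H$, use saturation to see that each link $K_{i-1} \to K_i$ holds, and then conclude by transitivity of the generated transfer system. Your explicit double application of saturation is exactly what the paper's phrase ``since $K_0 \to K_n$ and $\to$ is saturated, we have $K_i \to K_{i+1}$'' compresses, so there is no substantive difference.
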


\begin{proof} Let $\rightsquigarrow \,\, = \la (K,H) \, | \, K \to H \t{ and } (K,H) \t{ is irreducible} \ra$. Then $\rightsquigarrow$ refines $\to$ by definition. For the other refinement, suppose that $K \to H$ for some subgroups $K \subset H \subset G$. Since $G$ is a finite group, we can choose a (nonunique) chain of subgroups $K = K_0 \subsetneq K_1 \subsetneq \cdots \subsetneq K_n = H$ such that $(K_i,K_{i+1})$ is irreducible for every $i$. Since $K_0 \to K_n$ and $\to$ is saturated, we have $K_i \to K_{i+1}$, and hence $K_i \rightsquigarrow K_{i+1}$ for all $i$. The chain $K = K_0 \rightsquigarrow \cdots \rightsquigarrow K_n = H$ implies $K \rightsquigarrow H$.
\end{proof}

There is not much more we can say about $\to_{\c{L}(U)}$ at this level of generality. We give a few examples, and then specialize to finite cyclic groups.

\begin{ex}\label{ex:K4linisom} Let $G = K_4$ and keep notation as in Example \ref{ex:K4Steiner}. The next table depicts $\to_{\c{L}(U)}$ for a set of $\Sigma_3$-orbit representatives of $\b{Uni}(K_4)$.
	\[
	\begin{array}{|c|c|}
	\hline
	U	&	\to_{\c{L}(U)}	\\
	\hline
	\bb{R}^\infty	&	\kindt	\\
	(\bb{R} \oplus \sigma_c)^\infty	&	\kindkc	\\
	(\bb{R} \oplus \sigma_b \oplus \sigma_c)^\infty	&	\kindabc	\\
	(\bb{R} \oplus \sigma_a \oplus \sigma_b \oplus \sigma_c)^\infty	&	\kinds	\\
	\hline
	\end{array}
	\]
Thus, the saturated $K_4$-transfer systems
	\[
	\kinda \, \t{and} \, \kindab
	\]
are not realized by $K_4$-linear isometries operads, and the inclusion of the second universe into the third is not preserved. Combined with Example \ref{ex:K4Steiner}, we see that the $\Sigma_3$-orbits of the $K_4$-transfer systems
	\[
	\kinda \, \kindab \, \kindkt \, \kindkct
	\]
are not realized by Steiner or linear isometries operads.
\end{ex}

\begin{ex}\label{ex:Q8linisom} Let $G = Q_8$ and keep notation as in Example \ref{ex:Q8Steiner}. The next table depicts $\to_{\c{L}(U)}$ for a set of $\Sigma_3$-orbit representatives of $\b{Uni}(Q_8)$.
	\[
	\begin{array}{|c|c|c|c|}
	\hline
	U	&	\to_{\c{L}(U)}	&	U	&	\to_{\c{L}(U)}	\\
	\hline
	\bb{R}^\infty	&	\qeia	&	(\bb{R} \oplus \bb{H})^\infty	&	\qeib	\\
	(\bb{R} \oplus \sigma_k)^\infty	&	\qeiz	&	(\bb{R} \oplus \sigma_k \oplus \bb{H})^\infty	&	\qeiu	\\
	(\bb{R} \oplus \sigma_j \oplus \sigma_k)^\infty	&	\qeiv	& (\bb{R} \oplus \sigma_j \oplus \sigma_k \oplus \bb{H})^\infty	&	\qeir	\\
	(\bb{R} \oplus \sigma_i \oplus \sigma_j \oplus \sigma_k)^\infty	&	\qeizc	&	(\bb{R} \oplus \sigma_i \oplus \sigma_j \oplus \sigma_k \oplus \bb{H})^\infty	&	\qeiq	\\
	\hline
	\end{array}
	\]
Thus, the saturated $Q_8$-transfer systems
	\[
	\qeix \ \qeiy \, \qeig \, \qeik
	\]
are not realized by $Q_8$-linear isometries operads, and the inclusions of the universes on the second line into the universes on the third are not preserved. Combined with Example \ref{ex:Q8Steiner}, we see that the $\Sigma_3$-orbits of the $Q_8$-transfer systems
	\[
	\qeix \qeiy \qeiw \qeiza \qeic \qeig \qeid \qeih \qeik \qeie \qeii \qeil \qeij \qeim \qeis \qeit \qeio
	\]
are not realized by Steiner or linear isometries operads.
\end{ex}

\begin{ex}\label{ex:Sigma3linisom} Let $G = \Sigma_3$ and keep notation as in Example \ref{ex:Sigma3Steiner}. The transfer systems for $\Sigma_3$-linear isometries operads are
	\[
	\begin{array}{|c|c|}
	\hline
	U	&	\to_{\c{L}(U)}	\\
	\hline
	\bb{R}^\infty	&	\dsia	\\
	(\bb{R} \oplus \sigma)^\infty	&	\dsig	\\
	(\bb{R} \oplus \Delta)^\infty	&	\dsid	\\
	(\bb{R} \oplus \sigma \oplus \Delta)^\infty	&	\dsii	\\
	\hline
	\end{array}
	\]
Thus, the saturated $\Sigma_3$-transfer systems
	\[
	\dsib \,\t{and}\, \dsic 
	\]
are not realized by $\Sigma_3$-linear isometries operads. Combined with Example \ref{ex:Sigma3Steiner}, we see that the $\Sigma_3$-transfer systems
	\[
	\dsib \, \dsic  \, \dsie \, \dsih
	\]
are not realized by Steiner or linear isometries operads.
\end{ex}

In Examples \ref{ex:K4linisom}--\ref{ex:Sigma3linisom}, every saturated transfer system not realized by a linear isometries operad also is not realized by a Steiner operad. We see no reason why this should be true in general, but we do not know any counterexamples.

\subsection{Finite cyclic groups} Let $G = C_n$ for some natural number $n$. We shall describe a systematic method for computing the transfer systems of $C_n$-linear isometries operads, in terms of two-dimensional rotation representations.

\begin{nota}\label{nota:lambda} For any finite cyclic group $C_n$ with chosen generator $g$, let
	\[
	\lambda_n(m) : C_n \to S^1 \cong SO(2)
	\]
be the $C_n$-representation that sends $g$ to $e^{2 \pi i m/n}$. The character of $\lambda_n(m)$ is
	\[
	\chi(g^j) = 2\t{cos}(2\pi mj/n) = e^{2 \pi i mj/n} + e^{-2 \pi i mj/n} .
	\]

Suppose $d$ and $n$ are natural numbers such that $d \mid n$. We write $\t{res}^n_d$ and $\t{ind}_d^n$ for restriction and induction along the inclusion $C_d \hookrightarrow C_n$ that sends the chosen generator of $C_d$ to the $\frac{n}{d}$th power of the chosen generator of $C_n$.
\end{nota}

The representations $\lambda_n(m)$ have the following properties.

\begin{lem}\label{lem:lambdareps} Suppose $m$, $m'$, $n$, and $d$ are natural numbers.
	\begin{enumerate}
		\item{}If $m \equiv m'$ mod $n$, then $\lambda_n(m) = \lambda_n(m')$.
		\item{}There is always an isomorphism $\lambda_n(m) \cong \lambda_n(-m)$.
		\item{}If $d \mid n$, then $\t{res}^n_d \lambda_n(m) = \lambda_d (m)$.
		\item{}If $d \mid n$, then $\t{ind}_d^n \lambda_d(m) \cong \bigoplus_{a=0}^{n/d - 1} \lambda_n(m + da)$.
	\end{enumerate}
\end{lem}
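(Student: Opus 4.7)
The plan is to verify each clause directly from the definitions in Notation \ref{nota:lambda}, with character theory carrying the weight of the induction formula (4).

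Items (1)--(3) are essentially bookkeeping. For (1), $e^{2\pi i m/n}$ depends only on $m \bmod n$, so $\lambda_n(m)$ and $\lambda_n(m')$ send the chosen generator $g$ to the same rotation matrix. For (2), conjugating by a reflection in $O(2)$ carries a rotation by angle $\theta$ to a rotation by angle $-\theta$, giving a $C_n$-equivariant isomorphism $\lambda_n(m) \cong \lambda_n(-m)$; equivalently, the two characters agree because cosine is even. For (3), the prescribed inclusion $C_d \hookrightarrow C_n$ sends the chosen generator of $C_d$ to $g^{n/d}$, which acts through $\lambda_n(m)$ as $e^{2\pi i m(n/d)/n} = e^{2\pi i m/d}$, matching the definition of $\lambda_d(m)$.

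For the induction formula (4), I would compare characters. Since $C_n$ is abelian, the Frobenius induction formula collapses to
\begin{equation*}
\chi_{\t{ind}_d^n \lambda_d(m)}(g^j) =
\begin{cases}
\tfrac{n}{d}\, \chi_{\lambda_d(m)}(g^j) & \t{if } g^j \in C_d, \\
0 & \t{otherwise,}
\end{cases}
\end{equation*}
and by (3) the nonzero value is $(n/d)\bigl(e^{2\pi i mj/n} + e^{-2\pi i mj/n}\bigr)$. On the other hand, using the character formula from Notation \ref{nota:lambda},
\begin{equation*}
\sum_{a=0}^{n/d-1} \chi_{\lambda_n(m+da)}(g^j) = e^{2\pi i mj/n} \sum_{a=0}^{n/d-1} e^{2\pi i ajd/n} + e^{-2\pi i mj/n} \sum_{a=0}^{n/d-1} e^{-2\pi i ajd/n}.
\end{equation*}
Each geometric subsum equals $n/d$ exactly when $(n/d) \mid j$ (that is, when $g^j \in C_d$) and vanishes otherwise, so the two characters agree. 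Since real representations of a finite cyclic group are determined by their characters, (4) follows.

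The only point requiring care is the bookkeeping in (4): I must confirm that the condition $g^j \in C_d$ corresponds to $(n/d) \mid j$ under the inclusion fixed in Notation \ref{nota:lambda}, and note that the shifts $m, m+d, \ldots, m+d(n/d-1)$ really do produce $n/d$ summands before the character computation matches. Both points are immediate, so the argument is otherwise mechanical.
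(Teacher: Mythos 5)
Your proposal is correct and follows essentially the same route as the paper: items (1)--(3) are treated as immediate bookkeeping, and item (4) is verified by comparing the character of $\t{ind}_d^n\lambda_d(m)$ (supported on $C_d$ with value $\tfrac{n}{d}(e^{2\pi i mj/n}+e^{-2\pi i mj/n})$) with the geometric sums coming from $\bigoplus_{a}\lambda_n(m+da)$. The only cosmetic difference is that you make explicit the appeal to real representations being determined by their characters, which the paper leaves implicit.
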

\begin{proof} The first three statements are clear. For the fourth statement, we compute characters. The character of $\t{ind}_d^n \lambda_d(m)$ is
	\[
	\chi(g^j) = \left\{ \begin{array}{cl}
		\frac{n}{d}( e^{2 \pi i mj/n} + e^{-2 \pi i mj /n} )	 &\t{if $\frac{n}{d} \mid j$}	\\
		0		&\t{otherwise}
	\end{array}\right. ,
	\]
and the character of $\bigoplus_{a=0}^{n/d - 1} \lambda_n(m + da)$ is
	\[
	\chi(g^j) = \Big[ e^{2 \pi i mj/n} \cdot \sum_{a=0}^{n/d-1} (e^{2 \pi i dj/n})^a \Big] + \Big[ e^{-2\pi i mj/n} \cdot \sum_{a=0}^{n/d-1} (e^{-2 \pi i dj/n})^a \Big] .
	\]
These two functions are equal.
\end{proof}

The representation $\lambda_n(m)$ is irreducible, unless
	\begin{enumerate}[label=(\alph*)]
		\item{}$m \equiv 0$ mod $n$, in which case $\lambda_n(m) \cong \bb{R} \oplus \bb{R}$, or
		\item{}$n$ is even and $m \equiv n/2$ mod $n$, in which case $\lambda_n(m) \cong \sigma \oplus \sigma$.
	\end{enumerate}
By parts (1) and (2) of Lemma \ref{lem:lambdareps}, it follows that the irreducible, real $C_n$-representations are
	\[
	\begin{array}{|c|c|}
		\hline
		\t{$n$ odd}	&	\t{$n$ even}	\\
		\hline
		\bb{R}	&	\bb{R}	\\
		\lambda_n(1) \cong \lambda_n(n-1)	&	\lambda_n(1) \cong \lambda_n(n-1)	\\
		\lambda_n(2) \cong \lambda_n(n-2)	&	\lambda_n(2) \cong \lambda_n(n-2)	\\
		\vdots	&	\vdots	\\
		\lambda_n( \frac{n-1}{2} ) \cong \lambda_n(\frac{n+1}{2})	& \lambda_n(\frac{n}{2} - 1) \cong \lambda_n(\frac{n}{2} + 1)	\\
			&	\sigma \\
		\hline
		
	\end{array} 
	\]
	
We may treat both cases simultaneously, because every $C_n$-universe contains infinitely many copies of its irreducible subrepresentations.

\begin{lem}\label{lem:Cnunilambda} Every $C_n$-universe $U$ is of the form $U \cong \bigoplus_{i \in I} \lambda_n(i)^\infty$, where $I$ is a subset of $\bb{Z}/n \cong \{0,1, \dots, n-1 \}$ that contains $0$, and which is closed under additive inversion.
\end{lem}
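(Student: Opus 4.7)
The plan is to read the decomposition off from the classification of irreducible real $C_n$-representations listed immediately above the statement, and then repackage that decomposition in terms of the two-dimensional rotation representations $\lambda_n(i)$.

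First, I would note that any $C_n$-representation $U$ decomposes as a direct sum of irreducible subrepresentations, and that the definition of a universe forces each isotypic component of $U$ to be either zero or infinite-dimensional. Using the classification of irreducible real $C_n$-representations given in the excerpt, and letting $M = \lfloor (n-1)/2 \rfloor$, this means
\[
U \;\cong\; \bb{R}^\infty \;\oplus\; \bigoplus_{m \in S} \lambda_n(m)^\infty \;\oplus\; \sigma^{\varepsilon},
\]
where $S \subseteq \{1,\dots,M\}$, $\varepsilon \in \{0,\infty\}$, and $\varepsilon = 0$ when $n$ is odd. The trivial summand $\bb{R}^\infty$ is present because a universe must contain trivial summands.

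Next, I would define the candidate index set
\[
I \;=\; \{0\} \;\cup\; \bigcup_{m \in S}\{m,\, n-m\} \;\cup\; (\{n/2\} \text{ if } \varepsilon = \infty).
\]
This $I$ manifestly contains $0$ and is closed under $i \mapsto -i$ in $\bb{Z}/n$, since $\{m,n-m\}$ is a symmetric pair, and $0$ and $n/2$ (when $n$ is even) are self-inverse.

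Finally, I would verify $U \cong \bigoplus_{i \in I} \lambda_n(i)^\infty$ by collapsing the three types of summands using the absorption identities that hold in any infinite direct sum of $C_n$-representations: for $i = 0$ we use $\lambda_n(0)^\infty \cong (\bb{R} \oplus \bb{R})^\infty \cong \bb{R}^\infty$; for each paired $m \in S$ we use Lemma \ref{lem:lambdareps}(2) to get $\lambda_n(m)^\infty \oplus \lambda_n(n-m)^\infty \cong \lambda_n(m)^\infty \oplus \lambda_n(m)^\infty \cong \lambda_n(m)^\infty$, matching the single copy in the decomposition above; and for $i = n/2$ (when $n$ is even and $\varepsilon = \infty$) we use $\lambda_n(n/2)^\infty \cong (\sigma \oplus \sigma)^\infty \cong \sigma^\infty$.

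There is no serious obstacle here: the only substantive input is the already-recorded classification of irreducible real $C_n$-representations together with Lemma \ref{lem:lambdareps}(2). The only mild subtlety is the bookkeeping between the self-inverse indices $\{0\}$ (and $\{n/2\}$ when $n$ is even), which each contribute a single $\lambda_n(i)^\infty$ that absorbs two irreducible summands, versus the paired indices $\{m,n-m\}$, which contribute two copies of the same $\lambda_n(m)^\infty$ that collapse into one.
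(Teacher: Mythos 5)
Your proof is correct and follows essentially the same route as the paper's: decompose $U$ into isotypic pieces using the classification of irreducible real $C_n$-representations, then absorb the trivial summand into $\lambda_n(0)^\infty$, the $\sigma$-summand (for $n$ even) into $\lambda_n(n/2)^\infty$, and duplicate each two-dimensional summand via $\lambda_n(m) \cong \lambda_n(n-m)$ to make the index set symmetric. The paper states these three rewritings in one sentence; you have simply spelled out the bookkeeping.
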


\begin{proof} The representation $\lambda_n(i)$ is well-defined for every $[i] \in \bb{Z}/n$, by Lemma \ref{lem:lambdareps}. Given an arbitrary $C_n$-universe $U$, rewrite the $\bb{R}^\infty$-summand of $U$ as $\lambda_n(0)^\infty$, and rewrite each $\lambda_n(i)^\infty$-summand as $\lambda_n(i)^\infty \oplus \lambda_n(n-i)^\infty$. If $n$ is even, rewrite any $\sigma^\infty$-summand as $\lambda_n(\frac{n}{2})^\infty$.
\end{proof}

The next result computes the transfer system corresponding to $\c{L}( \bigoplus_{i \in I} \lambda_n(i)^\infty)$ in terms of the translation invariance of $I$ and its reductions. Requiring $I$ to be closed under additive inversion eliminates an ambiguity arising from the isomorphism $\lambda_n(m) \cong \lambda_n(-m)$. 

\begin{prop}\label{prop:admCnlinisom} Let $U = \bigoplus_{i \in I} \lambda_n(i)^\infty$, where $I \subset \bb{Z}/n$ contains $0$ and is closed under additive inversion. Then for any natural numbers $d \mid e \mid n$
	\[
	C_d \to_{\c{L}(U)} C_e	\quad\t{if and only if}\quad	(I \t{ mod } e) + d = (I \t{ mod } e).
	\]
\end{prop}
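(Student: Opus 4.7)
The plan is to translate both sides of the biconditional into statements about subsets of $\bb{Z}/e$, using the criterion that $C_d \to_{\c{L}(U)} C_e$ iff $\t{ind}_{C_d}^{C_e}\t{res}^{C_n}_{C_d} U$ $C_e$-embeds into $\t{res}^{C_n}_{C_e} U$. The key intermediate step is to decompose both of these universes into the $C_e$-representations $\lambda_e(j)$ and read off the index sets that appear.

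First, I would apply Lemma \ref{lem:lambdareps}(3) to obtain
$$\t{res}^{C_n}_{C_e} U \cong \bigoplus_{i \in I} \lambda_e(i)^\infty,$$
so the set of $j \in \bb{Z}/e$ with $\lambda_e(j)$ appearing is $K := I \bmod e$. Combining parts (3) and (4) of Lemma \ref{lem:lambdareps}, I would compute
$$\t{ind}_{C_d}^{C_e}\t{res}^{C_n}_{C_d} U \cong \bigoplus_{i \in I} \bigoplus_{a=0}^{e/d-1} \lambda_e(i + da)^\infty,$$
whose index set is $J := (I + d\bb{Z}) \bmod e$, the closure of $K$ under translation by $d$ inside $\bb{Z}/e$.

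Next, I would reduce the embedding question to the set-inclusion $J \subset K$. Because each $\lambda_e(j)$ with $j \ne 0, e/2$ is irreducible, $\lambda_e(0) \cong \bb{R}^2$, and (when $e$ is even) $\lambda_e(e/2) \cong \sigma^2$, a $C_e$-equivariant embedding between infinite direct sums of $\lambda_e(j)$'s exists iff every isotype of the domain appears in the codomain. Multiplicities cause no trouble since every summand is taken $\infty$-many times, and the ambiguity $\lambda_e(j) \cong \lambda_e(-j)$ is harmless since both $K$ and $J$ are closed under additive inversion (because $I$ is, and $d\bb{Z}$ is). The edge cases $j = 0$ and $j = e/2$ are fine: $0 \in K$ holds by the hypothesis $0 \in I$, and $e/2 \in J$ forces $e/2 \in K$ by the same irreducible-matching argument applied to $\sigma$.

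Finally, I would verify that $J \subset K$ is equivalent to $K + d = K$. The forward direction is immediate since $K + d \subset K + d\bb{Z} = J \subset K$, and finiteness of $K$ upgrades this inclusion to equality. Conversely, if $K + d = K$, iterating gives $K + kd = K$ for all $k$, hence $J = K + d\bb{Z} = K \subset K$. The main obstacle, and the only non-formal point, is confirming that the embedding criterion can genuinely be checked one isotype at a time; but this follows because $U$, $\t{res}^{C_n}_{C_e}U$, and $\t{ind}_{C_d}^{C_e}\t{res}^{C_n}_{C_d}U$ are all $\infty$-fold sums, so the embedding can be assembled summand-by-summand without any finite-multiplicity constraints.
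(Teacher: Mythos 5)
Your proposal is correct and follows essentially the same route as the paper: both invoke the embedding criterion of \cite[Theorem 4.18]{BH}, decompose $\t{res}^n_e U$ and $\t{ind}_d^e\t{res}^n_d U$ via Lemma \ref{lem:lambdareps}(3)--(4), reduce the embedding question to a containment of index sets using the infinite multiplicities, closure under inversion, and the reducible cases $\lambda_e(0)$, $\lambda_e(e/2)$, and then upgrade $(I \bmod e)+d \subset (I \bmod e)$ to equality by finiteness. The only cosmetic difference is that you package all translates at once as $J = K + d\bb{Z}$ where the paper inducts to reduce to the single translate $a=1$.
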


\begin{proof} By \cite[Theorem 4.18]{BH} and Lemma \ref{lem:lambdareps}, we have $C_d \to_{\c{L}(U)} C_e$ if and only if there is a $C_e$-equivariant embedding
	\[
	 \bigoplus_{i \in I} \bigoplus_{a=0}^{e/d-1} \lambda_e(i + da)^\infty \cong \t{ind}_{d}^e\t{res}^n_d U \hookrightarrow \t{res}^n_e U \cong \bigoplus_{i \in I} \lambda_e(i)^\infty .
	\]
We unwind this condition. First, note that we have a $C_e$-equivariant embedding as above if and only if we have $C_e$-embedding $\lambda_e(i + da) \hookrightarrow \bigoplus_{i \in I} \lambda_e(i)^\infty$ for every $\lambda_e(i+da)$ on the left hand side. In turn, we have such embeddings if and only if every such $\lambda_e(i + da)$ is isomorphic to some $\lambda_e(j)$ with $j \in I$, regardless of whether these representations are irreducible or not.

Now $\lambda_e(a) \cong \lambda_e(b)$ if and only if $a \equiv \pm b$ mod $e$. Since $I$ is closed under additive inversion, it follows $C_d \to_{\c{L}(U)} C_e$ if and only if for every $i \in I$ and $a = 0 , \dots, e/d-1$, there is some $j \in I$ such that $i + da \equiv j$ mod $e$. By induction, it is enough to check when $a = 1$. Therefore $C_d \to_{\c{L}(U)} C_e$ if and only if $(I \t{ mod } e) + d \subset (I \t{ mod } e)$, which is equivalent to $(I \t{ mod } e) + d = (I \t{ mod } e)$ because $I$ is finite.
\end{proof}

\begin{ex} The transfer systems for $C_4$-linear isometries operads are
	\[
	\begin{array}{|c|c|}
		\hline
		U	&	\to_{\c{L}(U)}	\\
		\hline
		\lambda_4 (0)^\infty	&	\cppa	\\
		(\lambda_4 (0) \oplus \lambda_4 (1) \oplus \lambda_4 (3))^\infty	&	\cppb \\
		(\lambda_4 (0) \oplus \lambda_4 (2))^\infty	&	\cppd	\\
		(\lambda_4(0) \oplus \lambda_4(1) \oplus \lambda_4(2) \oplus \lambda_4(3))^\infty	&	\cppe	\\
		\hline
	\end{array}
	\]
These are precisely the saturated $C_4$-transfer systems. Since the Steiner operad $\c{K}(\lambda_4(0) \oplus \lambda_4(1) \oplus \lambda_4(3))^\infty$ realizes $\cppc$, every $C_4$-transfer system is realized by some $\c{K}(U)$ or $\c{L}(U)$. The analogous statement for $C_{p^2}$ is true in general (Corollary \ref{cor:CpnSteinerlinisom}). 
\end{ex}

\begin{ex}\label{ex:C6linisom} The transfer systems for $C_6$-linear isometries operads are
	\[
	\begin{array}{|c|c|}
		\hline
		U	&	\to_{\c{L}(U)}	\\
		\hline
		\lambda_6(0)^\infty	&	\indt	\\
		(\lambda_6(0) \oplus \lambda_6(1) \oplus \lambda_6(5))^\infty	&	\indpq	\\
		(\lambda_6(0) \oplus \lambda_6(2) \oplus \lambda_6(4))^\infty	&	\indpqp	\\
		(\lambda_6(0) \oplus \lambda_6(3))^\infty	&	\indpqq	\\
		(\lambda_6(0) \oplus \lambda_6(1) \oplus \lambda_6(2) \oplus \lambda_6(4) \oplus \lambda_6(5))^\infty	&	\indpq	\\
		(\lambda_6(0) \oplus \lambda_6(1) \oplus \lambda_6(3) \oplus \lambda_6(5))^\infty	&	\indpq	\\
		(\lambda_6(0) \oplus \lambda_6(2) \oplus \lambda_6(3) \oplus \lambda_6(4))^\infty	&	\indpq	\\
		(\lambda_6(0) \oplus \lambda_6(1) \oplus \lambda_6(2) \oplus \lambda_6(3) \oplus \lambda_6(4) \oplus \lambda_6(5))^\infty	&	\inds	\\
		\hline
	\end{array}
	\]
We miss the saturated $C_6$-transfer systems $\indp$ and $\indq$, and many inclusions of $(\lambda_6(0) \oplus \lambda_6(1) \oplus \lambda_6(5))^\infty$ and $(\lambda_6(0) \oplus \lambda_6(3))^\infty$ into larger universes are not preserved. 
\end{ex}

\subsection{Two special cases} In this section we restrict the group $G$ even further. We assume $G$ is a finite cyclic group whose order is either a prime power or a product of two distinct primes, and we identify two cases where every saturated $G$-transfer system is realized by a linear isometries operad.

First, suppose $\abs{G}$ is a prime power. Write $G_k = C_{p^k}$ for $k=0,\dots,n$, so that the subgroup lattice of $C_{p^n}$ is
	\[
	\{1\} = G_0 \hookrightarrow G_1 \hookrightarrow \cdots \hookrightarrow G_{n-1} \hookrightarrow G_n = C_{p^n}.
	\]
We choose generators such that each inclusion $G_j \hookrightarrow G_{j+1}$ above sends the generator of $G_j$ to the $p$th power of the generator of $G_{j+1}$.

\begin{thm}\label{thm:LUCpn} Let $\to$ be a $C_{p^n}$-transfer system, where $p$ is a prime and $n > 0$ is a natural number. Then $\to$ is realized by a linear isometries operad if and only if $\to$ is saturated.
\end{thm}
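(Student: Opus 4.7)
The forward direction is already Proposition \ref{prop:BHcond}, so I would focus on realizing every saturated $C_{p^n}$-transfer system $\to$ by some linear isometries operad $\c{L}(U)$.

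First I would translate the problem into combinatorics. Since $\b{Sub}(C_{p^n})$ is a chain $G_0 \subset G_1 \subset \cdots \subset G_n$, the only irreducible pairs are $(G_i, G_{i+1})$, so Proposition \ref{prop:simporbLambda} identifies saturated transfer systems with arbitrary subsets $S \subset \{0,1,\dots,n-1\}$ via $S = \{i : G_i \to G_{i+1}\}$. Next, by Lemma \ref{lem:Cnunilambda} and Proposition \ref{prop:admCnlinisom}, realizing $\to$ amounts to producing a subset $I \subset \bb{Z}/p^n$ (containing $0$, closed under negation) such that, writing $I^{(j)}$ for the image of $I$ in $\bb{Z}/p^j$,
\[
(I^{(i+1)}) + p^i = I^{(i+1)} \iff i \in S \qquad (0 \leq i \leq n-1).
\]
Once the irreducible edges are correctly encoded, the saturation of $\to_{\c{L}(U)}$ (Proposition \ref{prop:BHcond}) plus the fact that $p^d$-closure implies $p^i$-closure for $d \leq i$ handles all non-irreducible transfers automatically.

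The construction of $I$ is inductive on $j$. Set $I^{(1)} = \bb{Z}/p$ if $0 \in S$ and $I^{(1)} = \{0\}$ otherwise. Given $I^{(j)}$, construct $I^{(j+1)} \subset \bb{Z}/p^{j+1}$ as follows: if $j \in S$, take $I^{(j+1)}$ to be the full preimage of $I^{(j)}$ under $\bb{Z}/p^{j+1} \twoheadrightarrow \bb{Z}/p^j$ (which is manifestly $p^j$-closed and symmetric); if $j \notin S$, choose, for each $a \in I^{(j)}$, a small set of lifts $a + k p^j$ so that the pairing $a \mapsto p^j - a$ from $I^{(j)}$ extends to a symmetric set in $\bb{Z}/p^{j+1}$ but the total set is not $p^j$-closed. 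For odd $p$, taking a single lift per element with $k_0 = 0$ and $k_{p^j - a} = p - 1 - k_a$ works; then $I^{(j+1)}$ meets each fiber in exactly one point, which immediately obstructs $p^j$-closure.

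I expect the main obstacle to be the $p=2$ case: the element $a = 2^{j-1} \in \bb{Z}/2^j$ is self-inverse, and the single-lift pairing forces the contradictory requirement $k_a = 1 - k_a$. The remedy is to take \emph{both} lifts of self-inverse elements while still taking a single lift of $0$; since $\{0\}$ is not $2^j$-closed inside any symmetric set containing it, non-closure is preserved. Once the construction is in place, the verification splits into two easy steps: $I^{(j+1)}$ is symmetric and contains $0$ by construction, and the two possible behaviors at level $j$ directly give the required $p^j$-closure dichotomy. Setting $U = \bigoplus_{i \in I^{(n)}} \lambda_{p^n}(i)^\infty$ yields a universe with $\to_{\c{L}(U)} \,=\, \to$, completing the proof.
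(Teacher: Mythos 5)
Your proof is correct, and the overall reduction---via Proposition~\ref{prop:simporbLambda}, Lemma~\ref{lem:Cnunilambda}, and Proposition~\ref{prop:admCnlinisom}, down to producing an index set $I \subset \bb{Z}/p^n$ whose level-$j$ reductions $I^{(j)}$ have the prescribed $p^{j-1}$-translation (in)variance---is the same as in the paper. Where you diverge is in the construction of $I$. The paper writes $I$ in closed form as $\bigl\{\pm(a_1 p^{k_1} + \cdots + a_m p^{k_m}) : 0 \le a_i < p\bigr\}$, where $k_1 < \cdots < k_m$ enumerate $S$, and then verifies invariance level-by-level with a small base-$p$ digit argument; the $\pm$ makes symmetry automatic and there is no prime-dependent case split. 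You instead build $I^{(j)}$ inductively, taking the full preimage when $j \in S$ and a single (negation-paired) lift per element when $j \notin S$. This makes the invariance dichotomy at each step essentially tautological---full fibers are $p^j$-closed, a one-point fiber over $0$ is not---at the price of a genuine wrinkle at $p=2$, where the self-inverse element $2^{j-1}$ forces you to take both of its lifts (you correctly note that non-closure is preserved because the fiber over $0$ still meets $I^{(j+1)}$ in one point). The two constructions generally produce different universes: for $p=3$, $n=3$, $S=\{0,2\}$, the paper's construction gives a $15$-element $I$ while yours gives a $9$-element $I$, both realizing the same transfer system. One small imprecision: the clause \emph{``plus the fact that $p^d$-closure implies $p^i$-closure for $d \le i$\,''} is not really doing work; what closes the argument is simply that $\to$ and $\to_{\c{L}(U)}$ are both saturated and have been shown to have the same irreducible relations, so Proposition~\ref{prop:simporbLambda} forces them to coincide. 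With that sentence dropped or rephrased, the proof reads cleanly.
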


\begin{proof} The ``only if'' direction is Proposition \ref{prop:BHcond}. We prove the ``if'' direction by direct construction. Suppose $\to$ is saturated. By Proposition \ref{prop:simporbLambda}, $\to$ is generated by its irreducible relations.  Thus, there are integers $0 \leq k_1 < \cdots < k_m < n$ such that $\to \,\, = \la (G_{k_i}, G_{k_{i+1}}) \, | \, 1 \leq i \leq m \ra$. Let $I \subset \bb{Z}/p^n$ be the set
	\[
	I = \Big\{ \pm \!( a_1 p^{k_1} + \cdots + a_m p^{k_m} ) \, \Big| \, 0 \leq  a_1 , \dots , a_m < p \Big\} ,
	\]
and let $U = \bigoplus_{i \in I} \lambda_{p^n}(i)^\infty$. We shall prove that $\to \,\, = \,\, \to_{\c{L}(U)}$. To start, note that $\to_{\c{L}(U)}$ is saturated by \cite[p. 17]{BH}, and therefore Proposition \ref{prop:simporbLambda} implies $\to_{\c{L}(U)} \,\, = \la (K,H) \, | \, K \to_{\c{L}(U)} H \t{ and } (K,H) \t{ is irreducible}\ra$. Thus, it will be enough to show that the irreducible relations in $\to_{\c{L}(U)}$ are precisely the pairs $(G_{k_i} , G_{k_{i+1}})$ for $\to$.

Suppose $(G_{k_i} , G_{k_{i+1}})$ is an irreducible generator of $\to$. The set $(I \t{ mod } p^{k_i + 1})$ consists of all residues of the form $\pm ( a_1 p^{k_1} + \cdots + a_i p^{k_i})$ with $0 \leq a_1 , \dots , a_i < p$, and this subset of $\bb{Z}/p^{k_i + 1}$ is closed under $(-) + p^{k_i}$. Therefore $G_{k_i} \to_{\c{L}(U)} G_{k_{i+1}}$.

Now consider an irreducible pair $(G_j,G_{j+1})$ for some $j \neq k_1 , \dots , k_m$. We shall show $G_j \not\to_{\c{L}(U)} G_{j+1}$. We study the cases $j < k_1$, $k_i < j < k_{i+1}$, and $k_m < j$ separately. In each case, it will be enough to show $p^j \notin (I \t{ mod } p^{j+1})$. If $j < k_1$, then $(I \t{ mod } p^{j+1}) = \{0\} \subset \bb{Z}/p^{j+1}$, which does not contain $p^j$. If $k_i < j < k_{i+1}$, then $(I \t{ mod } p^{j+1}) = \{ \pm ( a_1 p^{k_1} + \cdots a_i p^{k_i}) \}$ as above, and $0 \leq a_1 p^{k_1} + \cdots + a_i p^{k_i} < p^j$ for all $0 \leq a_1 , \dots , a_i < p$. Therefore $0 < p^j \mp (a_1 p^{k_1} + \cdots + a_i p^{k_i}) < p^{j+1}$, and hence $p^j \notin (I \t{ mod } p^{j+1})$. The case where $k_m < j$ is similar.
\end{proof}

\begin{cor}\label{cor:CpnSteinerlinisom} Suppose that $p$ is a prime and that $n > 0$ is a natural number. If $n =1$ or $2$, then every $C_{p^n}$-transfer system is realized by some Steiner or linear isometries operad. If $n \geq 3$, then there are $C_{p^n}$-transfer systems that are not realized by any such operad.
\end{cor}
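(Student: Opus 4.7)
The plan is to split into three cases according to the value of $n$, combining Theorem \ref{thm:LUCpn} (which realizes every saturated transfer system by a linear isometries operad) with Corollary \ref{cor:Gdisccyc} (which identifies the Steiner transfer systems as those generated by relations of the form $(H_i, C_{p^n})$).

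For $n=1$, the lattice $\b{Tr}(C_p)$ has only two elements (Figure \ref{fig:Cpn}), both saturated, so both are realized by linear isometries operads. For $n=2$, I would inspect Figure \ref{fig:Cpn} and observe that four of the five elements of $\b{Tr}(C_{p^2})$ are saturated and so are handled by Theorem \ref{thm:LUCpn}. The only remaining transfer system is $\cppc$, whose non-reflexive relations are $C_1 \to C_p$ and $C_1 \to C_{p^2}$. I would note it is generated by the single pair $(C_1, C_{p^2})$, because restricting $C_1 \to C_{p^2}$ along $C_p \subset C_{p^2}$ automatically forces $C_1 \to C_p$; by Corollary \ref{cor:Gdisccyc}, $\cppc$ therefore corresponds to a Steiner operad.

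For $n \geq 3$, I would exhibit an explicit counterexample: let $\to$ be the $C_{p^n}$-transfer system whose only non-reflexive relations are $C_1 \to C_p$ and $C_1 \to C_{p^2}$ (essentially the lift of $\cppc$ to $C_{p^n}$). Checking that this is a valid transfer system is routine, since the subgroup lattice of $C_{p^n}$ is a chain, which makes restriction closure almost immediate and transitivity vacuous. Saturation fails because $C_1 \to C_{p^2}$ holds while $C_p \to C_{p^2}$ does not, so by Theorem \ref{thm:LUCpn} the system $\to$ is not realized by any linear isometries operad. To see $\to$ is also not Steiner, I would apply Corollary \ref{cor:Gdisccyc}: every Steiner transfer system is generated by pairs $(H_i, C_{p^n})$, each of which (when $H_i \subsetneq C_{p^n}$) contributes a non-reflexive relation with target $C_{p^n}$. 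Since $n \geq 3$, both $C_p$ and $C_{p^2}$ lie strictly below $C_{p^n}$, so no such generator can appear in $\to$; the only possible generating set is empty, yielding the trivial transfer system, contradicting non-triviality of $\to$.

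There is no serious obstacle: the small-$n$ cases are a finite enumeration, and the witness for $n \geq 3$ is naturally suggested by the $C_{p^3}$ case, where Example \ref{ex:Cp3CpqSteiner} lists the eight Steiner transfer systems and $\cpppc$ is conspicuously absent both from that list and from the set of saturated transfer systems.
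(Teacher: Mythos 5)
Your proposal is correct and follows essentially the same route as the paper: the same case split, the same handling of $n=1,2$ (saturated systems via Theorem \ref{thm:LUCpn}, the lone non-saturated system $\cppc$ via the Steiner characterization), and the identical witness $\la (1,C_{p^2}) \ra$ for $n \geq 3$. The only cosmetic difference is that you verify the $n\geq 3$ counterexample by hand using Corollary \ref{cor:Gdisccyc}, whereas the paper simply invokes Theorem \ref{thm:notSteinerorlinisom}.
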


\begin{proof} The result for $C_p$ is trivial, because the minimum and maximum transfer systems are always realized by Steiner and linear isometries operads. For $C_{p^2}$, Theorem \ref{thm:discind} ensures that $\cppa$, $\cppc$, $\cppd$, and $\cppe$ are realized by Steiner operads, and Theorem \ref{thm:LUCpn} ensures that $\cppa$, $\cppb$, $\cppd$, and $\cppe$ are realized by linear isometries operads. These transfer systems exhaust $\b{Tr}(C_{p^2})$. If $n \geq 3$, then Theorem \ref{thm:notSteinerorlinisom} implies the $C_{p^n}$-transfer system $\la (1,C_{p^2}) \ra$ is not realized by any Steiner or linear isometries operad.
\end{proof}

Finally, suppose $\abs{G} = pq$ for primes $p < q$, and recall the notational conventions from Figure \ref{fig:CpqK4}. In the remainder of this section, we shall prove the following result for $C_{pq}$-linear isometries operads.

\begin{thm}\label{thm:Cpqlinisom} Suppose $p$ and $q$ are primes such that $p < q$.
	\begin{enumerate}
		\item{}If $p=2$ and $q=3$, then every saturated $C_{pq}$-transfer system except $\indp$ and $\indq$ is realized by a linear isometries operad.
		\item{}If $p=2$ or $3$ and $q>3$, then every saturated $C_{pq}$-transfer system except $\indq$ is realized by a linear isometries operad.
		\item{}If $p,q > 3$, then a $C_{pq}$-transfer system is realized by a linear isometries operad if and only if it is saturated.
	\end{enumerate}
\end{thm}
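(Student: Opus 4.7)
The strategy is to combine Proposition \ref{prop:admCnlinisom} with a Chinese Remainder Theorem analysis to reduce both realizability and non-realizability to concrete combinatorics on subsets of $\bb{Z}/p \times \bb{Z}/q$. First I would enumerate the saturated $C_{pq}$-transfer systems directly from Figure \ref{fig:CpqK4}: exactly seven of the ten are saturated, namely the minimum \indt, the maximum \inds, and the five systems \indp, \indq, \indpq, \indpqp, \indpqq. The three remaining systems \indpqf, \indpqfp, \indpqfq fail condition $(\Lambda)$ because each contains the relation $C_1 \to C_{pq}$ but omits at least one of the intermediate relations $C_p \to C_{pq}$ or $C_q \to C_{pq}$.

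Next I would invoke the CRT isomorphism $\bb{Z}/pq \cong \bb{Z}/p \times \bb{Z}/q$ to rewrite Proposition \ref{prop:admCnlinisom} in geometric form. For $U = \bigoplus_{i \in I}\lambda_{pq}(i)^\infty$ with $I$ regarded as a subset of $\bb{Z}/p \times \bb{Z}/q$, one has $C_1 \to_{\c{L}(U)} C_p$ iff the first projection $\pi_p(I)$ equals $\bb{Z}/p$; $C_1 \to_{\c{L}(U)} C_q$ iff $\pi_q(I) = \bb{Z}/q$; $C_p \to_{\c{L}(U)} C_{pq}$ iff $I$ has the product form $S \times \bb{Z}/q$; $C_q \to_{\c{L}(U)} C_{pq}$ iff $I$ has the product form $\bb{Z}/p \times T$; and $C_1 \to_{\c{L}(U)} C_{pq}$ iff $I = \bb{Z}/p \times \bb{Z}/q$. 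The point is that translation by $p$ in $\bb{Z}/pq$ corresponds to $(0, p \bmod q)$, which generates $\{0\} \times \bb{Z}/q$ since $\gcd(p,q) = 1$, and symmetrically for translation by $q$.

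For realizability, I would exhibit explicit subsets $I$ (always closed under negation and containing the origin) that realize each required saturated system: $I = \{(0,0)\}$ for \indt; $I = \bb{Z}/p \times \bb{Z}/q$ for \inds; $I = \{0\} \times \bb{Z}/q$ for \indpqp; $I = \bb{Z}/p \times \{0\}$ for \indpqq; and $I = (\bb{Z}/p \times \{0\}) \cup (\{0\} \times \bb{Z}/q)$ for \indpq. For \indp, needed in cases (2) and (3), I would take $I = (\bb{Z}/p \times \{0\}) \cup \{(0, y_0), (0, -y_0)\}$ for any nonzero $y_0 \in \bb{Z}/q$; the projection $\pi_q(I) = \{0, y_0, -y_0\}$ has at most three elements and is therefore a proper subset of $\bb{Z}/q$ precisely when $q > 3$, matching the theorem's hypothesis. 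Symmetrically, \indq is realized in case (3) by $I = (\{0\} \times \bb{Z}/q) \cup \{(x_0, 0), (-x_0, 0)\}$ for any nonzero $x_0 \in \bb{Z}/p$, which requires $p > 3$. Verifying that the remaining transfer relations fail in each construction is a direct check against the five criteria above.

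The main obstacle is the non-realizability of \indq when $p \in \{2, 3\}$, and of \indp when $q = 3$. Suppose for contradiction that some $I$ realizes \indq; then $\pi_q(I) = \bb{Z}/q$ while $\pi_p(I)$ is a proper subset of $\bb{Z}/p$ containing $0$ and closed under negation (since $0 \in I$ and $I = -I$). For $p \in \{2, 3\}$, an exhaustive inspection shows that the only such subset of $\bb{Z}/p$ is $\{0\}$, whence $I \subseteq \{0\} \times \bb{Z}/q$. Surjectivity of $\pi_q$ then forces $I = \{0\} \times \bb{Z}/q$, which has product form $S \times \bb{Z}/q$ and therefore also realizes $C_p \to C_{pq}$, contradicting the assumption that $\to_{\c{L}(U)}$ is precisely \indq. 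The non-realizability of \indp when $q = 3$ follows by swapping the roles of $p$ and $q$. Combining the realizability and non-realizability results yields the three cases of the theorem.
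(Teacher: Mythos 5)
Your proposal is correct and follows essentially the same route as the paper: the argument rests on Proposition \ref{prop:admCnlinisom}, realizes the saturated systems by exhibiting explicit index sets $I$, and rules out $\indq$ for $p\in\{2,3\}$ (and $\indp$ for $q=3$) by the same forcing argument as the paper's Lemma \ref{lem:Cpqlinisom2}, namely that $C_1\to C_q$ without $C_1\to C_p$ forces $I$ to be exactly the multiples of $p$, which then yields $C_p\to C_{pq}$. The differences are cosmetic --- you phrase the translation-invariance criteria in CRT coordinates on $\bb{Z}/p\times\bb{Z}/q$ and pick slightly different witness universes, and you treat the $p=2$, $q=3$ case uniformly where the paper simply cites the exhaustive computation in Example \ref{ex:C6linisom} --- just remember to cite Proposition \ref{prop:BHcond} for the ``only if'' half of part (3), which your sketch leaves implicit.
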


Combined with Example \ref{ex:Cp3CpqSteiner}, we deduce the following.

\begin{cor}Suppose $p$ and $q$ are primes such that $p < q$.
	\begin{enumerate}
		\item{}If $p=2$ and $q=3$, then every $C_{pq}$-transfer system except $\indp$ and $\indq$ is realized by a Steiner or linear isometries operad.
		\item{}If $p=2$ or $3$ and $q>3$, then every $C_{pq}$-transfer system except $\indq$ is realized by a Steiner or linear isometries operad.
		\item{}If $p,q > 3$, then every $C_{pq}$-transfer system is realized by a Steiner or linear isometries operad.
	\end{enumerate}
\end{cor}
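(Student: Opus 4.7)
The strategy is to apply Proposition \ref{prop:admCnlinisom}, which reduces realization to combinatorics on subsets of $\bb{Z}/pq$: every $C_{pq}$-universe has the form $U \cong \bigoplus_{i \in I}\lambda_{pq}(i)^\infty$ for some $I \subseteq \bb{Z}/pq$ containing $0$ and closed under negation, and $C_d \to_{\c{L}(U)} C_e$ holds exactly when $(I \bmod e) + d = (I \bmod e)$. Inspection of Figure \ref{fig:CpqK4} identifies the seven saturated $C_{pq}$-transfer systems as $\indt$, $\indp$, $\indq$, $\indpq$, $\indpqp$, $\indpqq$, and $\inds$; Proposition \ref{prop:BHcond} immediately rules out the remaining three, which all contain $C_1 \to C_{pq}$ but omit $C_p \to C_{pq}$ or $C_q \to C_{pq}$. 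The plan is to exhibit explicit $I$ realizing each saturated system that the theorem allows, and then to obstruct $I$ for each excluded case of $\indp$ or $\indq$.

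Five of the saturated systems are realized in all three cases of the theorem: take $I = \{0\}$ for $\indt$, $I = \bb{Z}/pq$ for $\inds$, $I = p\bb{Z}/pq$ for $\indpqp$, and $I = q\bb{Z}/pq$ for $\indpqq$, with Proposition \ref{prop:admCnlinisom} giving the verification via $\gcd(p,q) = 1$. For $\indpq$, I would take the symmetric interval $I = \{0, \pm 1, \ldots, \pm (q-1)/2\}$; because $|I| = q \geq p$ and $I$ is a run of consecutive integers, $I \bmod p = \bb{Z}/p$ and $I \bmod q = \bb{Z}/q$, and because $I$ is not a union of cosets of $p\bb{Z}/pq$ (it contains $0$ and $1$ in different cosets while having only the size of one coset), nor of $q\bb{Z}/pq$ (since $|I| = q$ is not a multiple of $p$), the three unwanted relations fail.

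The more delicate cases are $\indp$ and $\indq$. To realize $\indp$ when $q > 3$, I would take $I = q\bb{Z}/pq \cup \{\pm 1\}$: reduction mod $p$ surjects on $q\bb{Z}/pq$ (giving $C_1 \to C_p$), reduction mod $q$ equals the three-element set $\{0, \pm 1\} \subsetneq \bb{Z}/q$ (blocking $C_1 \to C_q$), and translating $I$ by any of $p$, $q$, or $1$ sends the $p$-element set $q\bb{Z}/pq$ into a disjoint coset that the two extra elements cannot fill. A symmetric choice $I = p\bb{Z}/pq \cup \{\pm 1\}$ realizes $\indq$ whenever $p > 3$. These witnesses complete the ``if'' halves of all three cases.

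The negative assertions are the main obstacle. The key observation is that for $p \in \{2, 3\}$ the only nonempty proper subset of $\bb{Z}/p$ closed under negation and containing $0$ is $\{0\}$. If $I$ were to realize $\indp$ with $p = 2$, $q = 3$, then $I \bmod 3 = \{0\}$ would force $I \subseteq \{0, 3\}$; combined with $I \bmod 2 = \bb{Z}/2$ this forces $I = \{0, 3\}$, which is invariant under $+3$, introducing the unwanted $C_3 \to C_6$. For $\indq$ with $p \in \{2, 3\}$ and any $q > p$, $I \bmod p = \{0\}$ forces $I \subseteq p\bb{Z}/pq$, and surjectivity onto $\bb{Z}/q$ then forces the full subgroup $I = p\bb{Z}/pq$, which yields the unwanted $C_p \to C_{pq}$. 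The whole rigidity argument turns on identifying that small cyclic quotients leave no room for the subset surgery used to realize $\indp$ or $\indq$ when $p, q > 3$.
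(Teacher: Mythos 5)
There is a genuine gap, and it comes from misreading what the corollary asserts. The statement concerns realization by a Steiner \emph{or} linear isometries operad, and in every one of its three cases it includes the three non-saturated $C_{pq}$-transfer systems $\indpqf$, $\indpqfp$, $\indpqfq$ (they are never listed among the exceptions). Your proposal declares these three ``ruled out'' by Proposition \ref{prop:BHcond} and never returns to them; but Proposition \ref{prop:BHcond} only excludes them from realization by $\c{L}(U)$, while the corollary claims they \emph{are} realized --- by Steiner operads. This is exactly the content of Example \ref{ex:Cp3CpqSteiner} (via Corollary \ref{cor:Gdisccyc}/Theorem \ref{thm:discfinab}): the $C_{pq}$-Steiner operads realize precisely $\indt$, $\indpqp$, $\indpqq$, $\indpqf$, $\indpqfp$, $\indpqfq$, $\inds$, and the paper's proof of the corollary is nothing more than combining that example with Theorem \ref{thm:Cpqlinisom}. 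As written, your argument establishes (a version of) Theorem \ref{thm:Cpqlinisom} --- the classification of which \emph{saturated} systems come from $\c{L}(U)$ --- but not the corollary; to close the gap you must supply the Steiner realizations, e.g.\ note that $\indpqf \, = \la (C_1,C_{pq})\ra = \,\to_{\c{K}(U)}$ for $U = (\bb{R}\oplus\lambda_{pq}(1))^\infty$, and similarly for $\indpqfp$ and $\indpqfq$ by adding $\lambda_{pq}(q)$ or $\lambda_{pq}(p)$.

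On the linear isometries portion itself: your route is essentially the paper's (Proposition \ref{prop:admCnlinisom} plus explicit index sets; your obstruction for $\indq$ when $p\in\{2,3\}$, and for $\indp$ when $p=2,q=3$, is the same rigidity argument as Lemma \ref{lem:Cpqlinisom2}, and your witness for $\indq$ is literally the paper's $I = \{0,1,p,\dots,p(q-1),pq-1\}$), with the pleasant difference that you handle $p=2,q=3$ by a short argument rather than the exhaustive computation of Example \ref{ex:C6linisom}. One local slip: for $I = q\bb{Z}/pq \cup \{\pm 1\}$, translation by $q$ does \emph{not} send $q\bb{Z}/pq$ to a disjoint coset --- it fixes that subgroup --- so to rule out $C_q \to_I C_{pq}$ you must instead observe that $1+q \notin I$ (the conclusion is true, but the stated justification fails for that translation). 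Finally, your negative arguments only show the excluded systems are not realized by linear isometries operads; if you intend the exceptions to be genuine non-realizations, you again need the Steiner computation of Example \ref{ex:Cp3CpqSteiner} to see that $\indp$ and $\indq$ are not Steiner-realizable either.
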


Part (1) of Theorem \ref{thm:Cpqlinisom} is just Example \ref{ex:C6linisom}, and Lemmas \ref{lem:Cpqlinisom1} and \ref{lem:Cpqlinisom2} below handle the rest. For any set $I \subset \bb{Z}/pq$ that contains $0$ and is closed under additive inversion, let $\to_{I}$ be the transfer system for $\c{L} ( \bigoplus_{i \in I} \lambda_{pq}(i)^\infty )$.

\begin{lem}\label{lem:Cpqlinisom1} Suppose $p$ and $q$ are prime, $p < q$, and $q>3$. Then we have the following transfer systems.
	\[
	\begin{array}{|c|c|}
	\hline
	I \subset \bb{Z}/pq	&	\to_{I}	\\
	\hline
	\{0\}	&	\indt	\\
	\{0, \pm 1 , \pm 2 , \dots , \pm \lfloor p/2 \rfloor \}	&	\indp	\\
	\{ 0, \pm 1, \pm 2, \dots, \pm \lfloor q/2 \rfloor \}	&	\indpq	\\
	\{ 0,p,2p,\dots, p(q-1) \}	&	\indpqp	\\
	\{ 0,q,2q,\dots, (p-1)q \}	&	\indpqq	\\
	\{ 0,1,2,\dots,pq -1 \}		&	\inds	\\
	\hline
	\end{array}
	\]
\end{lem}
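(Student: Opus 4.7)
The plan is to verify each row of the table by direct application of Proposition~\ref{prop:admCnlinisom}. Since the subgroup lattice of $C_{pq}$ consists only of $C_1, C_p, C_q, C_{pq}$, the nontrivial divisor pairs $(d,e)$ that need to be checked are $(1,p)$, $(1,q)$, $(1,pq)$, $(p,pq)$, and $(q,pq)$. For each $I$ in the table, this reduces to computing $I \bmod p$ and $I \bmod q$, and deciding whether the translations $I+1$, $I+p$, $I+q$ preserve $I$ inside $\bb{Z}/pq$. The hypothesis $q > 3$ (so $q \geq 5$ and $q$ is odd) will be used to rule out accidental coincidences.

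Rows~1 and~6 are immediate: $\{0\}$ admits no nontrivial translation, while $\bb{Z}/pq$ admits all of them. For the ``interval'' rows~2 and~3, I would first observe that the listed elements are pairwise distinct in $\bb{Z}/pq$ since their absolute values are bounded by $\lfloor q/2 \rfloor < pq/2$. In row~2 the interval has length $\geq p-1$ so its image in $\bb{Z}/p$ is all of $\bb{Z}/p$, giving $C_1 \to C_p$; its image in $\bb{Z}/q$ has size at most $p < q$ and fails to be $+1$-stable (this is where $q > 3$ is needed, for example to ensure the shift of $\{0,1,-1\}$ is not contained back in itself when $p=2$), and translations by $p$ or $q$ also fail because the shifts drop a value from the symmetric interval that is not recovered anywhere. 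In row~3 the interval has length $q-1$, so its reduction mod $q$ is all of $\bb{Z}/q$ and mod $p$ is all of $\bb{Z}/p$, giving both $C_1 \to C_p$ and $C_1 \to C_q$; translations by $1$, $p$, $q$ in $\bb{Z}/pq$ all introduce the value $\lfloor q/2\rfloor + 1$ (resp.\ $+p$, $+q$) which lies outside $I$ since it is positive and at most $(q-1)/2 + q < pq - (q-1)/2$.

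For the ``subgroup'' rows~4 and~5, $I$ is the cyclic subgroup of $\bb{Z}/pq$ generated by $p$ or $q$, so $I$ is automatically stable under the corresponding translation, producing $C_p \to C_{pq}$ or $C_q \to C_{pq}$. The other translation fails since $q \notin \la p \ra$ (equivalently, $p \nmid q$) and symmetrically $p \notin \la q \ra$. Reducing modulo $p$ collapses $\la p \ra$ to $\{0\}$ and expands $\la q \ra$ onto all of $\bb{Z}/p$ by B\'ezout (since $\gcd(p,q)=1$), and symmetrically modulo $q$. This pins down which of the two ``small'' transfers $C_1 \to C_p$, $C_1 \to C_q$ appears in each case, matching $\indpqp$ and $\indpqq$ respectively.

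The only genuinely delicate step is row~2 with $p=2$, where $I = \{0, 1, -1\}$ has only three elements mod $q$ and one must check that no cyclic translation stabilizes it; here the assumption $q > 3$ is exactly what prevents $\{0,1,q-1\}+1 = \{1,2,0\}$ from equalling $\{0,1,q-1\}$. Everywhere else the verifications are mechanical size-counting and B\'ezout arguments, so the bulk of the work is simply organizing the five shift checks across the six rows.
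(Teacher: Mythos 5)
Your proposal is correct and takes essentially the same route as the paper: apply Proposition \ref{prop:admCnlinisom} row by row, checking translation invariance of $I$ modulo $pq$, $p$, and $q$ for the five divisor pairs, with $q>3$ entering exactly where you flag it (the $p=2$ case of the second row). The only nit is that the image of row 2 in $\bb{Z}/q$ has size $2\lfloor p/2\rfloor+1$ (equal to $3$, not at most $p$, when $p=2$), but since $q>3$ this is still a proper subset and your parenthetical already addresses that case, so nothing is affected.
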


\begin{proof} We apply Proposition \ref{prop:admCnlinisom} repeatedly. The computations for $I = \{0\}$ and $I = \{0,1,\dots,pq-1\}$ are clear, because these index sets correspond to a trivial universe and a complete universe.

If $I = \{ 0 , 1 , \dots , \lfloor p/2 \rfloor , pq - \lfloor p/2 \rfloor , \dots , pq - 1 \}$, then the inequalities $\lfloor p/2 \rfloor < \lfloor p/2 \rfloor + 1 , p , q < pq - \lfloor p/2 \rfloor$ imply $(I \t{ mod } pq)$ has no translation invariance. We obtain $\lfloor p/2 \rfloor < \lfloor p/2 \rfloor + 1 < q- \lfloor p/2 \rfloor$ using the assumption $q>3$, and this implies $(I \t{ mod } q)$ also has no translation invariance. Finally, $(I \t{ mod } p) = \{0,1,\dots, p-1\}$ is invariant under $(-) + 1$. Thus, the only nontrivial transfer is $C_1 \to_{I} C_p$.

If $I = \{0,1,\dots, \lfloor q/2 \rfloor , pq - \lfloor q/2 \rfloor , \dots , pq - 1\}$, then the inequalities $\lfloor q/2 \rfloor < \lfloor q/2 \rfloor + 1 , \lfloor q/2 \rfloor + p , q < pq - \lfloor q/2 \rfloor$ imply that $I$ has no translation invariance. We have $(I \t{ mod } p) = \{0,1,\dots,p-1\}$ and $(I \t{ mod } q) = \{0,1,\dots, q-1\}$, which both are invariant under $(-)+1$. Thus, the transfers are $C_1 \to_I C_p$ and $C_1 \to_I C_q$.

If $I = \{0,p,2p,\dots,p(q-1)\}$, then $0 < 1 < p$ and $q \notin I$. Therefore $I$ is only invariant under $(-) + p $. Next, $(I \t{ mod } p) = \{0\}$, so it has no translation invariance. Finally, $(I \t{ mod } q) = \{0,1,\dots,q-1\}$, which is invariant under $(-)+1$. Thus, the transfers are $C_p \to_I C_{pq}$ and $C_1 \to_I C_q$. A similar argument works for $I = \{ 0,q,2q,\dots, (p-1)q \}$.
\end{proof}

\begin{lem}\label{lem:Cpqlinisom2} Suppose that $p$ and $q$ are prime and $p < q$. If $p=2$ or $3$, then $\indq$ is not realized by any $C_{pq}$-linear isometries operad. If $p>3$, then it is realized by the $C_{pq}$-linear isometries operad over $U \big( \! \pm \! 1, 0 , p , 2p , \dots , p(q-1) \big) $.
\end{lem}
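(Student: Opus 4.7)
The approach is to use Proposition \ref{prop:admCnlinisom}, which converts each relation $C_d \to_{\c{L}(U)} C_e$ into a translation-invariance condition on the reduction of $I$ mod $e$, together with the fact that $\to_I$ is saturated (Proposition \ref{prop:BHcond}) and hence determined by its irreducible relations (Proposition \ref{prop:simporbLambda}). In $C_{pq}$ there are exactly four irreducible nonidentity pairs: $(C_1, C_p)$, $(C_1, C_q)$, $(C_p, C_{pq})$, $(C_q, C_{pq})$. So $\to_I \,=\, \indq$ is equivalent to having, among these four, exactly $C_1 \to_I C_q$ hold.

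For part (1), I argue by contradiction: if $p \in \{2,3\}$ and $\to_I \,=\, \indq$, then $(I \t{ mod } q) = \bb{Z}/q$ (from $C_1 \to_I C_q$) while $(I \t{ mod } p) \subsetneq \bb{Z}/p$ (from $C_1 \not\to_I C_p$). The key point is that when $p \leq 3$, any subset of $\bb{Z}/p$ containing $0$ and closed under negation is either $\{0\}$ or all of $\bb{Z}/p$. Hence $(I \t{ mod } p) = \{0\}$, so $I \subset p\bb{Z}/pq$; since $|p\bb{Z}/pq| = q$ and $I$ surjects onto $\bb{Z}/q$, this forces $I = p\bb{Z}/pq$. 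But this set is invariant under $(-) + p$, yielding $C_p \to_I C_{pq}$, which contradicts $\to_I \,=\, \indq$.

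For part (2), take $I = \{\pm 1\} \cup \{0, p, 2p, \dots, p(q-1)\}$ and verify the four irreducible conditions directly. Three are routine: $(I \t{ mod } q) = \bb{Z}/q$ because $\gcd(p,q) = 1$, giving $C_1 \to_I C_q$; $(I \t{ mod } p) = \{-1, 0, 1\}$ fails to be invariant under $(-) + 1$ precisely because $p > 3$ forces $2 \notin \{-1, 0, 1\}$ in $\bb{Z}/p$, so $C_1 \not\to_I C_p$; and $1 + p$ is neither $\pm 1$ nor a multiple of $p$ in $\bb{Z}/pq$, giving $C_p \not\to_I C_{pq}$. The only step requiring care is $C_q \not\to_I C_{pq}$: the elements of $I \setminus \{\pm 1\}$ are exactly the multiples of $p$, and $q \pm 1 \not\equiv \pm 1 \pmod{pq}$ for the primes in question, so having $q \pm 1 \in I$ would require $p \mid q-1$ or $p \mid q+1$. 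These cannot both hold since their difference is $2$ and $p > 3$ forces $p \nmid 2$; hence $I$ is not invariant under $(-) + q$. This last step, where the hypothesis $p > 3$ is finally used in an essential way, is the one I expect to demand the most care, and it mirrors exactly the obstruction that rules out $p = 2, 3$ in part (1).
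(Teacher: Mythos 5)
Your proof is correct and follows essentially the same route as the paper: the same index set $I = \{\pm 1\} \cup \{0,p,2p,\dots,p(q-1)\}$ checked against Proposition \ref{prop:admCnlinisom} for the case $p>3$, and for $p\in\{2,3\}$ the same forcing argument that $(I \bmod p)=\{0\}$ implies $I = p(\bb{Z}/pq)$ and hence the unwanted relation $C_p \to_I C_{pq}$. The only differences are cosmetic (organizing the positive case around the four irreducible relations via saturation, and ruling out $C_q \to_I C_{pq}$ using $q\pm 1$ instead of the simpler observation that $q \notin I$ while $0 \in I$).
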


\begin{proof} Suppose first that $p > 3$, and let $I = \{0,1,p,2p,\dots,p(q-1),pq-1\}$. Then $I \subset \bb{Z}/pq$ has no translation invariance because $p < p+1 < 2p$ and $q \notin I$. Next, $(I \t{ mod } p) = \{0,1,p-1\}$ also has no translation invariance because $1 < 2 < p-1$. Finally, $(I \t{ mod } q) = \{0,1,\dots,q-1\}$, which is invariant under $(-)+1$. Therefore $C_1 \to_I C_q$ is the only nontrivial transfer.

Now suppose that $p = 2$ or $3$. We shall prove that $\indq$ cannot be realized by a linear isometries operad. Suppose $I \subset \bb{Z}/pq$ is such that $C_1 \to_I C_q$ but $C_1 \not\to_I C_p$. Then $I \subset p ( \bb{Z}/pq)$, because if $(I \t{ mod } p) \neq \{0\}$, then $\t{res}^{pq}_p U(I)$ is complete. The reduction map $\pi : \bb{Z}/pq \to \bb{Z}/q$ induces a bijection $\pi : p(\bb{Z}/pq) \to \bb{Z}/q$, and since $C_1 \to_I C_q$, we must have $\pi(I) = \bb{Z}/q$. Therefore $I = p(\bb{Z}/pq)$, and therefore $C_p \to_I C_{pq}$. Thus, no $C_{pq}$-linear isometries operad $\c{L}(U(I))$ can realize $\indq$.
\end{proof}

\appendix

\section{Filtering lattices of transfer systems}\label{sec:filtTr}

This appendix describes method for filtering $\b{Tr}(G)$ by simpler sublattices. We find that these filtrations clarify the structure of $\b{Tr}(G)$, and we believe they may be useful in inductive arguments. To be precise, we construct a chain of maximal length in $\b{Tr}(G)$, and then pass to the corresponding sequence of under-lattices. We begin with examples, and then explain the general procedure (Construction \ref{constr:filTr}).

Recall the diagrams for $\b{Tr}(C_{p^3})$ and $\b{Tr}(K_4)$ from Figures \ref{fig:Cpn} and \ref{fig:CpqK4}.

\begin{ex}\label{ex:filTrCp3} The sequence
	\[
	\cpppa \,\, \cpppb \,\, \cpppc \,\, \cpppe \,\, \cpppg \,\, \cpppi \,\, \cpppn
	\]
is a maximal-length chain in $\b{Tr}(C_{p^3})$. The corresponding under-lattices are
	\[
	\begin{array}{ccccccc}
		\begin{tikzpicture}[scale=0.2]
			\node at (0,0) {$$};
			\node at (-1,6) {$$};
			\node at (0,0) {$\cdot$};
		\end{tikzpicture}
		&
		\begin{tikzpicture}[scale=0.2]
			\node at (0,0) {$$};
			\node at (-1,6) {$$};
			\draw (0,0) -- (0,1) ;
		\end{tikzpicture}
		&
		\begin{tikzpicture}[scale=0.2]
			\node at (0,0) {$$};
			\node at (-1,6) {$$};
			\draw (0,0) -- (0,1) -- (0,2) ;
		\end{tikzpicture}
		&
		\begin{tikzpicture}[scale=0.2]
			\node at (0,0) {$$};
			\node at (-1,6) {$$};
			\draw (0,0) -- (0,2) -- (1,3) ;
			\draw (0,0) -- (1,1) -- (1,3) ;
		\end{tikzpicture}
		&
		\begin{tikzpicture}[scale=0.2]
			\node at (0,0) {$$};
			\node at (-1,6) {$$};
			\draw (0,0) -- (0,2) -- (1,3) ;
			\draw (0,0) -- (1,1) -- (1,3) ;
			\draw (0,2) -- (0,3) -- (1,4) -- (1,3) ;
		\end{tikzpicture}
		&
		\begin{tikzpicture}[scale=0.2]
			\node at (0,0) {$$};
			\node at (-1,6) {$$};
			\draw (0,0) -- (0,2) -- (1,3) ;
			\draw (0,0) -- (1,1) -- (1,3) ;
			\draw (0,2) -- (0,3) -- (1,4) -- (1,3) ;
			\draw (1,1) -- (2,2) -- (2,5) -- (1,4) ;
		\end{tikzpicture}
		&
		\begin{tikzpicture}[scale=0.2]
			\node at (-4,0) {$$};
			\node at (0,0) {$$};
			\node at (-1,6) {$$};
			\draw (0,0) -- (0,2) -- (1,3) ;
			\draw (0,0) -- (1,1) -- (1,3) ;
			\draw (0,2) -- (0,3) -- (1,4) -- (1,3) ;
			\draw (1,1) -- (2,2) -- (2,5) -- (1,4) ;
			\draw (0,0) -- (-3,1) -- (-1,3) -- (2,2) ;
			\draw (-3,1) -- (-3,2) -- (0,1) ;
			\draw (-3,2) -- (-3,4) -- (0,3) ;
			\draw (-3,4) -- (-1,6) -- (2,5) ;
			\draw (-1,3) -- (-1,6) ;
		\end{tikzpicture}
	\end{array} .
	\]
\end{ex}

\begin{ex}\label{ex:filTrK4} The sequence
	\[
	\kindt \,\, \kinda \,\, \kindab \,\, \kindabc \,\, \kindkt \,\, \kindkat \,\, \kindkab \,\, \kinds
	\]
is a maximal-length chain in $\b{Tr}(K_4)$. The corresponding under-lattices are
	\[
	\begin{array}{cccccccc}
		\begin{tikzpicture}[scale=0.2]
			\node at (0,0) {$\cdot$};
			\node at (0,6) {$$};
		\end{tikzpicture}
		&
		\begin{tikzpicture}[scale=0.2]
			\node at (-2.5,0) {$$};
			\node at (0,0) {$$};
			\node at (0,6) {$$};
			\draw (0,0) -- (-1.5,1) ;
		\end{tikzpicture}
		&
		\begin{tikzpicture}[scale=0.2]
			\node at (-2.5,0) {$$};
			\node at (0,0) {$$};
			\node at (0,6) {$$};
			\draw (0,0) -- (-1.5,1) -- (-1.5,2) ;
			\draw (0,0) -- (0,1) -- (-1.5,2) ;
		\end{tikzpicture}
		&
		\begin{tikzpicture}[scale=0.2]
			\node at (-2.5,0) {$$};
			\node at (0,0) {$$};
			\node at (0,6) {$$};
			\draw (0,0) -- (-1.5,1) -- (-1.5,2) ;
			\draw (0,0) -- (0,1) -- (-1.5,2) ;
			\draw (0,0) -- (1.5,1) -- (1.5,2) -- (0,1) ;
			\draw (-1.5,1) -- (0,2) -- (1.5,1) ;
			\draw (-1.5,2) -- (0.75,2.5) -- (1.5,2) ;
			\draw (0,2) -- (0.75,2.5) ;
		\end{tikzpicture}
		&
		\begin{tikzpicture}[scale=0.2]
			\node at (-2.5,0) {$$};
			\node at (0,0) {$$};
			\node at (0,6) {$$};
			\draw (0,0) -- (-1.5,1) -- (-1.5,2) ;
			\draw (0,0) -- (0,1) -- (-1.5,2) ;
			\draw (0,0) -- (1.5,1) -- (1.5,2) -- (0,1) ;
			\draw (-1.5,1) -- (0,2) -- (1.5,1) ;
			\draw (-1.5,2) -- (0.75,2.5) -- (1.5,2) ;
			\draw (0,2) -- (0.75,2.5) ;
			\draw (0.75,2.5) -- (0.75,3.5) ;
		\end{tikzpicture}
		&
		\begin{tikzpicture}[scale=0.2]
			\node at (-2.5,0) {$$};
			\node at (0,0) {$$};
			\node at (0,6) {$$};
			\draw (0,0) -- (-1.5,1) -- (-1.5,2) ;
			\draw (0,0) -- (0,1) -- (-1.5,2) ;
			\draw (0,0) -- (1.5,1) -- (1.5,2) -- (0,1) ;
			\draw (-1.5,1) -- (0,2) -- (1.5,1) ;
			\draw (-1.5,2) -- (0.75,2.5) -- (1.5,2) ;
			\draw (0,2) -- (0.75,2.5) ;
			\draw (0.75,2.5) -- (0.75,3.5) ;
			\draw (0.75,3.5) -- (1.5,4) -- (1.5,2) ;
		\end{tikzpicture}
		&
		\begin{tikzpicture}[scale=0.2]
			\node at (-2.5,0) {$$};
			\node at (0,0) {$$};
			\node at (0,6) {$$};
			\draw (0,0) -- (-1.5,1) -- (-1.5,2) ;
			\draw (0,0) -- (0,1) -- (-1.5,2) ;
			\draw (0,0) -- (1.5,1) -- (1.5,2) -- (0,1) ;
			\draw (-1.5,1) -- (0,2) -- (1.5,1) ;
			\draw (-1.5,2) -- (0.75,2.5) -- (1.5,2) ;
			\draw (0,2) -- (0.75,2.5) ;
			\draw (0.75,2.5) -- (0.75,3.5) ;
			\draw (0.75,3.5) -- (1.5,4) -- (1.5,2) ;
			\draw (0,2) -- (0,4) -- (1.5,5) -- (1.5,4) ;
			\draw (0,4) -- (0.75,3.5) ;
		\end{tikzpicture}
		&
		\begin{tikzpicture}[scale=0.2]
			\node at (-2.5,0) {$$};
			\node at (0,0) {$$};
			\node at (0,6) {$$};
			\draw (0,0) -- (-1.5,1) -- (-1.5,2) ;
			\draw (0,0) -- (0,1) -- (-1.5,2) ;
			\draw (0,0) -- (1.5,1) -- (1.5,2) -- (0,1) ;
			\draw (-1.5,1) -- (0,2) -- (1.5,1) ;
			\draw (-1.5,2) -- (0.75,2.5) -- (1.5,2) ;
			\draw (0,2) -- (0.75,2.5) ;
			\draw (0.75,2.5) -- (0.75,3.5) ;
			\draw (0.75,3.5) -- (1.5,4) -- (1.5,2) ;
			\draw (0,2) -- (0,4) -- (1.5,5) -- (1.5,4) ;
			\draw (0,4) -- (0.75,3.5) ;
			\draw (-1.5,2) -- (-1.5,4) -- (0.75,3.5) ;
			\draw (-1.5,4) -- (-1.5,5) -- (0,4) ;
			\draw (-1.5,4) -- (0,5) -- (1.5,4) ;
			\draw (-1.5,5) -- (0,6) -- (1.5,5) ;
			\draw (0,5) -- (0,6) ;
		\end{tikzpicture}
	\end{array} .
	\]
\end{ex}
The chain of $\Sigma_3$-transfer systems
	\[
	\dsia \,\, \dsib \,\, \dsid \,\, \dsie \,\, \dsif \,\, \dsii
	\]
and the chain of $Q_8$-transfer systems
	\[
	\qeia \,\, \qeib \,\, \qeic \,\, \qeig \,\, \qeih \,\, \qeik \,\, \qeil \,\, \qeir \,\, \qeis \,\, \qeit \,\, \qeio \,\, \qeip \,\, \qeiq
	\]
both have maximal length. We leave it to the interested reader to determine the associated filtrations on $\b{Tr}(\Sigma_3)$ and $\b{Tr}(Q_8)$.

Here is a method for producing such chains. Regard a $G$-transfer system $\to$ as a subset of $\b{Sub}(G)^{\times 2}$. The diagonal $\Delta\b{Sub}(G) = \{ (H,H) \, | \, H \subset G \}$ is the initial system, and $\subset_G \,\, = \{ (K,H) \in \b{Sub}(G)^{\times 2} \, | \, K \subset H\}$ is the terminal system. In general, a $G$-transfer system $\to$ corresponds to a $G$-set intermediate to $\Delta\b{Sub}(G)$ and $\subset_G$, because $\to$ is reflexive, refines inclusion, and is closed under conjugation. 

\begin{constr}\label{constr:filTr} We build an increasing sequence of transfer systems connecting $\Delta\b{Sub}(G)$ to $\subset_G$, one orbit at a time.
	
To start, we filter $S = \b{Sub}(G)$. Let $S_{-1} = \varnothing$. Then, assuming that $S_k \subsetneq \b{Sub}(G)$ has been defined, choose a minimal subgroup $H_{k+1} \in \b{Sub}(G) \setminus S_k$ and let $S_{k+1} = S_k \cup \{ gH_{k+1}g^{-1} \, | \, g \in G \}$. Continuing like this, we obtain a finite chain $\varnothing = S_{-1} \subsetneq S_0 \subsetneq \cdots \subsetneq S_N = \b{Sub}(G)$. Let $D_i = S_i \setminus S_{i-1} = [H_i]$ for $0 \leq i \leq N$. This partitions $\b{Sub}(G)$.

Next, we partition the inclusion relation on $\b{Sub}(G)$. For all pairs of integers $0 \leq i < j \leq N$ define
	\[
	\subset_{ij} \,\, = \{ (K,H) \in \b{Sub}(G)^{\times 2} \, | \, K \in D_i ,\, H \in D_j ,\, \t{and } K \subset H \} ,
	\]
and choose an orbit decomposition $\subset_{ij} \,\, = \coprod_{k=1}^{n(i,j)} O(i,j)_k$ with respect to the diagonal conjugation action. The $G$-set $\subset_{ij}$ could be empty, it could be transitive, or it could contain more than one orbit (cf. Example \ref{ex:conjS4}).

Finally, we order the orbits in $\coprod_{i<j} \subset_{ij}$ as
	\begin{align*}
	&O(0,1)_1 ,  \dots , O(0,1)_{n(0,1)} , 
	O(0,2)_1 ,  \dots , O(0,2)_{n(0,2)} ,
	O(1,2)_1 ,  \dots , O(1,2)_{n(1,2)} ,	\\
	&O(0,3)_1 , \dots , O(0,3)_{n(0,3)} ,
	O(1,3)_1 ,  \dots , O(1,3)_{n(1,3)} , O(2,3)_1 , \dots
	\end{align*}
and define a corresponding chain of relations by $\to_0 \,\, = \Delta \b{Sub}(G)$, and
	\[
	\to \!\! (i,j)_k = \Delta\b{Sub}(G) \,\, \sqcup \!\!\!\! \coprod_{O \leq O(i,j)_k} \!\!\!\! O 
	\]
for any $0 \leq i < j \leq N$ and $1 \leq k \leq n(i,j)$. We shall prove that
	\begin{align*}
		\to_0 \,\, &< \,\, \to \!\! (0,1)_1 < \dots < \,\, \to \!\! (0,1)_{n(0,1)} < \,\, \to \!\! (0,2)_1 < \cdots < \,\, \to \!\! (0,2)_{n(0,2)} \\
		&< \,\, \to \!\! (1,2)_1 \,\, < \cdots < \,\, \to \!\! (1,2)_{n(1,2)} < \,\, \to \!\! (0,3)_1 < \cdots < \,\, \to \!\! (N-1,N)_{n(N-1,N)}
	\end{align*}
is a chain of $G$-transfer systems, which connects the minimum system to the maximum system, and which has maximal length among all chains in $\b{Tr}(G)$.
\end{constr}

The chain in Example \ref{ex:filTrCp3} arises from the layers $D_i = [C_{p^i}]$, where $0 \leq i \leq 3$. In this case, we have $\subset_{ij} \,\, = \{(C_{p^i},C_{p^j})\}$ for every $0 \leq i < j \leq 3$, and the order on the orbits of $\coprod_{i < j} \subset_{ij}$ is
	\[
	\{(C_1,C_p)\} ,\, \{(C_{1},C_{p^2})\} ,\, \{(C_{p},C_{p^2})\} ,\, \{(C_{1},C_{p^3})\} ,\, \{(C_{p},C_{p^3})\} ,\, \{(C_{p^2},C_{p^3})\}.
	\]
The chain in Example \ref{ex:filTrK4} arises from the layers
	\[
	D_0 = [1] , \, D_1 = [ \la a \ra ] , \, D_2 = [ \la b \ra ] , \, D_3 = [ \la c \ra ] , \, D_4 = [ K_4 ] .
	\]

Now for the general case. Keep notation as in Construction \ref{constr:filTr}. We begin by analyzing the filtration and partition of $\b{Sub}(G)$.

\begin{lem}\label{lem:filsubG} The subset $S_k \subset \b{Sub}(G)$ is downward-closed and conjugation-invariant for any integer $-1 \leq k \leq N$.
\end{lem}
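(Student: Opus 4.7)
The plan is to prove both properties simultaneously by induction on $k \geq -1$, since the inductive step for downward-closedness will draw on both properties of $S_k$ in a mildly entangled way.

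For the base case $k = -1$, the set $S_{-1} = \varnothing$ is vacuously downward-closed and conjugation-invariant. For the inductive step, assume $S_k$ has both properties, and recall that $S_{k+1} = S_k \cup [H_{k+1}]$, where $[H_{k+1}] = \{gH_{k+1}g^{-1} \mid g \in G\}$ is the full conjugacy class of the chosen minimal element $H_{k+1} \in \b{Sub}(G) \setminus S_k$. Conjugation-invariance of $S_{k+1}$ is then immediate: $S_k$ is conjugation-invariant by hypothesis, and $[H_{k+1}]$ is conjugation-invariant by construction, so their union is too.

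The main (but still mild) obstacle is downward-closedness. Suppose $K \subset H$ with $H \in S_{k+1}$. If $H \in S_k$ already, then $K \in S_k \subset S_{k+1}$ by the inductive hypothesis, and if $K = H$ there is nothing to prove. The remaining case is $K \subsetneq H$ with $H = gH_{k+1}g^{-1}$ for some $g \in G$. Then $g^{-1}Kg \subsetneq H_{k+1}$, so $g^{-1}Kg$ is a proper subgroup of $H_{k+1}$. By the minimality clause in Construction \ref{constr:filTr}, $H_{k+1}$ was chosen as a \emph{minimal} element of $\b{Sub}(G) \setminus S_k$, which forces $g^{-1}Kg \in S_k$. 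Applying conjugation-invariance of $S_k$ (again from the inductive hypothesis) yields $K = g(g^{-1}Kg)g^{-1} \in S_k \subset S_{k+1}$, completing the induction.

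Thus the key point, and the only place the construction of $H_{k+1}$ really enters, is the implication ``proper subgroup of $H_{k+1}$ lies in $S_k$,'' which is precisely the minimality choice made when extending the filtration. Once this is in hand, both closure properties propagate cleanly up the chain $S_{-1} \subsetneq S_0 \subsetneq \cdots \subsetneq S_N$.
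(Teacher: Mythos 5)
Your proof is correct and follows essentially the same route as the paper's: induction on $k$, with conjugation-invariance immediate from attaching a whole conjugacy class, and downward-closedness reduced via conjugation to the minimality of $H_{k+1}$ in $\b{Sub}(G)\setminus S_k$. No gaps.
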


\begin{proof} There is nothing to prove for $S_{-1} = \varnothing$. Now suppose that $S_k$ is downward-closed and conjugation-invariant, and consider $S_{k+1} = S_{k} \cup [H_{k+1}]$. The set $S_{k+1}$ is still conjugation-invariant because we have attached an entire conjugacy class. Now suppose $L \in S_{k+1}$ and $M \subsetneq L$. We must prove that $M \in S_{k+1}$. If $L \in S_k$, then this follows because $S_k$ is downward-closed. If $L \in H_{k+1}$, then $L = gH_{k+1} g^{-1}$ for some $g \in G$, and hence $g^{-1}M g \subsetneq H_{k+1}$. Therefore $g^{-1} M g \in S_k$, because we chose $H_{k+1}$ as a minimal element of $\b{Sub}(G) \setminus S_k$, and therefore $M \in S_k \subset S_{k+1}$ because $S_k$ is conjugation-invariant.
\end{proof}

\begin{lem}\label{lem:layersubG} Suppose that $K \subset H$ and $H \in D_j$. Then $K \in D_i$ for some $i \leq j$, and $i=j$ if and only if $K = H$.
\end{lem}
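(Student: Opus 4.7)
The plan is to derive both claims directly from Lemma \ref{lem:filsubG} together with the fact that $\{D_0,\dots,D_N\}$ partitions $\b{Sub}(G)$.

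First I would argue that $K \in D_i$ for some $i \leq j$. Since $H \in D_j \subset S_j$ and $S_j$ is downward-closed by Lemma \ref{lem:filsubG}, the containment $K \subset H$ forces $K \in S_j$. By construction $S_j = D_0 \sqcup D_1 \sqcup \cdots \sqcup D_j$, so $K \in D_i$ for some $0 \leq i \leq j$.

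For the second assertion, the ``if'' direction is trivial. For ``only if'', suppose $K$ and $H$ both lie in $D_j = [H_j]$. Then $K$ and $H$ are each conjugate to $H_j$, so in particular $\abs{K} = \abs{H_j} = \abs{H}$. Combined with the containment $K \subset H$ and the finiteness of $H$, this forces $K = H$.

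The only step that could conceivably cause trouble is the downward-closure of $S_j$, but that is exactly what Lemma \ref{lem:filsubG} provides. There are no serious obstacles.
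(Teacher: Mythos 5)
Your proof is correct and follows essentially the same route as the paper's: downward-closure of $S_j$ from Lemma \ref{lem:filsubG} gives $K \in S_j = \coprod_{i \le j} D_i$, and for the equality case you use the order-equality of conjugates together with finiteness of $H$, exactly as the paper does.
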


\begin{proof} We have $H \in D_j \subset S_j$ and $K \subset H$. Therefore $K \in S_j$ by Lemma \ref{lem:filsubG}, and therefore $K \in D_i$ for some $i \leq j$ because $S_j = \coprod_{i=0}^j D_i$. If $i = j$, then $K$ and $H$ are conjugate. Therefore $K = H$ because $\abs{K} = \abs{H}$, $K \subset H$, and both sides are finite. If $K = H$, then $K \in D_i \cap D_j$, and hence $i=j$ because the sets $D_i$ are disjoint.
\end{proof}

Next, we consider the induced partition of the inclusion relation on $S_m$.

\begin{lem}\label{lem:partitioninc} For any $0 \leq m \leq N$, the set $\subset_m \,\, = \{ (K,H) \in S_m^{\times 2} \, | \, K \subset H \}$ is partitioned into the disjoint union
	\[
	\Delta S_m \,\, \sqcup \!\!\!\! \coprod_{0 \leq i < j \leq m} \!\!\!\! \subset_{ij} .
	\]
\end{lem}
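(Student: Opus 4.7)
The plan is to simply read off the partition from the layer decomposition $S_m = \coprod_{i=0}^m D_i$ and apply Lemma \ref{lem:layersubG} to relate the layers of $K$ and $H$ when $K \subset H$.

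More precisely, I would fix $(K,H) \in \,\, \subset_m$. Since the $D_i$'s partition $S_m$, there exist unique $0 \leq i, j \leq m$ with $K \in D_i$ and $H \in D_j$. Lemma \ref{lem:layersubG} then forces $i \leq j$, with equality if and only if $K = H$. Thus $(K,H)$ either lies on the diagonal $\Delta S_m$ (when $i=j$, i.e. $K=H$) or lies in $\subset_{ij}$ for the unique pair $i < j$ determined by $K$ and $H$. This shows simultaneously that every element of $\subset_m$ lies in one of the claimed pieces and that the pieces are disjoint (since the indices $i,j$ are uniquely determined). The reverse containment is immediate: $\Delta S_m \subset \,\, \subset_m$ by reflexivity of inclusion, and each $\subset_{ij} \,\, \subset \,\, \subset_m$ by the definition of $\subset_{ij}$ together with $D_i, D_j \subset S_m$.

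There is no real obstacle here; the lemma is essentially a bookkeeping statement, and all of the content has already been packaged into Lemma \ref{lem:layersubG}. The only thing to be slightly careful about is that the indices $i,j$ with $K \in D_i$ and $H \in D_j$ genuinely are unique, which is automatic because $\{D_i\}_{i=0}^N$ is a partition of $\b{Sub}(G)$ by construction.
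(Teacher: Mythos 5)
Your proof is correct and follows essentially the same route as the paper: both arguments use the partition $S_m = \coprod_{i=0}^m D_i$ together with Lemma \ref{lem:layersubG} to place each pair $(K,H)$ in exactly one piece, and handle the reverse containment by reflexivity and the inclusions $D_i \subset S_m$. No gaps.
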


\begin{proof} The sets $\Delta S_m$, $\subset_{01}$, $\subset_{02}$, $\subset_{12}$, \dots are pairwise disjoint because $\subset_{ij} \,\, \subset D_i \times D_j$ and the sets $D_0$, $D_1$, \dots are pairwise disjoint.

The containment $\Delta S_m \subset \,\, \subset_m$ follows from the reflexivity of inclusion, and for every $0 \leq i < j \leq m$, the containment $\subset_{ij} \,\, \subset \,\, \subset_m$ follows from the inclusions $D_k \subset S_k \subset S_m$ for $k=i,j$. Therefore $\Delta S_m \sqcup \coprod_{i<j} \subset_{ij}$ is contained in $\subset_m$.

Now suppose $(K,H) \in \,\, \subset_m$. Then $K \in D_i$ and $H \in D_j$ for some $i,j \leq m$, because $S_m = \coprod_{i=0}^m D_i$. Lemma \ref{lem:layersubG} implies $i \leq j$, and if $i < j$, then $(K,H) \in \,\, \subset_{ij}$. If $i = j$, then $K=H$ by Lemma \ref{lem:layersubG}, and therefore $(K,H) \in \Delta S_m$.
\end{proof}

Finally, we analyze the relations $\to \!\! (i,j)_k$. Note that there are refinements
	\[
	\Delta \b{Sub}(G) \, \sqcup \!\!\!\!\!\! \coprod_{0 \leq a < b < j} \!\!\!\!\!\!\! \subset_{ab} \, \sqcup \!\! \coprod_{0 \leq a < i} \!\!\! \subset_{aj} 
	\,\,\, \leq \,\,\, \to \!\! (i,j)_k \,\,\, \leq \,\,\,
	\Delta \b{Sub}(G) \, \sqcup \!\!\!\!\!\! \coprod_{0 \leq a < b < j} \!\!\!\!\!\!\! \subset_{ab} \, \sqcup \!\! \coprod_{0 \leq a \leq i} \!\!\! \subset_{aj} .
	\]

\begin{prop}\label{prop:filTr} The initial $G$-transfer system is $\to_0$, the terminal $G$-transfer system is $\to \!\! (N-1,N)_{n(N-1,N)}$, and the binary relation $\to \!\! (i,j)_k$ is a $G$-transfer system for every  $0 \leq i < j \leq N$ and $1 \leq k \leq n(i,j)$. Moreover, the chain
	\begin{align*}
		\to_0 \,\, &< \,\, \to \!\! (0,1)_1 < \dots < \,\, \to \!\! (0,1)_{n(0,1)} < \,\, \to \!\! (0,2)_1 < \cdots < \,\, \to \!\! (0,2)_{n(0,2)} \\
		&< \,\, \to \!\! (1,2)_1 \,\, < \cdots < \,\, \to \!\! (1,2)_{n(1,2)} < \,\, \to \!\! (0,3)_1 < \cdots < \,\, \to \!\! (N-1,N)_{n(N-1,N)}
	\end{align*}
has maximal length among all chains in $\b{Tr}(G)$.
\end{prop}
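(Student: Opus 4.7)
The plan is to identify the endpoints, check that each $\to(i,j)_k$ is a transfer system, and then count orbits to get maximality. By construction $\to_0 = \Delta\b{Sub}(G)$, the minimum transfer system. Lemma \ref{lem:partitioninc} with $m=N$ gives $\subset_G \,\, = \,\, \Delta\b{Sub}(G) \sqcup \coprod_{0 \leq i<j \leq N} \subset_{ij}$, so the final system in the chain is exactly the maximum transfer system $\subset_G$.

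To show each $\to(i,j)_k$ is a transfer system, the easy axioms are immediate: reflexivity from $\Delta\b{Sub}(G) \subset \,\, \to(i,j)_k$; antisymmetry and refinement of inclusion from $\to(i,j)_k \,\subset\, \subset_G$; and conjugation-closure because each $O(a,b)_c$ is a single $G$-orbit under diagonal conjugation. The substantive content is transitivity and restriction-closure, both of which I would verify using Lemma \ref{lem:layersubG}: any strict inclusion $K \subsetneq H$ with $H \in D_c$ forces $K \in D_a$ for some $a<c$.

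For transitivity, suppose $(K,L), (L,M) \in \,\, \to(i,j)_k$ with $K \subsetneq L \subsetneq M$, and write $K \in D_a$, $L \in D_b$, $M \in D_c$. Then $a<b<c$, and $(L,M) \in \,\, \subset_{bc}$ forces $c \leq j$. If $c<j$, then $(K,M) \in \,\, \subset_{ac}$ was processed strictly before $O(i,j)_1$. If $c=j$, then the orbit of $(L,M)$ in $\subset_{bj}$ precedes or equals $O(i,j)_k$, hence $b \leq i$; so $a<b \leq i<j$, placing $(K,M) \in \,\, \subset_{aj}$ in an orbit $O(a,j)_\bullet$ with $a<i$, strictly before $O(i,j)_1$. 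For restriction, suppose $(K,H) \in \,\, \to(i,j)_k$ and $L \subset H$. The case $K \cap L = L$ is trivial. Otherwise, write $K \cap L \in D_{a'}$, $L \in D_b$, $H \in D_c$, so $a'<b \leq c \leq j$. If $b=c$ then $L=H$ and $(K \cap L, L) = (K, H)$; if $b<c$ then $(K \cap L, L) \in \,\, \subset_{a'b}$ with $b<j$, so it was processed before $O(i,j)_1$.

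For maximality, any chain in $\b{Tr}(G)$ adds at least one $G$-orbit of $\subset_G$ at each covering step (since transfer systems are conjugation-invariant), so the chain length is bounded by $\sum_{i<j} n(i,j)$, the total number of non-diagonal $G$-orbits in $\subset_G$. Our chain adds exactly one orbit per step, achieving this bound. The main obstacle is the transitivity and restriction check: it works precisely because the lexicographic ordering ``$j$ first, then $i$'' processes every orbit in $\subset_{ab}$ with $b<j$ before any orbit in $\subset_{\bullet j}$, matching exactly the closure conditions that Lemma \ref{lem:layersubG} imposes on any intermediate subgroup between the endpoints of a relation.
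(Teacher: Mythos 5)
Your proof is correct and follows essentially the same route as the paper: both arguments rest on Lemma \ref{lem:layersubG}, the partition of $\subset_G$ into conjugation orbits, and the ordering of those orbits by target layer first and source layer second, with maximality coming from counting orbits added per covering step. The only stylistic difference is that the paper packages the verification inductively (assume the previous relation $\rightsquigarrow$ in the chain is a transfer system and check what happens when a pair lies in the newly added orbit $O(i,j)_k$), whereas you verify restriction-closure and transitivity for each $\to\!\!(i,j)_k$ directly by locating the relevant subgroups in the layers $D_a, D_b, D_c$ and comparing positions in the orbit order.
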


\begin{proof} It is clear that $\to_0 \,\, = \Delta \b{Sub}(G)$ is the initial transfer system, and Lemma \ref{lem:partitioninc} implies that $\to \!\! (N-1,N)_{n(N-1,N)}$ is the terminal transfer system. Now suppose that $0 \leq i < j \leq N$ and $1 \leq k \leq n(i,j)$, and write $\to \!\! (i,j)_k = \,\, \rightsquigarrow \sqcup \,O(i,j)_k$. Assume inductively that $\rightsquigarrow$ is a $G$-transfer system. We shall show that $\to \!\! (i,j)_k$ is also a $G$-transfer system.

The $G$-refinements $\Delta \b{Sub}(G) \leq \,\, \to \!\! (i,j)_k \leq \,\, \subset_G$ imply that $\to \!\! (i,j)_k$ is reflexive, is closed under conjugation, and refines inclusion. Antisymmetry follows.

Now for restriction. Suppose $(K,H) \in \,\, \to \!\! (i,j)_k$ and $L \subset H$. Then $K \subset H$ as well. If $(K,H) \in \,\, \rightsquigarrow$, then  $(K \cap L, L) \in \,\, \rightsquigarrow \,\, \subset \,\, \to \!\! (i,j)_k$ by induction. If $L = H$, then $(K \cap L,L) = (K,H) \in \,\, \to \!\! (i,j)_k$ by assumption. Thus, we may assume $(K,H) \in O(i,j)_k$ and $L \subsetneq H$. We have $H \in D_j$, and since $K \cap L \subset L \subsetneq H$, it follows $K \cap L , L \in S_{j-1}$ by Lemma \ref{lem:layersubG}. By Lemma \ref{lem:partitioninc}, $(K \cap L , L) \in \,\, \subset_{j-1} \,\, \subset \,\, \to \!\! (i,j)_k$.

Next, we consider transitivity. Suppose $(J,K) , (K,H) \in \,\, \to \!\! (i,j)_k$. Then $J \subset K \subset H$, and if either $J = K$ or $K = H$, then $(J,H) \in \,\, \to \!\! (i,j)_k$ is immediate. Thus, we may assume $J \subsetneq K \subsetneq H$. If $(J,K) , (K,H) \in \,\, \rightsquigarrow$, then $(J,H) \in \,\, \rightsquigarrow \,\, \subset \,\, \to \!\! (i,j)_k$ by induction. Thus, we may also assume that one of $(J,K)$ and $(K,H)$ is in $O(i,j)_k$. Since $(K,H) \in \,\, \to \!\! (i,j)_k$, there are $b < c \leq j$ such that $K \in D_b$ and $H \in D_c$, by Lemma \ref{lem:layersubG}. Therefore $(J,K) \notin O(i,j)_k$, which implies $(K,H) \in O(i,j)_k$. Thus, $K \in D_i$, $H \in D_j$, and there is $a < i$ such that $J \in D_a$, by Lemma \ref{lem:layersubG} again. It follows $(J,H) \in \,\, \subset_{aj} \,\, \subset \,\, \to \!\! (i,j)_k$.

Finally, note that every $G$-transfer system is the disjoint union of $\Delta \b{Sub}(G)$ with at most $M = \sum_{i<j} n(i,j)$ orbits in $\subset_G \!\! \setminus \Delta \b{Sub}(G) = \coprod_{ i < j} \subset_{ij}$. Moreover, each proper refinement $\to \,\, < \,\, \to'$ increases the number of $G$-orbits. Thus, a chain in $\b{Tr}(G)$ can have length at most $M+1$, and $\to_0 \,\, < \cdots < \,\, \to \!\! (N-1,N)_{n(N-1,N)}$ attains this bound.
\end{proof}

\section{Generating transfer systems}\label{sec:genindsys}

This appendix explains how to generate a transfer system from a prescribed set of relations. We describe the basic technique (Construction \ref{constr:transysgen}), calculate a few general cases (Propositions \ref{prop:multitsKnorm} and \ref{prop:indGorbab}), and then reinterpret our construction in terms of indexing systems and indexing categories (Propositions \ref{thm:indgenO} and \ref{prop:indgenG}).

\begin{constr}\label{constr:transysgen}Suppose $G$ is a finite group, and $R$ is binary relation on $\b{Sub}(G)$ that refines inclusion, i.e. if $K R H$, then $K \subset H$. Define
	\begin{align*}
		R_0	&:=	R ,	\\
		R_1	&:=	\bigcup_{(K,H) \in R_0} \{ (gKg^{-1} , gHg^{-1}) \, | \, g \in G \}	\\
		R_2	&:=	\bigcup_{(K,H) \in R_1} \{ (L \cap K,L) \, | \, L \subset H \}	\\
		R_3	&:= \Bigg\{ (K,H) \, \Bigg| \, \begin{array}{c}
			\t{there is $n \geq 0$ and subgroups $H_0 , H_1 , \cdots , H_n \subset G$} \\
			\t{such that $K = H_0 R_2 H_1 R_2 \cdots R_2 H_n = H$}
			\end{array}
		\Bigg\} .
	\end{align*}
Thus, we close $R$ under conjugation to get $R_1$, we close $R_1$ under restriction to get $R_2$, and we take the reflexive and transitive closure of $R_2$ to get $R_3$.
\end{constr}

\begin{thm}\label{thm:transysgen}Suppose $R$ is a binary relation on $\b{Sub}(G)$ that refines inclusion. Then $\la R \ra := R_3$ is the transfer system generated by $R$, i.e. $R_3$ the smallest transfer system that contains $R$.
\end{thm}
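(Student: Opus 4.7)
The plan is to verify three things in sequence: that $R_3$ contains $R$, that $R_3$ is a transfer system, and that $R_3$ is contained in every transfer system that contains $R$. Containment of $R$ is immediate from the chain $R = R_0 \subset R_1 \subset R_2 \subset R_3$ (using reflexivity of inclusion to embed $R_1$ in $R_2$ via $L = H$). The minimality statement is also formal: any transfer system $\rightsquigarrow$ containing $R$ must be closed under conjugation, so it contains $R_1$; it must be closed under restriction, so it contains $R_2$; and it must be reflexive and transitive, so it contains $R_3$.

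The substantive work is showing $R_3$ is a transfer system, i.e.\ that it satisfies the six conditions of Definition \ref{defn:transys}. Reflexivity and transitivity are built in at step $R_3$. Refinement of inclusion can be tracked inductively through each $R_i$: $R_0$ refines inclusion by hypothesis, $R_1$ does because conjugation preserves $\subset$, $R_2$ does because $L \cap K \subset L$, and $R_3$ does because a chain of subset relations is a subset relation. Antisymmetry then follows from refinement of inclusion.

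The main obstacles are closure of $R_3$ under conjugation and restriction. For conjugation, it suffices to verify that $R_2$ is closed under conjugation (since conjugation commutes with forming chains), and this reduces to the observation that conjugating the restriction $(L \cap K, L)$ of $(K,H) \in R_1$ produces the restriction $(gLg^{-1} \cap gKg^{-1}, gLg^{-1})$ of $(gKg^{-1}, gHg^{-1}) \in R_1$. For restriction, I would first check that $R_2$ is closed under restriction: if $(K',H') \in R_2$ arises as the restriction of $(K,H) \in R_1$ along $H' \subset H$, and $L \subset H'$, then $L \cap K' = L \cap K$ and $L \subset H$, so $(L \cap K', L) \in R_2$. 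Now given a chain $K = H_0 \, R_2 \, H_1 \, R_2 \, \cdots \, R_2 \, H_n = H$ and a subgroup $L \subset H$, I set $L_n = L$ and $L_{i-1} = L_i \cap H_{i-1}$ for $i = n, \ldots, 1$. Since $R_2$ is closed under restriction and $L_i \subset H_i$, we obtain $L_{i-1} = L_i \cap H_{i-1} \, R_2 \, L_i$ at each step. Because the $H_i$ form an ascending chain (refinement of inclusion), the telescoping intersection collapses to $L_0 = L \cap H_0 = L \cap K$, giving a chain witnessing $(L \cap K, L) \in R_3$.

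The trickiest point to present carefully is the telescoping step, since one must verify that the natural restriction targets $L_i$ are nested correctly so that the restriction axiom can be applied at each link; the ascending nature of the $H_i$ is exactly what makes this work.
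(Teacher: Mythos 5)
Your proposal is correct and follows essentially the same route as the paper: reduce everything to showing $R_3$ is a transfer system, observe that $R_2$ is already closed under conjugation and restriction and refines inclusion, conjugate chains for conjugation-closure, and restrict a chain $K = H_0 \, R_2 \cdots R_2 \, H_n = H$ along $L \subset H$ by intersecting with the $H_i$ (your recursively defined $L_i$ coincide with the paper's $L_i = L \cap H_i$ precisely because the chain is ascending). The only difference is expository, in that you spell out the closure properties of $R_2$ that the paper leaves as an observation.
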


\begin{proof} Let $R$ be a binary relation on $\b{Sub}(G)$ that refines inclusion. Then $R = R_0 \subset R_1 \subset R_2 \subset R_3$, and if $S$ is any $G$-transfer system that contains $R$, then its closure properties imply that it must also contain $R_3$. Thus, the argument will be complete once we prove that $R_3$ is a transfer system.

To start, observe that $R_2$ is closed under conjugation and restriction, and that it refines inclusion. Now consider $R_3$. It is a preorder by construction, and it refines inclusion because $R_2$ does. Therefore $R_3$ is also antisymmetric. Conjugating $R_2$-chains proves that $R_3$ is closed under conjugation. To see that $R_3$ is closed under restriction, suppose that the chain $K = H_0 R_2 H_1 R_2 \cdots R_2 H_n = H$ witnesses the relation $K R_3 H$, and that $L \subset H$. Let $L_i = L \cap H_i$. Restricting the relation $H_i R_2 H_{i+1}$ to $L_{i+1}$ yields $L_i = (L_{i+1} \cap H_i) R_2 L_{i+1}$ for $0 \leq i < n$. We obtain a chain $(L \cap K) = L_0 R_2 L_1 R_2 \cdots R_2 L_n = L$ that witnesses $(L \cap K) R_3 L$.
\end{proof}

Here is how Construction \ref{constr:transysgen} works in practice.

\begin{ex}\label{ex:genS3ts} We compute the $\Sigma_3$-transfer system generated by $C_2 \to \Sigma_3$, where $C_2 = \la (12) \ra$. Recall the notation from Figure \ref{fig:S3}.
	\[
	\begin{array}{|c|c|c|c|}
	\hline
	R_0	&	R_1	&	R_2	&	R_3	\\
	\hline
	\begin{tikzpicture}[scale=0.4,baseline=0.2mm]
		\node(S) at (0,0) {$\cdot$};
		\node(W) at (-1.5,1) {$\cdot$};
		\node(C) at (0,1) {$\cdot$};
		\node(E) at (1.5,1) {$\cdot$};
		\node(N) at (0,2) {$\cdot$};
		\node(3) at (0.6,1.2) {$\cdot$};
		
%		\draw (0,0) -- (-1.5,1) ;
%		\draw (0,0) -- (0,1) ;
%		\draw (0,0) -- (1.5,1) ;
%		
%		\draw (0,0) -- (0.6,1.2) ;
%		
%		\draw plot [smooth, tension=1.5] coordinates {(0,0) (-0.5,1) (0,2)};
		\draw (-1.5,1) -- (0,2) ;
%		\draw (0,1) -- (0,2) ;
%		\draw (1.5,1) -- (0,2) ;
%		\draw (0.6,1.2) -- (0,2) ;
	\end{tikzpicture}
	&
	\begin{tikzpicture}[scale=0.4,baseline=0.2mm]
		\node(S) at (0,0) {$\cdot$};
		\node(W) at (-1.5,1) {$\cdot$};
		\node(C) at (0,1) {$\cdot$};
		\node(E) at (1.5,1) {$\cdot$};
		\node(N) at (0,2) {$\cdot$};
		\node(3) at (0.6,1.2) {$\cdot$};
		
%		\draw (0,0) -- (-1.5,1) ;
%		\draw (0,0) -- (0,1) ;
%		\draw (0,0) -- (1.5,1) ;
%		
%		\draw (0,0) -- (0.6,1.2) ;
%		
%		\draw plot [smooth, tension=1.5] coordinates {(0,0) (-0.5,1) (0,2)};
		\draw (-1.5,1) -- (0,2) ;
		\draw (0,1) -- (0,2) ;
		\draw (1.5,1) -- (0,2) ;
%		\draw (0.6,1.2) -- (0,2) ;
	\end{tikzpicture}
	&\begin{tikzpicture}[scale=0.4,baseline=0.2mm]
		\node(S) at (0,0) {$\cdot$};
		\node(W) at (-1.5,1) {$\cdot$};
		\node(C) at (0,1) {$\cdot$};
		\node(E) at (1.5,1) {$\cdot$};
		\node(N) at (0,2) {$\cdot$};
		\node(3) at (0.6,1.2) {$\cdot$};
		
		\draw (0,0) -- (-1.5,1) ;
		\draw (0,0) -- (0,1) ;
		\draw (0,0) -- (1.5,1) ;
		
		\draw (0,0) -- (0.6,1.2) ;
		
%		\draw plot [smooth, tension=1.5] coordinates {(0,0) (-0.5,1) (0,2)};
		\draw (-1.5,1) -- (0,2) ;
		\draw (0,1) -- (0,2) ;
		\draw (1.5,1) -- (0,2) ;
%		\draw (0.6,1.2) -- (0,2) ;
	\end{tikzpicture}
	&
	\begin{tikzpicture}[scale=0.4,baseline=0.2mm]
		\node(S) at (0,0) {$\cdot$};
		\node(W) at (-1.5,1) {$\cdot$};
		\node(C) at (0,1) {$\cdot$};
		\node(E) at (1.5,1) {$\cdot$};
		\node(N) at (0,2) {$\cdot$};
		\node(3) at (0.6,1.2) {$\cdot$};
		
		\draw (0,0) -- (-1.5,1) ;
		\draw (0,0) -- (0,1) ;
		\draw (0,0) -- (1.5,1) ;
		
		\draw (0,0) -- (0.6,1.2) ;
		
		\draw plot [smooth, tension=1.5] coordinates {(0,0) (-0.5,1) (0,2)};
		\draw (-1.5,1) -- (0,2) ;
		\draw (0,1) -- (0,2) ;
		\draw (1.5,1) -- (0,2) ;
%		\draw (0.6,1.2) -- (0,2) ;
	\end{tikzpicture}
	\\
	\hline
	\end{array}
	\]
Strictly speaking, each dot $\cdot$ above represents a relation $H \to H$, and
	\[
	R_0 = \{ (H,H) \, | \, H \subset \Sigma_3 \} \cup \{ (C_2 , \Sigma_3) \}.
	\]
This distinction is irrelevant because $\la R_0 \ra = \la ( C_2 , \Sigma_3 ) \ra$. We produced Figures \ref{fig:Cpn}--\ref{fig:S3} by performing calculations like these ad nauseum, and then analyzing the results.
\end{ex}

There are a few things we can say about the transfer system $\la R \ra$ on general grounds. To start, Theorem \ref{thm:transysgen} implies the following rough bounds. We call a relation $K \to H$ \emph{nontrivial} if $K \neq H$.

\begin{prop}\label{prop:boundtransysgen}Let $R$ be a binary relation on $\b{Sub}(G)$ that refines inclusion, and let $N \subset G$ be a normal subgroup.
	\begin{enumerate}
		\item{}Suppose that for every relation $KRH$, we have $H \subset N$. Then $H \subset N$ for every nontrivial relation $(K,H) \in \la R \ra$.
		\item{}Suppose that for every relation $KRH$, we have $N \subset K$. Then $H \not\subset N$ for every nontrivial relation $(K,H) \in \la R \ra$.
	\end{enumerate}
\end{prop}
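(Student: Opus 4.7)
The plan is to trace an invariant through the four stages $R_0, R_1, R_2, R_3$ of Construction \ref{constr:transysgen}, showing it is preserved at each step. Since $\la R \ra = R_3$ by Theorem \ref{thm:transysgen}, it suffices to establish the desired property for nontrivial relations in $R_3$.

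For part (1), I would prove by induction on stages that every relation $(A,B) \in R_i$ satisfies $B \subset N$, for $i = 0, 1, 2$. The base case $i = 0$ is the hypothesis. For $R_1$, normality of $N$ gives $gHg^{-1} \subset gNg^{-1} = N$ whenever $H \subset N$. For $R_2$, if $(K,H) \in R_1$ with $H \subset N$ and $L \subset H$, then the target $L$ of the new relation $(L \cap K, L)$ is contained in $N$. For $R_3$, any nontrivial relation $(K,H)$ is witnessed by a chain $K = H_0 R_2 H_1 R_2 \cdots R_2 H_n = H$ with $n \geq 1$, and the $R_2$-invariant applied to the final step $H_{n-1} R_2 H_n$ gives $H \subset N$.

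For part (2), the naturally dual invariant ``$N \subset K$'' is preserved through $R_1$ (since $N = gNg^{-1} \subset gKg^{-1}$ by normality), but \emph{not} through the restriction step of $R_2$: the new source $L \cap K$ need not contain $N$. The key observation is that this can be repaired by tracking the slightly weaker invariant on $R_2$: every $(A,B) \in R_2$ is either trivial ($A = B$) or satisfies $B \not\subset N$. Indeed, given $(L \cap K, L) \in R_2$ with $N \subset K$ and $L \subset H$, either $L \subset K$ and the relation is trivial, or there exists $\ell \in L \setminus K$; since $N \subset K$ forces $\ell \notin N$, the latter case gives $L \not\subset N$. The $R_3$-step is then handled by choosing the largest index $i_0$ in a witnessing chain with $H_{i_0} \neq H_{i_0+1}$; all subsequent steps are trivial, so $H_{i_0+1} = H_n = H$, and the $R_2$-invariant applied to the step $H_{i_0} R_2 H_{i_0+1}$ yields $H \not\subset N$.

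The argument amounts to careful bookkeeping once the correct invariants are identified, and no deep obstacle appears. The only subtlety worth flagging is that in part (2), naive preservation of source-containment fails under restriction, which forces the shift to the trivial-or-target-escapes-$N$ formulation described above.
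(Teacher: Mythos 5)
Your proof is correct and follows essentially the same route as the paper: track an invariant through the stages $R_0 \subset R_1 \subset R_2 \subset R_3$ of Construction \ref{constr:transysgen}, with the key point in part (2) being that (under the hypothesis $N \subset K$ on generators) every $R_2$-relation with target inside $N$ is forced to be trivial. The only cosmetic difference is that in part (2) the paper argues the contrapositive — if $H \subset N$ then, since $R_2$ refines inclusion, every subgroup in a witnessing chain lies in $N$, so all steps collapse and $K = H$ — whereas you locate the last nontrivial step of the chain and apply the same $R_2$-invariant directly; the content is identical.
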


\begin{proof}We start with $(1)$. Assume that $K R_0 H$ implies $H \subset N$. Then $K R_1 H$ implies $H \subset N$, because $N$ is normal, and $K R_2 H$ implies $H \subset N$ from the transitivity of $\subset$. Finally, if $(K,H) \in R_3$ is nontrivial, then there is a chain $K = H_0 R_2 H_1 R_2 \cdots R_2 H_n = H$ with $n > 0$, and $H_{n-1} R_2 H_n$ implies $H = H_n \subset N$.

Now consider $(2)$. Assume that $K R_0 H$ implies $N \subset K$. Then $K R_1 H$ implies $N \subset K$ because $N$ is normal. Now suppose that $K R_2 H$. We shall prove that if $H \subset N$, then $K = H$. For in this case, there are subgroups $K' \subset H' \supset L'$ such that $K' R_1 H'$ and $(K,H) = (L' \cap K' , L')$. If $H \subset N$, then $L' = H \subset N \subset K'$ and therefore $K = L' \cap K' = L' = H$. Finally, we prove that for every $(K,H) \in R_3$, if $H \subset N$, then $K = H$. For suppose $K = H_0 R_2 H_1 R_2 \cdots R_2 H_n = H \subset N$ for $n \geq 0$. If $n = 0$, there is nothing to check. If $n > 0$, then since $R_2$ refines inclusion, we have $H_{i+1} \subset N$ and $H_i R_2 H_{i+1}$ for every $0 \leq i < n$. It follows from the above that $K = H_0 = H_1 = \cdots = H_n = H$.
\end{proof}

We now identify $\la R \ra$ in a two simple cases. We assume that all relations in $R$ have a shared, normal source or a shared, normal target.

\begin{prop}\label{prop:multitsKnorm} Suppose $G$ is a finite group, $K \subset G$ is a normal subgroup, and $K \subset H_1,\dots,H_n \subset G$ are subgroups such that the set $\{H_1,\dots,H_n\}$ is closed under conjugation by elements of $G$. Then $\la (K,H_i) \, | \, 1 \leq i \leq n \ra$ is equal to the relation
	\[
	\to \,\, = \{ (M,M) \, | \, M \subset G \} \cup \bigcup_{i=1}^n \{ (M \cap K , M) \, | \, M \subset H_i \}.
	\]
\end{prop}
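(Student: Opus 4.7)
My proof would proceed by double containment, verifying that the proposed set $\to$ is itself a transfer system that sits between the generators and $\langle R \rangle$.

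First, for the easy direction $\langle R \rangle \subset \,\, \to$, I would show $\to$ is a $G$-transfer system containing each generator $(K, H_i)$, and then invoke the minimality of $\langle R \rangle$. Containment of the generators is immediate by taking $M = H_i$, since $K \subset H_i$ forces $H_i \cap K = K$. Then I would verify the axioms from Definition \ref{defn:transys}. Reflexivity and the refinement of the subset relation are built into the definition of $\to$. For conjugation closure, if $(M \cap K, M) \in \,\, \to$ with $M \subset H_i$, then for any $g \in G$,
\[
(g(M \cap K)g^{-1}, gMg^{-1}) = (gMg^{-1} \cap K, gMg^{-1}),
\]
since $K$ is normal, and $gMg^{-1} \subset gH_ig^{-1} = H_j$ for some $j$ by our assumption on $\{H_1,\dots,H_n\}$. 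For restriction closure, given $(M \cap K, M) \in \,\, \to$ with $M \subset H_i$ and $L \subset M$, note $L \cap (M \cap K) = L \cap K$ and $L \subset H_i$, so $(L \cap K, L) \in \,\, \to$.

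The delicate axiom is transitivity. Suppose $(A,B)$ and $(B,C)$ are both in $\to$. The case where one is diagonal is trivial, so assume $(A,B) = (B \cap K, B)$ with $B \subset H_i$ and $(B,C) = (C \cap K, C)$ with $C \subset H_j$. Then $B = C \cap K \subset K$, so $A = B \cap K = B$, and hence $(A,C) = (B,C) \in \,\, \to$. Antisymmetry then follows from the refinement of $\subset$. This establishes that $\to$ is a transfer system containing all generators, so $\langle R \rangle \subset \,\, \to$.

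For the reverse containment $\to \,\, \subset \langle R \rangle$, the diagonal lies in $\langle R \rangle$ by reflexivity, and each $(M \cap K, M)$ with $M \subset H_i$ is obtained by restricting the generator $(K, H_i)$ along $M \subset H_i$, hence lies in $\langle R \rangle$ by closure under restriction. Combining both containments gives the claim. The main obstacle is the transitivity verification, but the collapse $A = B$ in the nontrivial case makes it work cleanly; without normality of $K$ or conjugation-closure of $\{H_i\}$, the conjugation step would fail.
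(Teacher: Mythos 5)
Your proof is correct, but it takes a genuinely different route from the paper. The paper runs the explicit machinery of Construction \ref{constr:transysgen}: it observes $R = R_0 = R_1$, computes $R_2 = \bigcup_i \{(L \cap K, L) \mid L \subset H_i\}$, and then shows that any $R_2$-chain $L_0 \, R_2 \, L_1 \, R_2 \cdots R_2 \, L_m$ collapses, since $L_{j-1} = L_j \cap K$ forces $L_0 = L_m \cap K$; this gives $\la R \ra = R_3 \leq \, \to$, with the reverse refinement obtained by restricting the generators exactly as you do. You instead verify directly that the candidate relation $\to$ is a $G$-transfer system containing the generators and appeal to the minimality characterization of $\la R \ra$ (which the paper does make available, independently of Construction \ref{constr:transysgen}). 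The content is the same phenomenon seen from two sides: your transitivity check, where $B = C \cap K \subset K$ forces $A = B$, is precisely the paper's chain collapse in the case $m=2$, and your conjugation check is where normality of $K$ and conjugation-closure of $\{H_i\}$ enter, which in the paper is hidden in the equality $R_1 = R_0$. Your approach has the merit of isolating exactly which hypotheses feed which axiom; the paper's has the merit of exercising the general generation procedure it has just set up (and which it reuses for Proposition \ref{prop:indGorbab}). Two trivial omissions worth noting: in the restriction and transitivity checks you should also dispatch the diagonal cases $(M,M)$, though both are immediate.
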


\begin{proof} Let $R = \{ (K,H_i) \, | \, 1 \leq i \leq n \}$ and keep notation as in Construction \ref{constr:transysgen}. Then $R = R_0 = R_1$, and $R_2 = \bigcup_{i=1}^n \{ (L \cap K , L) \, | \, L \subset H_i \}$.

Suppose that $L R_3 M$. Then either $L = M$, or there is a chain of relations $L = L_0 R_2 L_1 R_2 \cdots R_2 L_m = M$ for some $m > 0$. The relation $L \to M$ is trivial in the former case, so assume the latter is true. Then for each $1 \leq j \leq m$, we have $L_{j-1} = L_j \cap K$ and $L_j \subset H_{i_j}$ for some $1 \leq i_j \leq n$. Therefore $L_0 = L_1 \cap K = L_2 \cap K = \cdots = L_m \cap K$, so that $(L,M) = (M \cap K,M)$ and $M \subset H_i$ for some $1 \leq i \leq n$. Therefore $R_3$ refines $\to$.

Conversely, suppose $L \to M$ and write $\rightsquigarrow \,\, = \la R \ra = R_3$. If $L = M$, then $L \rightsquigarrow M$ by reflexivity. Now suppose $L = M \cap K$, where $M \subset H_i$ for some $i$. The relation $K \rightsquigarrow H_i$ holds by definition, and hence $L \rightsquigarrow M$ holds by restriction. Therefore $\to$ refines $\rightsquigarrow \,\, = R_3$.
\end{proof}

If the set $\{H_1,\dots, H_n\}$ is not closed under conjugation, we close up and then apply Proposition \ref{prop:multitsKnorm}. This computes $\la (K,H) \ra$ for any normal subgroup $K \subset G$.

The next observation is useful in the dual computation, and in Proposition \ref{prop:discind}.

\begin{lem}\label{lem:transysint} Suppose $\to$ is a $G$-transfer system and $K_1, \dots, K_n \subset H \subset G$ are subgroups such that $K_i \to H$ for every $i=1,\dots,n$. Then $K_1 \cap \cdots \cap K_n \to H$.
\end{lem}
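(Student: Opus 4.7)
The plan is to induct on $n$, with the base case $n = 1$ being the hypothesis itself. The entire argument reduces to the case $n = 2$, since once we know intersections of two ``$\to$-small'' subgroups remain $\to$-small, we can peel off one factor at a time: assuming $K_1 \cap \cdots \cap K_{n-1} \to H$ by induction, we intersect with $K_n \to H$ to conclude $K_1 \cap \cdots \cap K_n \to H$.

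So the core step is $n = 2$. Suppose $K_1 \to H$ and $K_2 \to H$. First I would use closure under restriction (part (d) of Proposition \ref{prop:indsystransys}, which is built into Definition \ref{defn:transys}) applied to the relation $K_1 \to H$ along the inclusion $K_2 \subset H$. This yields $K_1 \cap K_2 \to K_2$. Then transitivity of $\to$ (a partial order, by Definition \ref{defn:transys}) applied to the chain $K_1 \cap K_2 \to K_2 \to H$ gives $K_1 \cap K_2 \to H$, as desired.

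There is no real obstacle; the content of the lemma is essentially that the two defining closure properties of a transfer system --- restriction and transitivity --- combine exactly to say that the set of subgroups $K \subset H$ with $K \to H$ is closed under finite intersections. The proof is a two-line deduction plus a trivial induction.
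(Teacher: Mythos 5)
Your proof is correct and uses exactly the same two ingredients as the paper --- closure under restriction followed by transitivity --- merely packaged as an induction on $n$ rather than as the explicit chain $\bigcap_{j=1}^n K_j \to \bigcap_{j=1}^{n-1} K_j \to \cdots \to K_1 \to H$ that the paper writes down. This is essentially the paper's argument.
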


\begin{proof} We have $K_1 \to H$, and for any $i = 1 , \dots , n-1$, restricting $K_{i+1} \to H$ along $\bigcap_{j=1}^i K_j \subset H$ gives $\bigcap_{j=1}^{i+1} K_j \to \bigcap_{j=1}^i K_j$. Therefore there is a chain $\bigcap_{j=1}^n K_j \to \bigcap_{j=1}^{n-1} \to \cdots \to K_1 \to H$, and $\bigcap_{j=1}^n K_j \to H$ follows by transitivity.
\end{proof}

\begin{prop}\label{prop:indGorbab} Suppose $G$ is finite group, $H \subset G$ is a normal subgroup, and $K_1 , \dots, K_n \subset H$ are subgroups such that the set $\{K_1,\dots,K_n\}$ is closed under conjugation by elements of $G$. Then $\la (K_i,H) \, | \, 1 \leq i \leq n \ra$ is equal to the relation
	\[
	\to \,\, = \Bigg\{ (M,M) \, \Bigg| \, M \subset G \Bigg\} \cup \Bigg\{ (M \cap K_{i_1} \cap \cdots \cap K_{i_m} , M ) \, \Bigg| \, 
		\begin{array}{c}
			M \subset H \t{ and }	\\
			1 \leq i_1,\dots,i_m \leq n
		\end{array}
	\Bigg\} .
	\]
\end{prop}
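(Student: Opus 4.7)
The plan is to prove both inclusions $\to \,\,\subseteq \la R \ra$ and $\la R \ra \,\,\subseteq \,\,\to$, where $R = \{(K_i,H) \, | \, 1 \leq i \leq n\}$. For the forward inclusion, the reflexive pairs $(M,M)$ are in any transfer system. For a non-diagonal generator $(M \cap K_{i_1} \cap \cdots \cap K_{i_m}, M)$ of $\to$ with $M \subset H$, I would start from the relations $K_{i_j} \to H$ in $\la R \ra$, apply Lemma \ref{lem:transysint} to obtain $K_{i_1} \cap \cdots \cap K_{i_m} \to H$, and then restrict along $M \subset H$ to obtain $M \cap K_{i_1} \cap \cdots \cap K_{i_m} \to M$.

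For the reverse inclusion, I would verify that $\to$ is itself a $G$-transfer system containing $R$; since $\la R \ra$ is the smallest such, this yields $\la R \ra \,\,\subseteq \,\,\to$. The generators sit in $\to$ by taking $M = H$ and $m = 1$ (so $H \cap K_i = K_i$ since $K_i \subset H$). The relation $\to$ refines the subset relation by construction, is reflexive by construction, and is therefore antisymmetric.

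The closure properties exploit the two hypotheses on $H$ and the family $\{K_i\}$. For closure under conjugation, given a non-diagonal $(L,M) \in \,\,\to$ with $L = M \cap K_{i_1} \cap \cdots \cap K_{i_m}$ and $g \in G$, one computes $gLg^{-1} = gMg^{-1} \cap gK_{i_1}g^{-1} \cap \cdots \cap gK_{i_m}g^{-1}$; each conjugate $gK_{i_j}g^{-1}$ equals some $K_{i'_j}$ by $G$-invariance of the family, while $gMg^{-1} \subset gHg^{-1} = H$ by normality of $H$. Closure under restriction along $N \subset M$ is analogous: $L \cap N = N \cap K_{i_1} \cap \cdots \cap K_{i_m}$ with $N \subset M \subset H$.

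The main (and only nontrivial) computational step is transitivity. Given non-diagonal $(L,M), (M,N) \in \,\,\to$, we have $L = M \cap K_{i_1} \cap \cdots \cap K_{i_m}$ with $M \subset H$ and $M = N \cap K_{j_1} \cap \cdots \cap K_{j_k}$ with $N \subset H$. Substituting the second expression for $M$ into the first gives
\[
L \,\,=\,\, N \cap K_{j_1} \cap \cdots \cap K_{j_k} \cap K_{i_1} \cap \cdots \cap K_{i_m} ,
\]
which exhibits $(L,N)$ as an element of $\to$. The remaining cases where one of the two relations is diagonal collapse trivially. The mild obstacle throughout is simply organizing the diagonal/non-diagonal case split cleanly; there is no deeper content, and no appeal is needed beyond Lemma \ref{lem:transysint} and the standard transfer-system axioms.
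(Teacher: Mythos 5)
Your proof is correct, but for the containment $\la R \ra \subseteq \,\, \to$ you take a genuinely different route from the paper. The paper computes $\la R \ra$ via Construction \ref{constr:transysgen}: it notes $R = R_0 = R_1$ (precisely because $H$ is normal and $\{K_1,\dots,K_n\}$ is conjugation-closed), writes out $R_2 = \bigcup_i \{(L \cap K_i, L) \mid L \subset H\}$, and then shows by tracking an arbitrary chain $L_0 \, R_2 \, L_1 \, R_2 \cdots R_2 \, L_m$ that every nontrivial pair in $R_3$ has the form $(M \cap K_{i_1} \cap \cdots \cap K_{i_m}, M)$ with $M \subset H$. You instead verify directly that the displayed relation $\to$ is itself a $G$-transfer system containing the generators $(K_i,H)$, and conclude by minimality of $\la R \ra$ (Theorem \ref{thm:transysgen}); your checks of conjugation-closure (using normality of $H$ and $G$-invariance of the family), restriction-closure, and transitivity-by-substitution are all sound, and the diagonal cases collapse as you say. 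The paper's approach buys economy: the closure axioms were already verified once and for all in Theorem \ref{thm:transysgen}, so only the shape of $R_2$-chains needs analysis; your approach avoids the chain bookkeeping at the cost of re-verifying the axioms, and it makes visible exactly where the two hypotheses on $H$ and $\{K_i\}$ are used. The other containment, $\to \, \subseteq \la R \ra$ via Lemma \ref{lem:transysint} followed by restriction along $M \subset H$, is identical to the paper's argument.
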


\begin{proof} Let $R = \{ (K_i,H) \, | \, 1 \leq i \leq n\}$ and keep notation as in Construction \ref{constr:transysgen}. Then $R = R_0 = R_1$, and $R_2 = \bigcup_{i=1}^n \{ (L \cap K_i , L) \, | \, L \subset H \}$.

Suppose that $L R_3 M$. As in Proposition \ref{prop:multitsKnorm}, we may assume there is a chain of relations $L = L_0 R_2 L_1 R_2 \cdots R_2 L_m = M$ for some $m > 0$. For every $1 \leq j \leq m$, we have $L_{j-1} = L_j \cap K_{i_j}$ for some $1 \leq i_j \leq n$ and $L_j \subset H$. Therefore $L_0 = L_1 \cap K_{i_1} = L_2 \cap K_{i_1} \cap K_{i_2} = \cdots = L_m \cap \bigcap_{j=1}^m K_{i_j}$, so that $(L,M) = (M \cap \bigcap_{j=1}^m K_{i_j} , M)$ for some $M \subset H$. Therefore $R_3$ refines $\to$.

Conversely, suppose $L \to M$ and write $\rightsquigarrow \,\, = \la R \ra = R_3$. The relation $L \rightsquigarrow M$ is trivial if $L=M$, so assume $L = M \cap K_{i_1} \cap \cdots \cap K_{i_m}$ for some $M \subset H$ and indices $1 \leq i_1, \dots , i_m \leq n$. The relation $K_{i_j} \rightsquigarrow H$ holds for all $j$ by definition, hence $\bigcap_{j=1}^m K_{i_j} \rightsquigarrow H$ by Lemma \ref{lem:transysint}, and hence $L = M \cap \bigcap_{j=1}^m K_{i_j} \rightsquigarrow M$ by restriction. Therefore $\to$ refines $\rightsquigarrow \,\ = R_3$.
\end{proof}

If the set $\{K_1,\dots,K_n\}$ is not closed under conjugation, we close up and then apply Proposition \ref{prop:indGorbab}. This computes $\la (K,H) \ra$ for any normal subgroup $H \subset G$.

If $K,H \subset G$ are both normal, then Propositions \ref{prop:multitsKnorm} and \ref{prop:indGorbab} have the following common specialization.

\begin{cor}\label{cor:indgen1orb} Suppose $G$ is a finite group, $H,K \subset G$ are normal subgroups of $G$, and $K \subset H$. Then $\la (K,H) \ra = \{ (M,M) \, | \, M \subset G \} \cup \{ (M \cap K,M) \, | \, M \subset H \}$.
\end{cor}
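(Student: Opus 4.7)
The strategy is to observe that this corollary is precisely the $n=1$ case of either Proposition \ref{prop:multitsKnorm} or Proposition \ref{prop:indGorbab}. The normality hypotheses on both $K$ and $H$ are strictly stronger than needed for either proposition individually, so we have two equivalent routes.

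First, I would check that the hypotheses of Proposition \ref{prop:multitsKnorm} apply. Take $n=1$ and $H_1 = H$. Since $H$ is normal in $G$, the singleton $\{H\}$ is conjugation-closed. The hypothesis $K \subset H$ is given. Applying Proposition \ref{prop:multitsKnorm} then yields
\[
\la (K,H) \ra = \{(M,M) \, | \, M \subset G\} \cup \{(M \cap K, M) \, | \, M \subset H\},
\]
which is exactly the claim.

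Alternatively, one could apply Proposition \ref{prop:indGorbab} with $n=1$ and $K_1 = K$: since $K$ is normal, $\{K\}$ is conjugation-closed, and the indexed intersection $M \cap K_{i_1} \cap \cdots \cap K_{i_m}$ collapses to $M \cap K$ since there is only one $K_i$. This gives the same conclusion.

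There is no real obstacle — the corollary is a direct specialization, and the only thing to verify is that the conjugation-closure hypothesis of the cited propositions is automatic in the presence of normality. I would simply state both derivations (or choose one and remark that the other works dually) in a single short paragraph.
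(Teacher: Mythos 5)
Your proposal is correct and matches the paper exactly: the paper offers no separate argument, introducing the corollary with the remark that it is the common specialization of Propositions \ref{prop:multitsKnorm} and \ref{prop:indGorbab} when both $K$ and $H$ are normal, which is precisely your two-route derivation. The only verification needed — that normality makes the singleton conjugation-closure hypotheses automatic — is the same observation you make.
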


The transfer system $\to \,\, = \la (K,H) \ra$ can be quite complicated when neither $K$ nor $H$ is normal in $G$, but we can say the following for certain. Recall that the normal closure of $H$ is the join of all conjugates of $H$ in $G$, and dually, the normal core of $K$ is the intersection of all conjugates of $K$ in $G$. Proposition \ref{prop:boundtransysgen} bounds $\la (K,H) \ra$ above and below by these subgroups. Additionally, Lemma \ref{lem:transysint} implies that for any $g_1 , \dots, g_n \in NH$, we have $\bigcap_{i=1}^n g_i K g_i^{-1} \to H$.

We conclude by recasting Construction \ref{constr:transysgen} in terms of indexing systems and indexing categories. We start with indexing systems. Suppose that $\b{O}$ is a set of orbits $H/K$, for some subgroups $H \subset G$. Define the graph $\to_{\b{O}}$ of $\b{O}$ exactly as in Definition \ref{defn:graphindsys}:
	\[
	K \to_{\b{O}} H \quad\t{if and only if}\quad K \subset H \t{ and } H/K \in \b{O}.
	\]
Thus $\to_{\b{O}}$ is a binary relation on $\b{Sub}(G)$ that refines inclusion, and the transfer system $\la \to_{\b{O}} \ra$ is well-defined. Recall the isomorphism $\to_\bullet \, : \b{Ind}(G) \rightleftarrows \b{Tr}(G) : \c{I}_{\bullet}$ of Theorem \ref{thm:transys}.

\begin{prop}\label{thm:indgenO} Suppose that $\b{O}$ is a set of orbits. Then $\c{I}_{\la \to_{\b{O}} \ra}$ is the indexing system generated by $\b{O}$. Equivalently, $\to_{\la \b{O} \ra} = \la \to_{\b{O}} \ra$.
\end{prop}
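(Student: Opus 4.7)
The plan is to prove the equivalent second formulation $\to_{\la \b{O} \ra} \,=\, \la \to_{\b{O}} \ra$ directly using the order isomorphism $\to_\bullet : \b{Ind}(G) \rightleftarrows \b{Tr}(G) : \c{I}_\bullet$ established in Theorem \ref{thm:transys}. First I would observe that the indexing system $\la \b{O} \ra$ generated by $\b{O}$ is well-defined as the intersection of all indexing systems containing $\b{O}$; this intersection is again an indexing system because each axiom in Definition \ref{defn:indsys} is preserved under arbitrary levelwise intersection. Since $\b{Ind}$ is a lattice with minimum element $\ub{triv}$, this smallest indexing system containing $\b{O}$ exists and is unique.

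Next, I would show the two refinements separately. For $\la \to_{\b{O}} \ra \,\leq\, \to_{\la \b{O} \ra}$: the indexing system $\la \b{O} \ra$ contains every orbit in $\b{O}$, so if $H/K \in \b{O}$, then $H/K \in \la \b{O} \ra$ and hence $K \to_{\la \b{O} \ra} H$. Thus $\to_{\la \b{O} \ra}$ is a transfer system (by Proposition \ref{prop:indsystransys}) containing $\to_{\b{O}}$, and by minimality of $\la \to_{\b{O}} \ra$ we conclude $\la \to_{\b{O}} \ra \,\leq\, \to_{\la \b{O} \ra}$.

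For the reverse refinement $\to_{\la \b{O} \ra} \,\leq\, \la \to_{\b{O}} \ra$: I would consider the indexing system $\c{I}_{\la \to_{\b{O}} \ra}$ constructed from the generated transfer system via Proposition \ref{prop:transysindsys}. Since $\to_{\b{O}} \,\subset\, \la \to_{\b{O}} \ra$, every orbit $H/K \in \b{O}$ satisfies $K \to_{\la \to_{\b{O}} \ra} H$, and therefore $H/K \in \c{I}_{\la \to_{\b{O}} \ra}$ by the explicit description of this indexing system. Hence $\b{O} \subset \c{I}_{\la \to_{\b{O}} \ra}$, so $\la \b{O} \ra \subset \c{I}_{\la \to_{\b{O}} \ra}$ by minimality of $\la \b{O} \ra$. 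Applying the order-preserving map $\to_\bullet$ and using $\to_{\c{I}_{\la \to_{\b{O}} \ra}} \,=\, \la \to_{\b{O}} \ra$ from Theorem \ref{thm:transys}, I conclude $\to_{\la \b{O} \ra} \,\leq\, \la \to_{\b{O}} \ra$.

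Finally, combining the two refinements gives the desired equality, and applying $\c{I}_\bullet$ yields the first formulation $\c{I}_{\la \to_{\b{O}} \ra} = \la \b{O} \ra$. There is no real obstacle here; the argument is entirely formal once Theorem \ref{thm:transys} is in hand. The only minor point worth verifying is the closure of indexing systems under intersection, which is immediate from Definition \ref{defn:indsys}.
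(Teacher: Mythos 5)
Your proof is correct and rests on exactly the same technical lever as the paper's: the order isomorphism $\to_\bullet : \b{Ind} \rightleftarrows \b{Tr} : \c{I}_\bullet$ of Theorem \ref{thm:transys} together with the universal property of generated objects. The paper packages the argument as a single chain of biconditionals ($\b{O} \subset \c{I} \iff \la \to_{\b{O}} \ra$ refines $\to_{\c{I}} \iff \c{I}_{\la \to_{\b{O}} \ra} \subset \c{I}$) and then specializes $\c{I}$, whereas you unwind it into two one-sided refinements; these are the same argument in different clothing.
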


\begin{proof}For any indexing system $\c{I}$, we have:
	\[
	\b{O} \subset \c{I}
	\,\, \iff \,\,
	\t{$\to_{ \b{O}}$ refines $\to_{ \c{I}}$}
	\,\, \iff \,\,
	\t{$\la \to_{ \b{O}} \ra$ refines $\to_{ \c{I}}$}
	\,\, \iff \,\,
	\t{$\c{I}_{\la \to_{ \b{O}} \ra} \subset \c{I}$.}
	\]
Taking $\c{I} = \c{I}_{\la \to_{ \b{O}} \ra}$ proves that $\b{O}$ is contained in the indexing system  $\c{I}_{\la \to_{ \b{O}} \ra}$, and the equivalences above prove that $\c{I}_{\la \to_{ \b{O}} \ra}$ is the least such indexing system.
\end{proof}

\begin{cor}\label{cor:indgenO}Suppose that $\b{O}$ is a set of orbits, and let $\la \b{O} \ra$ be the indexing system that it generates. Then $H/K \in \la \b{O} \ra$ if and only if $(K,H) \in \la \to_{ \b{O}} \ra$. 
\end{cor}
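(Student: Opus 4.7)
The plan is to derive this corollary immediately from Proposition \ref{thm:indgenO} together with the orbit description of the functor $\c{I}_{\bullet}$ from Proposition \ref{prop:transysindsys}. There is essentially no calculation to do; the content of the statement is purely definitional once Proposition \ref{thm:indgenO} is in hand.

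First I would invoke Proposition \ref{thm:indgenO} to rewrite the indexing system $\la \b{O} \ra$ as $\c{I}_{\la \to_{\b{O}} \ra}$. Next I would recall the explicit formula for $\c{I}_{\bullet}$ given in Proposition \ref{prop:transysindsys}: for any $G$-transfer system $\to$, the class $\c{I}_{\to}(H)$ consists of all finite coproducts of orbits $H/K$ with $K \to H$. Specializing to a single orbit, this gives the equivalence
\[
H/K \in \c{I}_{\to} \quad\iff\quad K \to H,
\]
because the subgroup $K$ is recovered up to $H$-conjugacy from the orbit $H/K$, and a transfer relation $K' \to H$ with $K' = hKh^{-1}$ is equivalent to $K \to H$ by conjugation-closure of $\to$.

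Applying this to $\to \,\, = \,\, \la \to_{\b{O}} \ra$ gives the desired biconditional
\[
H/K \in \la \b{O} \ra = \c{I}_{\la \to_{\b{O}} \ra} \quad\iff\quad K \la \to_{\b{O}} \ra H,
\]
which is the statement of the corollary. There is no real obstacle here — the only subtle point is confirming that the passage from orbits to pairs $(K,H)$ is well-defined up to conjugacy, but this is already built into the transfer system axioms and was checked in the proof of Proposition \ref{prop:transysindsys}.
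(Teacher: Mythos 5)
Your proof is correct and follows essentially the same (immediate) route the paper intends: the corollary is just Proposition \ref{thm:indgenO} combined with the dictionary between transfer systems and indexing systems, where $H/K \in \c{I}_{\to}$ if and only if $K \to H$, the conjugacy point being handled exactly as in the proof of Proposition \ref{prop:transysindsys}. No gaps.
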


Now for indexing categories. Let $\s{O}_G^{\pi}$ be the wide subcategory of $\s{O}_G$ that consists of all projection maps of the form $\pi(gK) = gH : G/K \to G/H$, for some subgroups $K \subset H \subset G$. Suppose $\s{G} \subset \s{O}_G^{\pi}$ is a wide subgraph, by which we mean a sub-directed graph of $\s{O}_G^{\pi}$ that contains all objects of $\s{O}_G^{\pi}$. We define a relation $\to_{\s{G}}$ on $\b{Sub}(G)$ by
	\[
	K \to_{\s{G}} H \quad\t{if and only if}\quad K \subset H \t{ and } (\pi : G/K \to G/H) \in \s{G}.
	\]
Thus, $\to_{\s{G}}$ is a binary relation on $\b{Sub}(G)$ that refines inclusion, and the transfer system $\la \to_{\s{G}} \ra$ is well-defined. Recall the isomorphism $\b{Set}^G_\bullet : \b{Tr}(G) \rightleftarrows \b{IndCat}(G) : \, \to_\bullet$ of Corollary \ref{cor:transysBH2}. The next result is proven the same way as Proposition \ref{thm:indgenO}.

\begin{prop}\label{prop:indgenG} Suppose $\s{G}$ is a wide subgraph of $\s{O}_G^{\pi}$. Then $\b{Set}^G_{\la \to_{\s{G}} \ra}$ is the indexing category generated by $\s{G}$. Equivalently, $\to_{\la \s{G} \ra} = \la \to_{\s{G}} \ra$.
\end{prop}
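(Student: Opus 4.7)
The plan is to mimic the three-step chain of equivalences used in the proof of Proposition~\ref{thm:indgenO}, with orbits replaced by projection morphisms in $\s{O}_G^{\pi}$. Fix a wide subgraph $\s{G} \subset \s{O}_G^{\pi}$. The goal is to exhibit $\b{Set}^G_{\la \to_{\s{G}} \ra}$ as the smallest indexing category containing $\s{G}$, which means proving two things: (i) $\s{G} \subset \b{Set}^G_{\la \to_{\s{G}} \ra}$, and (ii) every indexing category $\s{D} \in \b{IndCat}(G)$ with $\s{G} \subset \s{D}$ satisfies $\b{Set}^G_{\la \to_{\s{G}} \ra} \subset \s{D}$. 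Both will fall out of a single biconditional chain.

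The first step is to observe that for any indexing category $\s{D}$,
\[
\s{G} \subset \s{D} \quad\iff\quad \to_{\s{G}}\,\,\t{refines}\,\,\to_{\s{D}} .
\]
This is essentially a tautology: by construction, every morphism in $\s{G}$ is a projection $\pi : G/K \to G/H$, and by the formula from Corollary~\ref{cor:transysBH2}, such a projection lies in $\s{D}$ if and only if $K \to_{\s{D}} H$. Since the map $\s{G} \mapsto \, \to_{\s{G}}$ simply repackages the projection morphisms of $\s{G}$ as a relation on $\b{Sub}(G)$, the two containments record the same data.

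The second step uses Theorem~\ref{thm:transysgen}: since $\to_{\s{D}}$ is a transfer system containing the relation $\to_{\s{G}}$, the minimality of $\la \to_{\s{G}} \ra$ gives $\to_{\s{G}}\,\,\t{refines}\,\,\to_{\s{D}}$ if and only if $\la\to_{\s{G}}\ra\,\,\t{refines}\,\,\to_{\s{D}}$. Finally, Corollary~\ref{cor:transysBH2} identifies $\b{IndCat}(G) \cong \b{Tr}(G)$ via the inverse maps $\b{Set}^G_\bullet$ and $\to_\bullet$, so the latter refinement is equivalent to $\b{Set}^G_{\la \to_{\s{G}} \ra} \subset \b{Set}^G_{\to_{\s{D}}} = \s{D}$. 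Chaining these three equivalences yields
\[
\s{G} \subset \s{D} \quad\iff\quad \b{Set}^G_{\la \to_{\s{G}} \ra} \subset \s{D} ,
\]
and specializing to $\s{D} = \b{Set}^G_{\la \to_{\s{G}} \ra}$ proves (i), while the general statement proves (ii). The equivalent reformulation $\to_{\la \s{G} \ra} \, = \, \la \to_{\s{G}} \ra$ then follows by applying $\to_\bullet$ to both sides.

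The only real content is the first equivalence, and even there the main point is conceptual rather than technical: one must be comfortable that $\s{O}_G^\pi$ has a unique projection $G/K \to G/H$ whenever $K \subset H$, so that a wide subgraph of $\s{O}_G^{\pi}$ is literally the same data as a binary relation on $\b{Sub}(G)$ refining inclusion. No genuine obstacle arises; the proof is essentially a translation of the argument for Proposition~\ref{thm:indgenO} across the isomorphism $\b{Ind} \cong \b{IndCat}$ of Theorem~\ref{thm:BH2}.
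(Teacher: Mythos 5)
Your proof is correct and matches the paper's intent exactly: the paper simply states that the result "is proven the same way as Proposition~\ref{thm:indgenO}," and you have faithfully reproduced that chain of three equivalences, transported through the isomorphism $\b{Set}^G_\bullet : \b{Tr} \rightleftarrows \b{IndCat} : {}\to_\bullet$ of Corollary~\ref{cor:transysBH2} and the minimality property of $\la - \ra$ from Theorem~\ref{thm:transysgen}. Nothing is missing and the route is identical.
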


\begin{cor} Suppose $\s{G}$ is a wide subgraph of $\s{O}_G^{\pi}$ and let $\la \s{G} \ra$ be the indexing category that it generates. Then a morphism $f : S \to T$ is in $\la \s{G} \ra$ if and only if $(G_s , G_{f(s)}) \in \la \to_{\s{G}} \ra$ for every $s \in S$.
\end{cor}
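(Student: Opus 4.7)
The plan is to observe that this corollary follows immediately by combining Proposition \ref{prop:indgenG} with the explicit description of the functor $\b{Set}^G_{\bullet} : \b{Tr}(G) \to \b{IndCat}(G)$ given in the paragraph preceding Corollary \ref{cor:transysBH2}. There is no real content to prove; it is a translation of Proposition \ref{prop:indgenG} from the language of transfer systems back to the language of morphisms of $G$-sets.

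First, I would apply Proposition \ref{prop:indgenG} to identify the indexing category generated by $\s{G}$ as
\[
\la \s{G} \ra = \b{Set}^G_{\la \to_{\s{G}} \ra}.
\]
Next, I would recall the definition of $\b{Set}^G_{\rightsquigarrow}$ from the discussion preceding Corollary \ref{cor:transysBH2}: for any transfer system $\rightsquigarrow \, \in \b{Tr}(G)$, the indexing category $\b{Set}^G_{\rightsquigarrow}$ consists of precisely those $f : S \to T$ in $\b{Set}^G_{fin}$ such that $G_s \rightsquigarrow G_{f(s)}$ for every $s \in S$. Specializing to $\rightsquigarrow \, = \la \to_{\s{G}} \ra$ yields the stated characterization: $f \in \la \s{G} \ra$ if and only if $(G_s, G_{f(s)}) \in \la \to_{\s{G}} \ra$ for every $s \in S$.

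There is no substantive obstacle here, because all the work has been absorbed into Proposition \ref{prop:indgenG} and the isomorphism $\b{Set}^G_{\bullet} : \b{Tr}(G) \rightleftarrows \b{IndCat}(G) : \to_{\bullet}$ of Corollary \ref{cor:transysBH2}. The proof itself should be a single short paragraph invoking these two results.
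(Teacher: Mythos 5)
Your proposal is correct and matches the intent of the paper, which leaves this corollary unproved precisely because it is the immediate translation of Proposition \ref{prop:indgenG} through the defining formula for $\b{Set}^G_{\bullet}$ given just before Corollary \ref{cor:transysBH2}. Nothing to add.
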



\begin{thebibliography}{99}

\bibitem{BBR}
S. Balchin, D. Barnes, and C. Roitzheim.
$N_\infty$-operads and associahedra.
Preprint. arXiv:1905.03797.

\bibitem{BH}
A.J. Blumberg and M.A. Hill.
Operadic multiplications in equivariant spectra, norms, and transfers.
Adv. Math. 285 (2015), 658--708.

\bibitem{BH2}
A.J. Blumberg and M.A. Hill.
Incomplete Tambara functors.
Algebr. Geom. Topol. 18 (2018), no. 2, 723--766.

\bibitem{BonPer}
P. Bonventre and L.A. Pereira.
Genuine equivariant operads.
Preprint. arXiv:1707.02226.

\bibitem{GM}
B.J. Guillou and J.P. May.
Equivariant iterated loop space theory and permutative G-categories.
Algebr. Geom. Topol. 17 (2017), no. 6, 3259--3339.

\bibitem{GutWhite}
J.J. Guti\'{e}rrez and D. White.
Encoding equivariant commutativity via operads. 
Algebr. Geom. Topol. 18 (2018), no. 5, 2919--2962. 

\bibitem{MayGILS}
J.P. May.
The geometry of iterated loop spaces.
Lectures Notes in Mathematics, Vol. 271. Springer-Verlag, Berlin-New York, 1972. viii+175 pp.

\bibitem{Rubin}
J. Rubin.
Combinatorial $N_\infty$ operads.
Preprint. arXiv:1705.03585.

\end{thebibliography}
\end{document}